\theoremstyle{definition}
\newtheorem{theorem}{Theorem}
\newtheorem{corollary}[theorem]{Corollary}
\newtheorem{lemma}[theorem]{Lemma}
\newtheorem{proposition}[theorem]{Proposition}
\newtheorem{definition}[theorem]{Definition}
\newcommand{\R}{\mathbb{R}}
\newcommand{\dd}{~\mathrm{d}}
\newcommand{\supp}{\mathrm{supp}}
\newcommand{\dm}{d_{\mathrm{{m}}}}
\newcommand{\wpx}{\mathcal{W}_p(X,\dm)}
\newcommand{\wprt}{\mathcal{W}_p(\mathbb{R}^2,\dm)}
\newcommand{\wpq}{\mathcal{W}_p(Q,\dm)}
\newcommand{\wpl}{\mathcal{W}_p(L,\dm)}
\newcommand{\wort}{\mathcal{W}_1(\mathbb{R}^2,\dm)}
\newcommand{\woq}{\mathcal{W}_1(Q,\dm)}
\title[Isometric rigidity of $\wprt$ and $\wpq$ for $p\geq1$]{Isometric rigidity of the Wasserstein space over the plane with the maximum metric}
\author[Zolt\'an M. Balogh]{Zolt\'an M. Balogh}
\address{Zolt\'an M. Balogh, Universit\"at Bern\\ Mathematisches Institut (MAI)\\ Sidlerstrasse 12\\ 3012 Bern\\ Schweiz}
\email{zoltan.balogh@unibe.ch}
\author[Gergely Kiss]{Gergely Kiss}
\address{Gergely Kiss, Corvinus University of Budapest, Department of Mathematics \\
Fővám tér 13-15 \\ Budapest 1093 \\ Hungary\\ and HUN-REN Alfr\'ed R\'enyi Institute of Mathematics\\ Re\'altanoda u. 13-15.\\
Budapest 1053\\ Hungary}\email{kiss.gergely@renyi.hu}
\author[Tam\'as Titkos]{Tam\'as Titkos}
\address{Tam\'as Titkos, Corvinus University of Budapest, Department of Mathematics \\
Fővám tér 13-15 \\ Budapest 1093 \\ Hungary\\ and HUN-REN Alfr\'ed R\'enyi Institute of Mathematics\\ Re\'altanoda u. 13-15.\\
Budapest 1053\\ Hungary}
\email{titkos.tamas@renyi.hu}
\author[D\'aniel Virosztek]{D\'aniel Virosztek}
\address{D\'aniel Virosztek, HUN-REN Alfr\'ed R\'enyi Institute of Mathematics\\ Re\'altanoda u. 13-15.\\
Budapest H-1053\\ Hungary}
\email{virosztek.daniel@renyi.hu}
\subjclass[2020]{Primary: 54E40; 46E27. Secondary: 60B05.}
\keywords{optimal transport, isometry group, Wasserstein spaces, branching spaces, maximum norm}
\thanks{Z. M. Balogh is supported by the Swiss National Science Foundation, Grant Nr. {200021\_228012}. G. Kiss is supported by J\'anos Bolyai Research Fellowship of the Hungarian Academy of Sciences and by the Hungarian National Research, Development and Innovation Office - NKFIH grants no. K146922, FK142993, Starting 150576. T. Titkos is supported by the Hungarian National Research, Development and Innovation Office - NKFIH grant no. K134944), by the Momentum program of the Hungarian Academy of Sciences under grant agreement no. LP2021-15/2021., and by the C-PAP Scholarship of Corvinus University. D. Virosztek is supported by the Momentum program of the Hungarian Academy of Sciences under grant agreement nr. {LP2021-15/2021}, by the Hungarian National Research, Development and Innovation Office (NKFIH) under grant agreement nr. {Excellence\_151232}, and partially supported by the ERC Synergy Grant No. 810115.}
\begin{document}


\begin{abstract}
We study $p$-Wasserstein spaces over the branching spaces $\R^2$ and $[-1,1]^2$ equipped with the maximum norm metric. We show that these spaces are isometrically rigid for all $p\geq1,$ meaning that all isometries of these spaces are induced by isometries of the underlying space via the push-forward operation. This is in contrast to the case of the Euclidean metric since with that distance the $2$-Wasserstein space over $\R^2$ is not rigid. Also, we highlight that the $1$-Wasserstein space is not rigid over the closed interval $[-1,1]$, while according to our result, its two-dimensional analog, the closed unit ball $[-1,1]^2$ with the more complicated geodesic structure is rigid.
\end{abstract}
	\maketitle 
	\tableofcontents
\section{Introduction and the main result}
Recent developments of optimal mass transport theory \cite{AG,AGS,AP,Figalli-Glaudo,V,Villani} serve as main motivation for studying the Wasserstein space; that is, the space of probability measures endowed with a metric generated by optimal mass transport. The structure of the isometry group of Wasserstein spaces has been studied for the first time in a groundbreaking paper by Kloeckner 
\cite{K} in the case when the underlying space is the Euclidean space $\R^n$. This research has been followed up by Bertrand and Kloeckner \cite{BK,BK2}, Geh\'er, Titkos, Virosztek \cite{GTV1,GTV2}, Santos-Rodriguez \cite{S-R}. These authors considered various underlying metric spaces with different properties. The general feature of these spaces was, that they were non-branching geodesic metric spaces. This non-branching property of the underlying space was inherited by the Wasserstein space as well \cite{AG,S-R} and it was used in an essential way (e.g. in \cite{S-R}) to show that isometries of Wasserstein spaces preserve the class of Dirac masses.  

In this paper, we consider the situation of branching spaces, namely $\mathbb{R}^2$ and $Q=[-1,1]^2$ endowed with the maximum metric. Since the above-mentioned technique does not work in our case, we shall use a different method in order to determine the structure of the isometry group of the Wasserstein space over these spaces. 

Before defining the necessary notions and introducing the notation we will use throughout this paper, we highlight a very recent result of Che, Galaz-García, Kerin, and Santos-Rodríguez \cite{S-R2} which provided interesting examples of non-rigid Wasserstein spaces over certain classes of normed spaces. \\

To state our main result we start by introducing some notation.  Let  $X\subseteq\mathbb{R}^2$ be a closed subset equipped with the maximum metric $\dm:X\times X\to [0,\infty)$ $$\dm\big((x_1, x_2), (y_1,y_2)\big)=\max\big\{|x_1-y_1|,|x_2-y_2|\big\},$$ 
which is a complete and separable metric space. For $p\geq 1$ we consider the $p$-Wasserstein space $\big(\mathcal{P}_p(X,\dm),d_{W_p}\big)$, where  $X \subseteq \mathbb{R}^2$ is a closed subset and  $\mathcal{P}_p(X,\dm)$ is the space of Borel probability measures $\mu$ supported on  $X \subseteq \mathbb{R}^2$ with finite $p$-th moments:
  $$\int\limits_{X} \dm^p(x, x_0) ~\mathrm{d}\mu(x) < \infty $$
  for some (and thus for all) $x_0 \in \mathbb{R}^2$. This set is endowed with the Wasserstein metric coming from optimal mass transport, i.e. 
  $$d_{W_p}(\mu, \nu)=\min_{\pi\in C(\mu,\nu)}\left(\iint\limits_{X\times X} \dm^p(x,y) ~\mathrm{d}\pi(x,y)\right)^{\frac{1}{p}},$$
where $C(\mu, \nu)$ is in the set of couplings between $\mu$ and $\nu$. That is, $\pi\in \mathcal{P}(X\times X)$ and its marginals are equal to $\mu$ and $\nu$: $\pi(A\times X) = \mu(A)$, and $\pi(X\times A) = \nu(A)$ for any Borel set $A\subseteq X$. Recall that if $0<p<1$, then the definition of the $p$-Wasserstein distance is slightly different. In that case, $d_{W_p}(\mu, \nu)=\min_{\pi\in C(\mu,\nu)}\iint\limits_{X\times X} \dm^p(x,y) ~\mathrm{d}\pi(x,y)$.

For the sake of brevity, we will denote the Wasserstein space $\big(\mathcal{P}_p(X,\dm),d_{W_p}\big)$ by $\wpx$.

The support of a measure $\mu$ will be denoted by $\supp(\mu)$. For some distinguished collection of lines $L\subset\mathbb{R}^2$ the set $$\wpl=\{\mu\in \wprt\,|\, \supp(\mu)\subseteq L\}$$ will play an important role. Recall that a geodesic segment (or shortly: geodesic) is a curve $\gamma: [a,b] \to \wpx$ such that 
$$d_{W_p}(\gamma(t),\gamma(s)) = C|t-s|$$ for all $t,s \in \mathbb{R}$. Note, that by reparametrising the curve $\gamma$ we can always achieve that $C=1$. Geodesics with $C=1$ will be called unit-speed geodesics.

This paper aims to connect isometries of the underlying space $X$, and the Wasserstein space $\wpx$. Recall that given a metric space $(M,\varrho)$ a map $f:M\to M$ is called an isometry if it is bijective and distance preserving, i.e. $\varrho(f(m),f(m'))=\varrho(m,m')$ for all $m,m'\in M$. 

Recall that any isometry of $(X, \dm)$ induces an isometry of $\wpx$ by push-forward. Indeed, if $T: X \to X$ is an isometry, then the map $T_{\#}$ is an isometry of $\wpx$, where $T_{\#} \mu$ stands for the push-forward measure of $\mu$ by $T$ 
$$T_{\#} \mu(A) = \mu(T^{-1}(A)\quad\mbox{for all Borel sets}\quad A \subseteq X.$$

In what follows we shall call isometries of the type $T_{\#}$ {\it trivial isometries}. We call the Wasserstein space $\wpx$ {\it isometrically rigid} if all of its isometries are trivial.

Let us recall that by the results of Kloeckner \cite{K} the quadratic Wasserstein space $\mathcal{W}_2(\mathbb{R}^n,d_{\|\cdot\|_2})$ is not rigid as it has non-trivial shape-preserving isometries. Moreover, in the case $n=1$ there is a flow of exotic (non-shape-preserving)  isometries. Furthermore, the structure of the isometry group of Wasserstein spaces could depend both on the choice of $X$ and the value of $p$. Indeed, the results of \cite{GTV1} show in the one-dimensional case  $X=\mathbb{R}$ that the isometry group of $\mathcal{W}_2(\mathbb{R},d_{|\cdot|})$ is much larger than the isometry group of  $\mathcal{W}_p(\mathbb{R},d_{|\cdot|})$ for all $p\neq2$, while if $X=[0,1]$, then  the isometry group of $\mathcal{W}_1([0,1],d_{|\cdot|})$ is richer than the isometry group of $\mathcal{W}_p([0,1],d_{|\cdot|})$ for all $p>1$ (see \cite{GTV1}). As it was already pointed out in \cite{GTV1}, the same conclusion holds for every compact interval $[a,b]$. For our considerations, the relevant conclusion is that $\mathcal{W}_p([-1,1],|\cdot|)$ is rigid if and only if $p\neq1$.

In this paper, we distinguish the cases $p=1$ and $p>1$. We note that the case $p<1$ has already been covered by the general result \cite[Corollary 4.7.]{GTV2} which says that the Wasserstein space $\mathcal{W}_p(X,d)$ is isometrically rigid for every Polish underlying space $(X,d)$ and for every parameter $p<1.$ Furthermore, the underlying space $X$ will be either $\mathbb{R}^2$ or the closed unit ball $Q= [-1,1]^2$. Our main result shows, that in contrast to the above non-rigidity results in the one-dimensional case, (and also in the higher dimensional $\R^n$ with the Euclidean metric) in our situation the Wasserstein spaces are isometrically rigid when the underlying space $\R^2$ or $Q$ is considered with the maximum metric.  

\begin{theorem}\label{T: main} Let $X= \R^2$ or $X= Q= [-1,1]^2$ equipped with the maximum metric. Then for any $p\geq 1$ the Wasserstein space $\wpx$ is isometrically rigid. That is, for any isometry $\Phi: \wpx \to \wpx$ there exists a unique isometry $T:(X,\dm)\to (X,\dm)$ such that 
$$ \Phi (\mu) = T_{\#} \mu , \ \text{ for any } \ \mu \in \wpx.$$
\end{theorem}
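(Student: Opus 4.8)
The overall plan is the standard three-step skeleton for Wasserstein rigidity, with the non-branching argument of \cite{S-R} replaced by a direct analysis of the maximum-metric geometry. Writing $\mathcal{D}=\{\delta_x : x\in X\}$ for the set of Dirac masses, I would (i) prove that any isometry $\Phi\colon\wpx\to\wpx$ satisfies $\Phi(\mathcal{D})=\mathcal{D}$; (ii) note that $d_{W_p}(\delta_x,\delta_y)=\dm(x,y)$, so $(\mathcal{D},d_{W_p})$ is isometric to $(X,\dm)$ and the restriction $\Phi|_{\mathcal{D}}$ is an isometry $T\colon(X,\dm)\to(X,\dm)$ --- which, as $\mathrm{Isom}(X,\dm)$ consists of the translations composed with the dihedral group $D_4$ of the square (only $D_4$ when $X=Q$), is of a very rigid form; and (iii) after replacing $\Phi$ by the isometry $T_{\#}^{-1}\circ\Phi$, reduce to the case in which $\Phi$ fixes every Dirac mass and show that then $\Phi=\mathrm{id}$. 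Composing back gives $\Phi=T_{\#}$, and uniqueness of $T$ is immediate from its action on $\mathcal{D}$.

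The geometric input comes from the elementary identity $\dm(P,Q)=\max(|x_1-y_1|,|x_2-y_2|)=\tfrac12\big(|(x_1-y_1)+(x_2-y_2)|+|(x_1-y_1)-(x_2-y_2)|\big)$, which shows that in the rotated coordinates $u=x_1+x_2,\ v=x_1-x_2$ the space $(X,\dm)$ is a rescaled $\ell^1$-product of two copies of $\R$; the two diagonal directions in which only one of $u,v$ varies are exactly where both original coordinates realize the maximum. A first observation is that for every line $L\subset\R^2$ the restriction of $\dm$ to $L$ is, up to a scaling factor, the Euclidean line metric, so each line subspace $\wpl$ is an isometrically embedded copy of $\wpr$. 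I would catalogue the unit-speed geodesics of $\wpx$ using the product picture --- transport may advance the two rotated coordinates independently, which is precisely the source of branching --- and, crucially, isolate the line subspaces $\wpl$ by an intrinsic metric property (for instance, as the maximal subsets isometric to $\wpr$, equivalently the maximal non-branching isometrically embedded subspaces). Any isometry must then permute the family of line subspaces.

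The decisive and most delicate step is (i), and here the branching of the underlying space forces a new mechanism. Rather than characterizing Dirac masses through the uniqueness of geodesics leaving them --- which fails because geodesics branch --- I would recover $\mathcal{D}$ from the arrangement of the preserved line subspaces: if $L_1$ and $L_2$ are non-parallel lines, then the subspaces $\mathcal{W}_p(L_1,\dm)$ and $\mathcal{W}_p(L_2,\dm)$ intersect exactly in the measures supported on $L_1\cap L_2$, i.e.\ in the single Dirac mass $\delta_{L_1\cap L_2}$, whereas for distinct parallel lines the two subspaces are disjoint. Consequently $\mathcal{D}$ is precisely the set of single-element intersections of pairs of line subspaces, a purely metric description that $\Phi$ must preserve; this yields $\Phi(\mathcal{D})=\mathcal{D}$. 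Making rigorous the metric characterization of the line subspaces --- so that $\Phi$ genuinely permutes them --- is where the main work and the essential use of the maximum metric (as opposed to the Euclidean one, which is non-rigid for $p=2$) lie, and I expect the cases $p=1$ and $p>1$ to behave differently already here, since the non-branching feature that singles out the line subspaces is available on a line only when $p>1$.

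For the final step (iii) I would use that, once $\Phi$ fixes every Dirac mass, for each $x\in X$
$$d_{W_p}(\mu,\delta_x)^p=\int_X \dm(y,x)^p\dd\mu(y)=d_{W_p}(\Phi(\mu),\delta_x)^p,$$
so $\mu$ and $\Phi(\mu)$ carry the same $p$-th moment function $x\mapsto\int_X\dm(y,x)^p\dd\mu(y)$. It then remains to show that this transform separates measures, which identifies $\Phi(\mu)=\mu$ first on finitely supported measures and, by density and continuity of $\Phi$, on all of $\wpx$. I expect injectivity of the moment transform to be the point at which $p>1$ and $p=1$ genuinely diverge: for $p>1$ the strict convexity of $t\mapsto t^p$ should make the moment functions separate measures, whereas for $p=1$ the transform degenerates --- this is exactly the reason $\mathcal{W}_1([-1,1],d_{|\cdot|})$ fails to be rigid --- so in that case I would instead reconstruct $\mu$ from the finer geodesic and line structure of the plane, using the two transverse families of diagonal lines to rule out the one-dimensional exotic behaviour. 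The compact case $X=Q$ runs in parallel, the only change being that the available symmetry group is $D_4$ rather than the full group of translations and symmetries of the square.
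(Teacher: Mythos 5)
Your three-step skeleton (Diracs preserved $\Rightarrow$ underlying isometry $T$ $\Rightarrow$ reduce to $\Phi$ fixing all Diracs and show $\Phi=\mathrm{id}$) is not the route the paper takes, and both of the steps you leave as ``the main work'' contain genuine gaps that your proposed mechanisms cannot close.

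First, step (i). You want $\Phi$ to permute the line subspaces $\wpl$ by characterizing them as the maximal subsets isometric to $\wpr$ (or as maximal non-branching embedded subspaces), and then to recover $\mathcal{D}$ as the singleton intersections. Neither characterization is established, and the second one is available, as you yourself note, only for $p>1$ and only for \emph{diagonal} lines: for a non-diagonal line $L$ two points of $L$ are joined by many geodesics of $(\R^2,\dm)$, and for $p=1$ even $\mathcal{W}_1(L)$ is badly branching. There is also no argument that an isometric copy of $\wpr$ inside $\wpx$ must be of the form $\wpl$. The paper does not attempt to characterize all line subspaces; it characterizes only the \emph{diagonally supported} measures, via a concrete metric property (existence of a ``symmetrical'' $\eta$ with $d_{W_1}(\mu,\nu)=d_{W_1}(\nu,\eta)=\frac12 d_{W_1}(\mu,\eta)$ for every $\nu$, built from an explicit direction field $e(y)$), and for $p>1$ it characterizes Diracs by the same property and then diagonal alignment by uniqueness of geodesics from a Dirac. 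That is the tractable substitute for your unproved classification.

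Second, and more seriously, step (iii). For $p=1$ your moment transform provably does \emph{not} separate measures: since $\max(|a|,|b|)=\tfrac12(|a+b|+|a-b|)$, one has $\int\dm(x,y)\dd\mu(y)=\tfrac12\int|u_x-u_y|\dd\mu+\tfrac12\int|v_x-v_y|\dd\mu$ in your rotated coordinates, so $d_{W_1}(\mu,\delta_x)$ depends only on the two diagonal marginals of $\mu$, and two-direction Radon data does not determine a measure (the measures $\tfrac12\delta_{z_{11}}+\tfrac12\delta_{z_{22}}$ and $\tfrac12\delta_{z_{12}}+\tfrac12\delta_{z_{21}}$ on a $2\times2$ grid have identical diagonal marginals). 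For $p=2$ the diagonal subspace $\mathcal{W}_2(L_+,\dm)$ is isometric to $\mathcal{W}_2(\R,d_{|\cdot|})$, which admits exotic and shape-preserving isometries fixing every Dirac mass; so ``strict convexity of $t\mapsto t^p$'' cannot by itself yield injectivity of the moment transform, and one must explicitly rule out Kloeckner's isometries on the diagonal by testing against off-diagonal Diracs, which is exactly the computation the paper carries out in the $p=2$ case. For general $p>1$ you give no argument for injectivity at all. The paper's actual mechanism for passing from diagonal data to all of $\wpx$ is different and is the real content of Proposition~\ref{Diagonal-Rigidity}: a two-direction Radon transform $\mathcal{R}(\mu)=({P_{L_+}}_{\#}\mu,{P_{L_-}}_{\#}\mu)$, which is injective only on a carefully chosen dense class $\mathcal{F}$ of finitely supported measures with pairwise distinct weights, together with a perturbation argument showing $\Phi(\mathcal{F})\subseteq\mathcal{F}$. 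Your proposal contains no substitute for this, and without it the reduction from ``$\Phi$ fixes all Diracs'' (or even ``$\Phi$ fixes all diagonally supported measures'') to ``$\Phi=\mathrm{id}$'' does not go through.
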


The proof will be a combination of Proposition \ref{Diagonal-Rigidity} with Theorem \ref{T: main-1}, Theorem \ref{T: main-2}, Theorem \ref{T: main-3}, and Theorem \ref{T: main-4}.\\

Due to the difficulty caused by the branching nature of the underlying space, instead of Dirac masses, we shall consider measures supported on diagonal lines and prove that this class of measures is preserved by isometries. This seems to be a similar phenomenon to the one in the recent work of Balogh, Titkos, and Virosztek  \cite{BTV} about rigidity in the setting of the Heisenberg group. In that paper, the authors proved that measures supported on vertical lines in the Heisenberg group are preserved. In our setting vertical lines will be replaced by diagonals that are suitable to our geometry. In the sequel, we shall consider two special lines (briefly: diagonals)
$$L_{+}=\{(t,t)\,|\, t\in \mathbb{R}\} \quad \text{and} \quad  L_{-}=\{(t, -t)\,|\, t\in \mathbb{R}\},$$
and their translates:
\begin{equation} \label{eq:diag-line-def}
L= L_{\varepsilon, a} =\left\{ (x_1, x_2) \in \R^2 \, \middle| \, x_2= \varepsilon x_1 +a \right\} \text{ for some } \varepsilon \in \{-1,1\} \text{ and } a \in \R.
\end{equation}
When we are working in $Q=[-1,1]^2$, (with a slight abuse of notation) these symbols denote the line segment contained in $Q$.

The consideration of diagonal lines in our arguments is based on the observation that there is a unique geodesic (with respect to the maximum metric) connecting two points in the plane if and only if the two points are on the same diagonal $L_{\varepsilon, a}$.  We think that understanding rigidity in this special branching space will give us important clues to tackle the same question in general normed spaces.

\begin{definition} Let $X$ be either $\R^2$ or $Q$ equipped by the maximum metric. We call the Wasserstein space $\wpx$ {\it diagonally rigid}, if for every Wasserstein isometry $\Phi:\wpx\to\wpx$ there exists an isometry $T:(X,\dm)\to(X,\dm)$ such that $\Phi(\mu)=T_\#(\mu)$ whenever the support of $\mu$ is a subset of $L_+$ or $L_{-}$. 
\end{definition}

Our aim in Section \ref{s:diagonal-implies-full} is to prove that diagonal rigidity implies rigidity. This step is quite general in the sense that its proof works the same way for any $p\geq 1$ and $X\in \{\R^2,Q\}$.  To prove that $\wpx$ is indeed diagonally rigid is more tricky and uses very different arguments for different underlying spaces $X= \R^2$ and $X= [-1,1]^2$ and for different values of $p$. These results are proven in Section \ref{s: main}.  

\section{Diagonal rigidity implies rigidity} \label{s:diagonal-implies-full}

The main result of this section is the following:

\begin{proposition} \label{Diagonal-Rigidity}
Let $p\geq1$ and $X\in\{\R^2,Q\}$. Assume that the space $\wpx$ is diagonally rigid. Then the $\wpx$ is rigid.
\end{proposition}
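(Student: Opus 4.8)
The plan is to normalize the given isometry so that it fixes all diagonal measures, and then to show that such a normalized isometry must be the identity. Let $\Phi\colon\wpx\to\wpx$ be an arbitrary isometry. By the assumed diagonal rigidity there is an isometry $T\colon(X,\dm)\to(X,\dm)$ with $\Phi(\mu)=T_\#\mu$ for every $\mu$ whose support lies in $L_+$ or in $L_-$. Since $T_\#$ is itself an isometry of $\wpx$, the composition $\Psi:=(T^{-1})_\#\circ\Phi$ is again an isometry, and by construction it fixes pointwise every measure supported on $L_+$ or on $L_-$. As $\Phi=T_\#\circ\Psi$, it suffices to prove $\Psi=\mathrm{id}$; this also yields uniqueness of $T$, since if $T_\#=S_\#$ then $\delta_{Tx}=\delta_{Sx}$ for all $x$, whence $T=S$.

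The first substantial step is to show that $\Psi$ fixes every Dirac mass. This is where the branching nature of $(X,\dm)$ blocks the standard non-branching argument that isometries preserve the set of Diracs, so instead I would use the already-fixed diagonal measures to characterize Dirac masses metrically: a point $x\in X$ is the intersection of its two diagonals, and $\delta_x$ should be singled out among all measures by its distances to the fixed families $\{\delta_{(t,t)}\}_t$ and $\{\delta_{(s,-s)}\}_s$ together with the uniqueness of geodesics between points on a common diagonal. Granting that $\Psi$ maps Diracs to Diracs, the induced map $T'\colon X\to X$ given by $\Psi(\delta_x)=\delta_{T'(x)}$ satisfies $\dm(T'x,T'y)=d_{W_p}(\Psi\delta_x,\Psi\delta_y)=d_{W_p}(\delta_x,\delta_y)=\dm(x,y)$, so $T'$ is an isometry of $(X,\dm)$. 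Because $\Psi$ fixes $\delta_x$ for every $x\in L_+\cup L_-$, the isometry $T'$ fixes $L_+\cup L_-$ pointwise; as the isometry group of $(\R^2,\dm)$ (respectively of $Q$) is generated by translations and the symmetries of the square, and the only element fixing both diagonals pointwise is the identity, we obtain $T'=\mathrm{id}$, i.e. $\Psi(\delta_x)=\delta_x$ for all $x\in X$.

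It then remains to upgrade ``$\Psi$ fixes all Diracs'' to ``$\Psi$ fixes all measures,'' and here it is essential to exploit that $\Psi$ fixes the \emph{non-Dirac} diagonal measures as well: fixing Diracs alone cannot suffice, since the quadratic Wasserstein space over a line already admits exotic isometries fixing every Dirac, and excluding such behaviour is precisely what the full family of fixed diagonal measures provides. For any measure $\mu$ and any fixed measure $\nu$ (a Dirac, or a measure on $L_+$ or $L_-$), the isometry property gives $d_{W_p}(\Psi\mu,\nu)=d_{W_p}(\mu,\nu)$, so $\Psi\mu$ and $\mu$ share all distances to the entire fixed family. I would then show that this family, together with the closedness of the fixed-point set of $\Psi$ and the uniqueness of geodesics between common-diagonal configurations, is rich enough to force $\Psi\mu=\mu$: reducing to finitely supported $\mu$ by density, locating the atoms by testing against suitable Diracs and diagonal measures, and arranging the argument so that the same reasoning covers every $p\ge1$ and both $X\in\{\R^2,Q\}$.

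The main obstacle is the Dirac-preservation step of the second paragraph: in a branching space one cannot invoke the usual argument that isometries send Diracs to Diracs, so the entire difficulty is to extract this from the fixed diagonal data. A secondary, genuinely delicate point lies in the final upgrade, because distances to the fixed diagonal measures do not by themselves separate arbitrary measures; for $p=1$ they detect only the two diagonal marginals. Consequently the last step must be run through the geodesic and closedness structure rather than through a naive ``distances separate points'' argument, and it must be set up uniformly so that the single degenerate value $p=1$ and the remaining range $p>1$ are handled by the same scheme.
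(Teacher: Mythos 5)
Your normalization step (replacing $\Phi$ by $\Psi=(T^{-1})_\#\circ\Phi$, which fixes every measure supported on $L_+$ or $L_-$, and deducing uniqueness of $T$ from Diracs) is exactly how the paper begins. After that, however, your argument is programmatic rather than a proof: both of your substantive steps are announced (``I would use\dots'', ``Granting that\dots'', ``I would then show\dots'') but not carried out, and the second one is precisely where the real difficulty lies. The paper's mechanism is the pair of metric projections $P_{L_+},P_{L_-}$ onto the two diagonals: one first proves that ${P_{L_\pm}}_\#\mu$ is the \emph{unique} nearest point of $\mathcal{W}_p(L_\pm,\dm)$ to $\mu$ (possible because balls of $\dm$ are axis-aligned squares, so the nearest-point projection onto a diagonal is single-valued), from which it follows that $\Psi$ commutes with ${P_{L_\pm}}_\#$. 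For a Dirac this already finishes that step, since the two projections of $\delta_x$ determine $x$ as the intersection of the two preimage lines; your sketch of the Dirac step would become rigorous once you had such a commutation lemma, but you never introduce the tool that makes it work.

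The genuine gap is the final upgrade. As you yourself observe, the fixed diagonal data only pins down the two diagonal marginals ${P_{L_+}}_\#\mu$ and ${P_{L_-}}_\#\mu$ of $\Psi(\mu)$; for a measure with $N$ atoms this leaves an entire $(N\times N)$-grid of measures with the same pair of marginals, so ``testing against Diracs and diagonal measures'' cannot separate $\Psi(\mu)$ from $\mu$, and invoking ``geodesic and closedness structure'' names no mechanism that does. The paper closes this hole in two moves: it restricts to a dense family $\mathcal{F}$ of finitely supported measures with pairwise \emph{distinct} weights and pairwise distinct projections, on which the pair of projections is injective; and, crucially, it proves $\Psi(\mathcal{F})\subseteq\mathcal{F}$ by a perturbation argument --- if $\Psi(\mu)$ had two atoms sharing a projection, one perturbs $\mu$ to a measure $\mu'$ that is the unique minimizer of $\xi\mapsto d_{W_p}(\mu,\xi)$ among measures with prescribed projections, while $\Psi(\mu)$ would admit two distinct equidistant competitors $\nu_1'\neq\nu_2'$ with those same projections, contradicting uniqueness via the isometry. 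Without this step (or a substitute for it) your outline does not rule out that $\Psi$ moves mass around the grid while preserving both diagonal marginals, so the proposal does not yet prove the proposition.
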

For the sake of brevity, we only prove the case $X=\mathbb{R}^2$. The same argument works in the case $X=Q$, replacing lines by line segments contained in $Q$.

The proof of the statement will be a combination of lemmas. The first lemma is about the minimal distance projection onto lines. 
\begin{lemma}
Let $L\subset\R^2$ be a line that is not parallel to the $x$-axis and the $y$-axis, and let $x\in \R^2$. Then there exists unique $\hat{x}\in L$ such that $\dm(x,\hat{x})\le \dm(x,y)$ for all $y\in L$. 
\end{lemma}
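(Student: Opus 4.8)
The plan is to reduce the problem to a one--variable minimisation and to exploit the fact that $\dm$--balls are axis--aligned squares. Since $L$ is parallel to neither coordinate axis, it is not vertical, hence it is the graph of an affine function, and it is not horizontal, hence that function has nonzero slope; so I would write $L=\{(t,mt+c)\mid t\in\R\}$ with $m\neq0$, and put $x=(a,b)$. The quantity to be minimised then becomes the single scalar function
\[
f(t)=\dm\big(x,(t,mt+c)\big)=\max\big\{\,|t-a|,\ |mt+c-b|\,\big\}.
\]
Both $t\mapsto|t-a|$ and $t\mapsto|mt+c-b|$ are convex, so $f$ is convex; and $f(t)\ge|t-a|\to\infty$ as $|t|\to\infty$, so $f$ is coercive. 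A continuous, convex, coercive function on $\R$ attains its minimum, which yields the existence of a minimiser $\hat x=(\hat t,m\hat t+c)$.

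For uniqueness I would use that the minimiser set of a convex function on $\R$ is a closed interval $[t_1,t_2]$, and show that it cannot be nondegenerate. If $t_1<t_2$, then $f$ is constant on $[t_1,t_2]$. The two functions $|t-a|$ and $|mt+c-b|$ have together only two corner points, so on some subinterval of $(t_1,t_2)$ avoiding these corners both are affine, with slopes in $\{-1,+1\}$ and in $\{-m,+m\}$ respectively --- all nonzero precisely because $m\neq0$. The maximum of two affine functions with nonzero slopes is never constant on an interval, since on each branch the active slope is nonzero; this contradicts the constancy of $f$. Hence $t_1=t_2$, and $\hat x$ is unique.

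Equivalently, and perhaps more transparently, I would phrase the uniqueness geometrically. Writing $r_0=\dm(x,L)$ for the attained minimal distance, minimality forces $L$ to avoid the open square $\{y:\dm(x,y)<r_0\}$ while still meeting its closure; thus $L$ is a supporting line of the closed $\dm$--ball $\bar B(x,r_0)$, which is an axis--aligned square. A supporting line of a square meets it either in a single vertex or along an entire edge, and the edge case would force $L$ to be parallel to a coordinate axis. Since this is excluded by hypothesis, $L$ touches the square in exactly one point. The only genuinely delicate point --- and the place where the hypothesis is indispensable --- is ruling out this edge contact, equivalently a flat piece of $f$: it is exactly the nonzero slope $m$ that makes both coordinate gaps vary strictly as one moves along $L$, so that neither the horizontal nor the vertical distance can remain constant and no interval of minimisers can occur.
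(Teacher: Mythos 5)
Your proposal is correct, and its second, geometric formulation is essentially the paper's own argument: the paper defines $\hat{x}$ as the first point of contact of the metric balls centered at $x$ with $L$, and observes that since these balls are axis-aligned squares and $L$ is parallel to neither axis, the contact point is unique. You make explicit the step the paper leaves implicit, namely that a supporting line of a square meets it either in a single vertex or along a whole edge, and that the edge case is exactly the excluded axis-parallel case. Your first, analytic argument is a genuinely different (and self-contained) route: parametrising $L$ and minimising the convex, coercive function $f(t)=\max\{|t-a|,|mt+c-b|\}$, then ruling out a flat interval of minimisers because both branches have nonzero slope off their corner points. This buys a verification that requires no geometric picture and isolates precisely where the hypothesis $m\neq 0$ enters; the paper's version buys brevity and makes the role of the square-shaped balls transparent. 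Both arguments are complete (the degenerate case $x\in L$, where $r_0=0$, is trivially covered), so there is no gap.
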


\begin{proof}
If $x\in L$, then we take $\hat{x}:=x$ and the claim is obvious. If $x\not\in L$, then $\hat{x}$ is the first point of contact of metric balls centered at $x$ with $L$. 
Since metric balls are squares aligned with the $x$ and $y$-axes, and by assumption $L$ is not parallel to any of these axes, $\hat{x}$ is uniquely defined. 

\end{proof}

Denoting by $P_L(x)=\hat{x}$, we obtain a well-defined projection map $P_L:\R^2\to L$. 
 Our second statement is about the projection of measures defined by the push forward under this projection map. 
\begin{lemma} \label{lemma:meas-proj}
Let $\mu \in \wprt$. Then,  the measure $\hat{\mu}={P_L}_{\#}(\mu)$ is the metric projection $\mathcal{W}_p(\mathbb{R}^2,\dm) \to\mathcal{W}_p(L,\dm)$ i.e. the unique measure in $\wpl$ such that $$d_{W_p}(\mu, \hat{\mu})\le d_{W_p}(\mu, \nu)$$
for all $\nu\in \wpl$.
\end{lemma}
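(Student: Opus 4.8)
The plan is to identify the minimal value of $d_{W_p}(\mu,\nu)$ over $\nu\in\wpl$ explicitly and then show it is attained only by $\hat\mu$. My guess is that the cheapest way to push $\mu$ onto $L$ is to slide each point $x$ straight to its nearest point $P_L(x)$, so the candidate optimal value should be $\int_{\R^2}\dm\big(x,P_L(x)\big)^p\dd\mu(x)$. Before anything else I would record two routine technical facts: that $P_L$ is Borel measurable (one checks directly that it is continuous, using uniqueness of the nearest point from the previous lemma), so that both ${P_L}_{\#}\mu$ and the plan $(\mathrm{id},P_L)_{\#}\mu$ are well defined; and that $\hat\mu$ genuinely belongs to $\wpl$, which follows from the bound $\dm(P_L(x),x_0)\le 2\dm(x,x_0)+\dm(x_0,z_0)$ for a fixed $z_0\in L$ together with the finite $p$-th moment of $\mu$.

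For the upper bound I would use the transport plan $\pi_0=(\mathrm{id},P_L)_{\#}\mu\in C(\mu,\hat\mu)$, which immediately gives
$$ d_{W_p}(\mu,\hat\mu)^p \le \iint \dm(x,y)^p\dd\pi_0(x,y) = \int_{\R^2} \dm\big(x,P_L(x)\big)^p\dd\mu(x).$$
For the matching lower bound I would take an arbitrary competitor $\nu\in\wpl$ and any coupling $\pi\in C(\mu,\nu)$. Since $\nu$ is supported on $L$, every pair $(x,y)$ in the support of $\pi$ has $y\in L$, and minimality of the nearest point yields the pointwise estimate $\dm(x,y)\ge\dm(x,P_L(x))$. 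Integrating against $\pi$ and using that the right-hand side depends only on $x$, whose marginal is $\mu$, gives $d_{W_p}(\mu,\nu)^p\ge\int\dm(x,P_L(x))^p\dd\mu$ for every such $\nu$. Specialising to $\nu=\hat\mu$ shows the candidate value equals $d_{W_p}(\mu,\hat\mu)^p$ exactly, and the general inequality then certifies that $\hat\mu$ is a minimizer.

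The heart of the argument, and the step I expect to require the most care, is uniqueness. Suppose $\nu\in\wpl$ also attains the minimal value and let $\pi$ be an optimal coupling for $(\mu,\nu)$. Then the lower-bound computation is forced to be an equality, so $\iint\big[\dm(x,y)^p-\dm(x,P_L(x))^p\big]\dd\pi(x,y)=0$ with a nonnegative integrand; hence $\dm(x,y)=\dm(x,P_L(x))$ for $\pi$-almost every $(x,y)$. This says that for $\pi$-a.e.\ pair the point $y$ realises the minimal distance from $x$ to $L$, and the pointwise uniqueness of the nearest point from the previous lemma forces $y=P_L(x)$. Consequently $\pi$ is concentrated on the graph of $P_L$, so $\pi=(\mathrm{id},P_L)_{\#}\mu$, and its second marginal $\nu$ must equal ${P_L}_{\#}\mu=\hat\mu$. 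The only genuine subtlety is the passage from ``$y$ minimizes $\dm(x,\cdot)$ on $L$ for $\pi$-a.e.\ pair'' to ``$\pi$ is carried by the graph of $P_L$,'' which rests entirely on the uniqueness of $P_L$ established earlier; the remaining steps are bookkeeping with marginals and push-forwards.
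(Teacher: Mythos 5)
Your proposal is correct and follows essentially the same route as the paper: the pointwise inequality $\dm(x,y)\ge\dm(x,P_L(x))$ for $y\in L$ integrated against an arbitrary coupling gives the lower bound, the plan $(\mathrm{id}\times P_L)_{\#}\mu$ attains it, and equality forces $y=P_L(x)$ for $\pi$-a.e.\ pair by uniqueness of the nearest point, whence $\nu=\hat\mu$. The only difference is that you spell out the measurability of $P_L$ and the membership $\hat\mu\in\wpl$, which the paper leaves implicit.
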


\begin{proof}
To prove the inequality in the statement, let $\nu\in \wpl$ be an arbitrary measure and $\pi$ be an optimal coupling between $\mu$ and $\nu$. Then $$d_{W_p}^p(\mu, \nu)=\int\limits_{\R^2\times L}\dm^p(x,y)~\mathrm{d}\pi(x,y).$$
Since $\dm^p(x,y)\ge \dm^p(x, P_L(x))$ for all $x\in \R^2, y\in L$ and $\supp(\pi)\subseteq\R^2\times L$,  we have that 
\begin{align*}
\begin{split}
d_{W_p}^p(\mu, \nu)&=\int\limits_{\R^2\times L}\dm^p(x,y)~\mathrm{d}\pi(x,y) \ge \int\limits_{\R^2\times \R^2}\dm^p(x, P_L(x))~\mathrm{d}\pi(x, y)\\&=\int\limits_{\R^2}\dm^p(x, P_L(x))~\mathrm{d}\mu(x)=d_{W_p}^p(\mu , \hat{\mu}).
\end{split}
\end{align*}
This shows that $\hat{\mu}$ is a minimizer for the problem $\inf\{d_{W_p}^p(\mu, \nu)\colon \nu\in \wpl\}$.

To show that $\hat{\mu}$ is the unique minimizer, note that in the case equality we have that 
$y= P_L(x)$ for $\pi$ almost every $(x,y)$ showing that $\pi = (Id \times P_L)_{\#} \mu$ and thus $\nu= P_{L\#} \mu$.
\end{proof}

The next lemma shows that the action of the isometry and the push-forward by projection commute. 
\begin{lemma} \label{commutation} 
If $\Phi: \wprt \to \wprt$ is an isometry such that $\Phi(\mu)=\mu$ for all $\mu\in \mathcal{W}_p(L_+,\dm)\cup\mathcal{W}_p(L_-,\dm)$ then we have the commutation relations
$$
\Phi({P_{L_+}}_{\#}(\mu))={P_{L_+}}_{\#}(\Phi(\mu)) \qquad\text{ and }\qquad \Phi({P_{L_-}}_{\#}(\mu))={P_{L_-}}_{\#}(\Phi(\mu))
$$
 for all $\mu \in \wprt.$
\end{lemma}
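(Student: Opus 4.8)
The plan is to exploit the variational characterization of the projection established in Lemma \ref{lemma:meas-proj}, together with the fact that $\Phi$ fixes every measure supported on $L_+$ or $L_-$. By symmetry it suffices to treat $L_+$. The key observation is that $\hat\mu = {P_{L_+}}_{\#}(\mu)$ is characterized purely metrically: it is the unique nearest point in $\mathcal{W}_p(L_+,\dm)$ to $\mu$. Since $\Phi$ is an isometry, it maps nearest points to nearest points, so $\Phi(\hat\mu)$ should be the nearest point in $\Phi(\mathcal{W}_p(L_+,\dm))$ to $\Phi(\mu)$; the hypothesis that $\Phi$ fixes $\mathcal{W}_p(L_+,\dm)$ pointwise then forces $\Phi(\mathcal{W}_p(L_+,\dm)) = \mathcal{W}_p(L_+,\dm)$, and hence $\Phi(\hat\mu)$ is the nearest point in $\mathcal{W}_p(L_+,\dm)$ to $\Phi(\mu)$, which is exactly ${P_{L_+}}_{\#}(\Phi(\mu))$.

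In detail, first I would record that $\Phi$ maps the set $\mathcal{W}_p(L_+,\dm)$ onto itself: indeed $\Phi$ fixes each of its elements, so in particular restricts to the identity on this set, and the identity is a bijection of it. Next, fix $\mu\in\wprt$ and set $\hat\mu={P_{L_+}}_{\#}(\mu)$. By Lemma \ref{lemma:meas-proj}, $\hat\mu$ is the \emph{unique} minimizer of $\nu\mapsto d_{W_p}(\mu,\nu)$ over $\nu\in\mathcal{W}_p(L_+,\dm)$. Applying the isometry $\Phi$ and using $d_{W_p}(\Phi(\mu),\Phi(\nu))=d_{W_p}(\mu,\nu)$ for all $\nu$, I would conclude that $\Phi(\hat\mu)$ minimizes $\eta\mapsto d_{W_p}(\Phi(\mu),\eta)$ over $\eta\in\Phi(\mathcal{W}_p(L_+,\dm))=\mathcal{W}_p(L_+,\dm)$, and that it is the unique such minimizer. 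But by Lemma \ref{lemma:meas-proj} applied to the measure $\Phi(\mu)$, the unique minimizer over $\mathcal{W}_p(L_+,\dm)$ is precisely ${P_{L_+}}_{\#}(\Phi(\mu))$. Uniqueness therefore yields
$$
\Phi\bigl({P_{L_+}}_{\#}(\mu)\bigr)=\Phi(\hat\mu)={P_{L_+}}_{\#}(\Phi(\mu)),
$$
and the identical argument with $L_+$ replaced by $L_-$ gives the second relation.

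I expect the only genuine point requiring care — and the likely main obstacle — to be verifying that the isometry transports the projection problem correctly, that is, that the image set $\Phi(\mathcal{W}_p(L_+,\dm))$ over which $\Phi(\hat\mu)$ is optimal really coincides with $\mathcal{W}_p(L_+,\dm)$, so that Lemma \ref{lemma:meas-proj} can be reapplied. This is where the hypothesis that $\Phi$ fixes $\mathcal{W}_p(L_+,\dm)\cup\mathcal{W}_p(L_-,\dm)$ pointwise (rather than merely setwise) is used: it guarantees both the set-invariance and that no reindexing of the target family occurs. Everything else is a formal transport of the uniqueness statement in Lemma \ref{lemma:meas-proj} through the distance-preserving bijection $\Phi$, and requires no new estimates.
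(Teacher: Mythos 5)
Your proposal is correct and follows essentially the same route as the paper's proof: both arguments transport the unique-nearest-point characterization of ${P_{L_+}}_{\#}$ from Lemma \ref{lemma:meas-proj} through the isometry, using that $\Phi$ fixes $\mathcal{W}_p(L_+,\dm)$ pointwise to identify $\Phi(\hat\mu)=\hat\mu$ as the unique minimizer of the distance to $\Phi(\mu)$ over $\mathcal{W}_p(L_+,\dm)$, hence equal to ${P_{L_+}}_{\#}(\Phi(\mu))$. No substantive difference.
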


\begin{proof}
The proof is based on the previous lemma, and we prove only the first commutation relation regarding $L_+$ as the case of $L_-$ is very similar. 
\par 
Let $\mu\in \wprt$ and $\hat{\mu}={P_{L_+}}_{\#}(\mu)$. We have to show that $\Phi(\hat{\mu})={P_{L_+}}_{\#}(\Phi(\mu))$.
Since $\hat\mu\in \mathcal{W}_p(L_+,\dm)$, we note that $\Phi(\hat\mu) = \hat\mu$ by assumption. 
As $\Phi$ is an isometry, $$D:=d_{W_p}(\mu, \hat{\mu})=d_{W_p}(\Phi(\mu), \Phi(\hat{\mu}))=d_{W_p}(\Phi(\mu), \hat{\mu}) .$$
Let $\nu\in \mathcal{W}_p(L_+,\dm)$. Thus $ \nu =\Phi^{-1}(\nu)\in \mathcal{W}_p(L_+,\dm),$ and therefore 
$$
d_{W_p}(\Phi(\mu), \nu)=d_{W_p}(\mu, \Phi^{-1}(\nu)) =d_{W_p}(\mu, \nu) \ge D \ \ \text{ for all}\ \nu\in \mathcal{W}_p(L_+,\dm).
$$
Since $d_{W_p}(\Phi(\mu), \Phi(\hat{\mu}))=D$ and $\hat{\mu}=\Phi(\hat{\mu})\in \mathcal{W}_p(L_+,\dm)$ is the minimizer of the distance, from the uniqueness part of Lemma \ref{lemma:meas-proj}, we have $\hat{\mu}= \Phi(\hat{\mu})={P_{L_+}}_{\#}(\Phi(\mu))$ as required. 
\end{proof}

After this preparation we can turn to the proof of Proposition \ref{Diagonal-Rigidity}. The proof is inspired by Bertrand and Kloeckner \cite{BK} and it is based on the method of Radon transform. In our case the Radon transform will be a mapping $\mathcal{R}: \wprt\to\mathcal{W}_p(L_+,\dm)\times\mathcal{W}_p(L_-,\dm)$ defined by
\begin{equation}
     \mathcal{R}(\mu) :=\Big( {P_{L_{+}}}_{\#}(\mu), {P_{L_{-}}}_{\#}(\mu)\Big).
\end{equation}

\begin{proof}[Proof of Proposition \ref{Diagonal-Rigidity}]
Without loss of generality we can assume that $\Phi(\mu)=\mu$ for all $\mu\in\mathcal{W}_p(L_{+},\dm)\cup \mathcal{W}_p(L_{-},\dm)$.
Now  we want to extend this property to the whole $\wprt$. The main idea is to consider a subset $\mathcal{F}\subset \wprt$ such that
\begin{enumerate}
    \item[$\bullet$] $\mathcal{F}$ is dense in  $\wprt$, 
    \item[$\bullet$] for any $\mu_1, \mu_2\in \mathcal{F}$ the following holds:
    $${P_{L_{+}}}_{\#}(\mu_1)={P_{L_{+}}}_{\#}(\mu_2)\quad\mbox{and}\quad {P_{L_{-}}}_{\#}(\mu_1)={P_{L_{-}}}_{\#}(\mu_2) \qquad\Longrightarrow\qquad \mu_1=\mu_2,$$
    \item[$\bullet$] $\Phi(\mathcal{F}) \subseteq \mathcal{F}$.
    \end{enumerate}
    The second condition is the injectivity of the Radon transform on the set $\mathcal{F}$.   
    Suppose that we have such an $\mathcal{F}$. Then, applying Lemma \ref{commutation} we get that for any $\mu\in \mathcal{F}$  $${P_{L_{+}}}_{\#}(\Phi(\mu))=\Phi({P_{L_{+}}}_{\#}(\mu))={P_{L_{+}}}_{\#}(\mu)$$
    and
    $${P_{L_{-}}}_{\#}(\Phi(\mu))=\Phi({P_{L_{-}}}_{\#}(\mu))={P_{L_{-}}}_{\#}(\mu).$$
  By the third condition we have $\Phi(\mu) \in \mathcal{F}$ and so we can apply the second condition for the two measures $\mu_1 = \mu$ and $\mu_2 = \Phi(\mu)$. This implies that $\Phi(\mu)=\mu$ for all $\mu\in \mathcal{F}$. Using the density of $\mathcal{F}$ in  $\wprt$ (the first condition) we get that $\Phi(\mu)=\mu$ for all $\mu\in\mathcal{W}_p(\R^2,\dm)$. 

    Therefore it is enough to find a set  $\mathcal{F}$ that satisfies the conditions above.
    We define $\mathcal{F}$ by the following: 
\begin{align*}
\mathcal{F}:=&\Bigg\{\sum_{i=1}^N a_i\delta_{x_i}\,\Bigg| N\ge 1, \sum_{i=1}^Na_i = 1, \\ & \textrm{for} \ i\ne j \ \textrm{we require} \ a_i\ne a_j, \ \textrm{and} \ P_{L_+} x_i\ne P_{L_+} x_j \textrm{ and } P_{L_-} x_i\ne P_{L_-} x_j  \Bigg\}.
\end{align*}

Let us check the required conditions for this choice of $\mathcal{F}$. For the first condition we use the fact that the set of finitely supported measures is dense in $\wprt$. Since a finitely supported measure can be clearly approximated in $\wprt$ by elements of $\mathcal{F}$, the first property follows. 

In order to show the second property let $\mu_1, \mu_2 \in \mathcal{F}$ such that $P_{L_{+}}{}_{\#}(\mu_1) = P_{L_{+}}{}_{\#}(\mu_2)$ and $P_{L_{-}}{}_{\#}(\mu_1) = P_{L_{-}}{}_{\#}(\mu_2)$. We have to conclude that $\mu_1= \mu_2$. 

To check this, let us assume that 
$$ \mu_1 = \sum_{i=1}^{N_1} a_i^{(1)} \delta_{x_i^{(1)}}, \ \textrm{and}  \ \mu_2 = \sum_{i=1}^{N_2} a_i^{(2)} \delta_{x_i^{(2)}}.$$ 
By the condition that $P_{L_{+}}{}_{\#}(\mu_1) = P_{L_{+}}{}_{\#}(\mu_2)$ and $P_{L_{-}}{}_{\#}(\mu_1) = P_{L_{-}}{}_{\#}(\mu_2)$ we obtain the equations 
$$\sum_{i=1}^{N_1} a_i^{(1)} \delta_{P_{L_{+}}(x_i^{(1)})} = \sum_{i=1}^{N_2} a_i^{(2)} \delta_{P_{L_{+}}(x_i^{(2)})} $$
and
$$\sum_{i=1}^{N_1} a_i^{(1)} \delta_{P_{L_{-}}(x_i^{(1)})} = \sum_{i=1}^{N_2} a_i^{(2)} \delta_{P_{L_{-}}(x_i^{(2)})}. $$
From here we conclude, that  $N_1=N_2= N$, $a_i^{(1)}=a_i^{(2)}$ and $x_i^{(1)}= x_i^{(2)}$ for $i= 1,\ldots, N$  which gives that $\mu_1=\mu_2$.

To verify the third property, i.e. that $\Phi(\mathcal{F})\subseteq \mathcal{F}$, let us take an element $\mu =\sum_{i=1}^N a_i \delta_{x_i} \in \mathcal{F}$. Recalling that $\Phi$ fixes all measures supported on the diagonals $L_+$ and $L_-$, we get by Lemma \ref{commutation} 
 that 
$$ {P_L}_{\#}(\mu)=\Phi({P_L}_{\#}(\mu))={P_L}_{\#}(\Phi(\mu))$$
for $L\in\{L_+,L_{-}\}$, and therefore we have 
$${P_{L_+}}_{\#}(\Phi(\mu))= {P_{L_+}}_{\#}(\mu) = \sum_{i=1}^N a_i \delta_{P_{L_+}(x_i)} , $$
where $P_{L_+}(x_i) \in L_+$ and 
$$ {P_{L_-}}_{\#}(\Phi(\mu))= {P_{L_-}}_{\#}(\mu) = \sum_{i=1}^N a_i \delta_{P_{L_-}(x_i)} , $$
where $P_{L_-}(x_i)\in L_{-}$. In conclusion we obtain that the Radon transform of $\mu$ and $\Phi(\mu)$ are equal 
\begin{equation*}
\mathcal{R} :=\mathcal{R}(\mu)=\mathcal{R}(\Phi(\mu))= \left( \sum_{i=1}^N a_i \delta_{P_{L_+}(x_i)}, \sum_{i=1}^N a_i \delta_{P_{L_-}(x_i)} \right).
\end{equation*}

Now observe that $\Phi(\mu)$ is a finitely supported measure with support contained in the intersection of the two pre-images:
$$ (P_{L_+})^{-1}( \{ P_{L_+}(x_1), \ldots, P_{L_+}(x_N) \} ) \cap (P_{L_-})^{-1}( \{ P_{L_-}(x_1), \ldots, P_{L_-}(x_N) \} ).$$
This intersection is an $N$-by-$N$ grid, and we refer to its points by $z_{i,j}$ $(1\le i,j\le N)$,  
where $z_{i,j}$ satisfies that $P_{L_+}(z_{i,j})=P_{L_+}(x_i)$ and $P_{L_-}(z_{i,j})=P_{L_-}(x_j)$. Hence $\Phi(\mu)$ can be written as $$\Phi(\mu)=\sum_{i=1}^N\sum_{j=1}^N a_{i,j} \delta_{z_{i,j}},$$ where $a_{i,j}\ge 0$, $\sum_{i=1}^N\sum_{j=1}^N a_{i,j}=1$ and $\sum_{i=1}^N a_{i,j}=a_j$ and $\sum_{j=1}^N a_{i,j}=a_i$. Since the grid is finite, there is a positive minimal distance between its points
\begin{equation*}
    c := \min_{(i,j)\ne(i',j')}d_m(z_{i,j},z_{i',j'})>0.
\end{equation*}
From here, we assume by contradiction that $\Phi(\mu)\notin\mathcal{F}$.
Since $\Phi(\mu)\notin\mathcal{F}$, then there exist at least two points $z,z'\in\supp(\Phi(\mu))$ such that their projection onto either $L_+$ or $L_-$ coincide. 
\begin{figure}[H]
\centering
\includegraphics[width=0.7\textwidth]{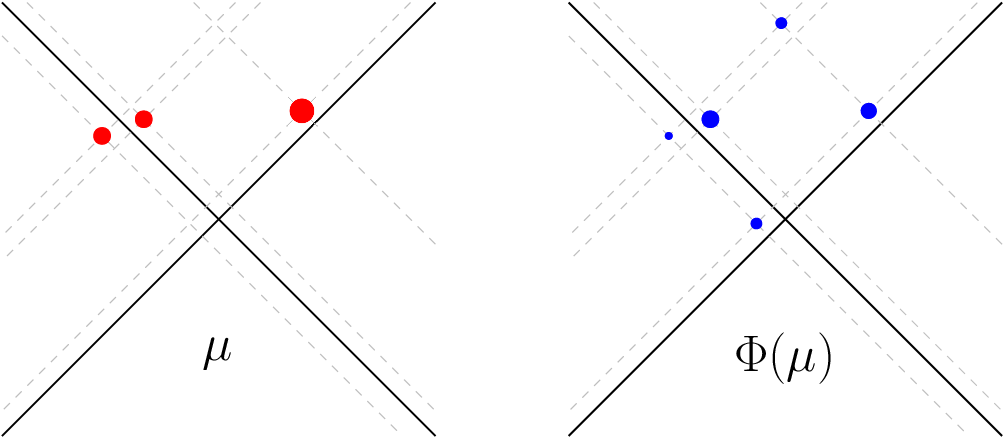}
\caption{Illustration of a finitely supported measure $\mu$ with a possible image $\Phi(\mu)$ and the grid determined by the pre-images of $P_{L_+}$ and $P_{L_-}$.}
\label{fig:mu-and-Phi(mu)}
\end{figure}
 We briefly sketch how to obtain the desired contradiction and we give the details later:

First, by slightly perturbing the measure $\mu$ we will construct a measure $\mu'$ such that
\begin{equation} \label{eq:argmin} \arg\min\big\{d_{W_p}(\mu,\xi)\,\big|\,\xi\in\mathcal{W}_p(\mathbb{R}^2,\dm), \mathcal{R}(\xi)=\mathcal{R}(\mu')\big\}=\{\mu'\}.
\end{equation} 

\begin{figure}[H]
\centering
\includegraphics[width=0.7\textwidth]{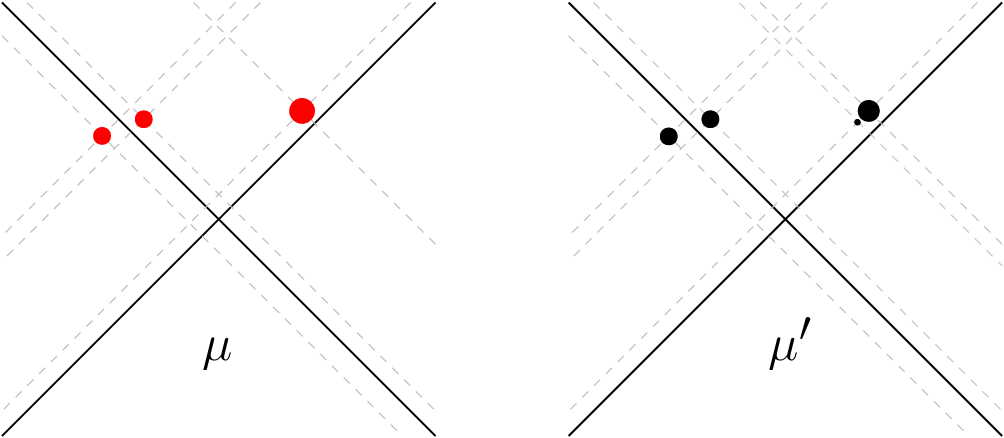}
\caption{Illustration of $\mu'$, the measure that we obtain by a sufficiently small perturbation of $\mu$.}
\label{fig:mu-and-muprime}
\end{figure}
Next,  using the existence of $z$ and $z'$ as above; by small perturbations of $\Phi(\mu)$ we will construct two measures $\nu_1'$ and $\nu_2'$ such that $\mathcal{R}(\nu_1')=\mathcal{R}(\nu_2')= \mathcal{R}(\mu')$ and  
\begin{equation} \label{eq:argmin2} \arg\min\big\{d_{W_p}(\Phi(\mu),\xi)\,\big|\,\xi\in\mathcal{W}_p(\mathbb{R}^2,\dm), \mathcal{R}(\xi)=\mathcal{R}(\mu')\big\} \supseteq \{\nu_1', \nu_2'\}.
\end{equation} 
    \begin{figure}[H]
\centering

\includegraphics[width=0.7\textwidth]{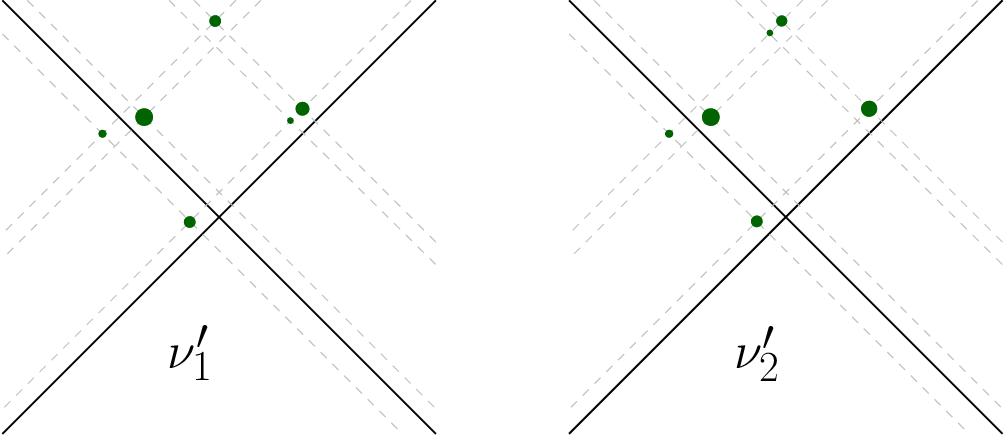}
\caption{Illustration for $\nu_1'$ and $\nu_2'$ - the two measures that we obtained by sufficiently small perturbations of $\Phi(\mu)$.}
\label{fig:nu1prime-nu2prime}
\end{figure}
Finally, $d_{W_p}(\mu,\Phi^{-1}(\nu_1'))=d_{W_p}(\mu,\mu')=d_{W_p}(\mu,\Phi^{-1}(\nu_2'))$ contradicts the fact that $\mu'$ is the unique minimizer. This contradiction guarantees that $\Phi(\mu)\in\mathcal{F}$.
   \begin{figure}[H]
   \centering

\includegraphics[width=0.7\textwidth]{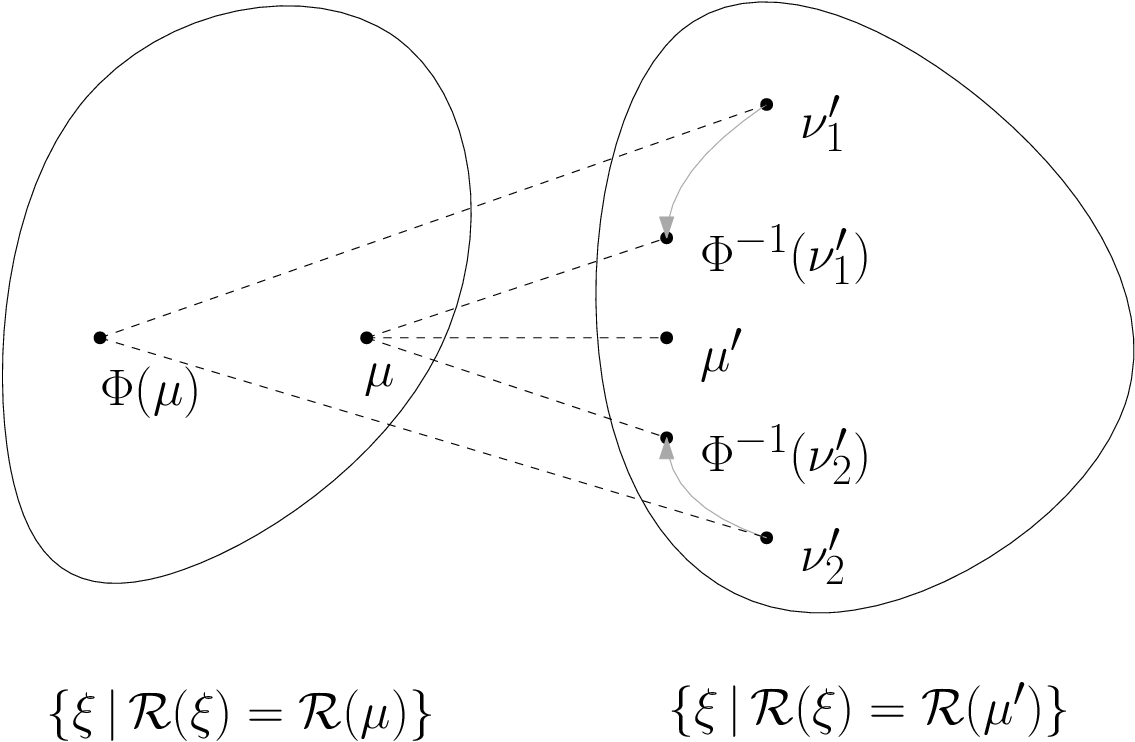}
\caption{Illustration of the final step leading to a contradiction. Dashed lines represent equal distances.}
\label{fig:krumpli}
\end{figure}

After this brief sketch of the proof, we turn to the details. 

If $\Phi(\mu) \not\in \mathcal{F}$, then there exist two points $z, z' \in \supp (\Phi(\mu))$ such that their projections onto either $L_+$ or $L_-$ coincide. Indeed, if there are no such points, then all points of the support of $\Phi(\mu)$ project to different points of $L_+$ and $L_-$. Since $${P_{L_-}}_{\#}(\Phi(\mu)) =  \sum_{i=1}^N a_i \delta_{P_{L_-}(x_i)}, \ \ {P_{L_+}}_{\#}(\Phi(\mu)) =  \sum_{i=1}^N a_i \delta_{P_{L_+}(x_i)}$$ and $a_i\ne a_j$ ($1\le i\ne j\le N$), this implies that $\Phi(\mu)=\sum_{i=1}^N a_i \delta_{x_i}=\mu$, which leads to a contradiction. Without loss of generality, we can assume that this common projection is $P_{L_+}(x_1)\in L_+$, i.e., $P_{L_+}(z)=P_{L_+}(z')=P_{L_+}(x_1)$, and for some $1\le j_1\ne j_2\le N$ we have $z=z_{1,j_1}, z'=z_{1,j_2}$ and $a_{1,j_1}>0, a_{1,j_2}>0$. Using this observation, we construct the measures $\mu',\nu_1',\nu_2'$ as follows. We take a point $x'\in L_+$ such that $c_0:=d_m(P_{L_+}(x_1),x')<c/2$.  Let us denote the elements of $$\Big((P_{L_+})^{-1}( \{ P_{L_+}(x') \} )\Big) \cap \Big((P_{L_-})^{-1}( \{ P_{L_-}(x_1), \ldots, P_{L_-}(x_N) \} )\Big)$$ by 
$z_{0,j}$ ($1\le j\le N$) so that $P_{L_+}(z_{0,j})=x'$ and $P_{L_-}(z_{0,j})=P_{L_-}(x_j)$. We will also use the notation $x_0=z_{0,1}$.
For every $0\le i'\le N,1\le j,j'\le N$  $$d_m(z_{0,j},z_{1,j})< d_m(z_{i',j'},z_{1,j})$$
if  $(i',j')\notin\{ (0,j),(1,j)\}$. To see this, observe that by construction, $d_m(z_{0,j},z_{1,j})= c_0$. If $i'\ne 0$ and $(i',j')\neq(1,j)$, then we have $d_m(z_{i',j'},z_{1,j})>c$ by definition. If $i'=0$ and $j'\ne j$, then using the reverse triangle inequality, we have
    $d_m(z_{0,j'}, z_{1,j})\ge d_m(z_{1,j'},z_{1,j})-d_m(z_{0,j'}, z_{1,j'})\ge c- c_0> c_0.$
Let us fix a weight $a$ satisfying $0<a< \min\{a_{1,j_1}, a_{1,j_2}\} <a_1$. 
Now, we consider the following measures
$$\mu'= a\delta_{x_0}+(a_1-a)\delta_{x_1}+\sum_{i=2}^N a_i \delta_{x_i},$$
$$\nu_1'= a\delta_{z_{0,j_1}}+(a_{1,j_1}-a)\delta_{z_{1,j_1}}+a_{1,j_2}\delta_{z_{1,j_2}}+\sum_{j=1, \\ j\ne j_1,j_2}^N a_{1,j}\delta_{z_{1,j}}+\sum_{i=2}^{N}\sum_{j=1}^N a_{i,j} \delta_{z_{i,j}},$$ 
$$\nu_2'= a\delta_{z_{0,j_2}}+a_{1,j_1}\delta_{z_{1,j_1}}+(a_{1,j_2}-a)\delta_{z_{1,j_2}}+\sum_{j=1, \\ j\ne j_1,j_2}^N a_{1,j}\delta_{z_{1,j}}+\sum_{i=2}^{N}\sum_{j=1}^N a_{i,j} \delta_{z_{i,j}}.$$
Obviously, $\mu', \nu_1'$, and $\nu_2'$ are probability measures satisfying $\mathcal{R}(\mu')=\mathcal{R}(\nu_1')=\mathcal{R}(\nu_2')=:\mathcal{R'}$, namely 
$$
\mathcal{R'}=\left( a\delta_{x'}+(a_1-a)\delta_{P_{L_+}(x_1)} + \sum_{i=2}^N a_i \delta_{P_{L_+}(x_i)}, \sum_{i=1}^N a_i \delta_{P_{L_-}(x_i)}\right).
$$
Our next step is to prove that: 
\begin{equation} \label{eq:meas-proj}
 d_{W_p}(\mu, \mu')=d_{W_p}(\Phi(\mu), \nu'_1)=d_{W_p}(\Phi(\mu),\nu'_2)=a^{\frac{1}{p}} c_0,  
\end{equation}
moreover, $\mu'$ satisfies the following uniqueness property: 
\begin{equation}\label{eq: uniq-prop} 
 \text{if} \  d_{W_p}(\mu, \xi)=a^{\frac{1}{p}} c_0, \ \text{and} \ \mathcal{R}(\xi)=\mathcal{R}' \ \text{then} \  \xi=\mu'.  
\end{equation}
Equations \eqref{eq:meas-proj} and \eqref{eq: uniq-prop} together will justify relations \eqref{eq:argmin} and \eqref{eq:argmin2}.

In order to show \eqref{eq:meas-proj} note that if $\xi_1,\xi_2$ are finitely supported probability measures with supports in a discrete set $P$, then $$d_{W_p}(\xi_1,\xi_2)=\min_{\Pi\in C(\xi_1, \xi_2)}\big(\sum_{(u,v)\in P \times P}d_m^p(u,v)\cdot \Pi(u,v)\big)^{\frac{1}{p}}.$$ 
The proof of each of the equations in \eqref{eq:meas-proj} is similar, therefore we will only prove the equality $d_{W_p}(\mu, \mu')=a^{\frac{1}{p}} c_0$. Notice first, that since every transport plan  must move a total weight of at least $a$ to $x_1$ from the support points of $\mu'$, we have that 
$$d_{W_p}(\mu,\mu')\ge \big( \sum_{i=0}^N d_m^p(x_i,x_1)\Pi(x_i, x_1)\big)^{\frac{1}{p}}\ge (d_m^p(x_0,x_1)a)^{\frac{1}{p}}=a^{\frac{1}{p}} c_0,$$ since $d_m^p(x_i,x_1)\ge d_m^p(x_0,x_1)$ and $\sum_{i=0}^N \Pi(x_i,x_1)\ge a$. On the other hand, if we move weight $a$ directly from $x_0$ to $x_1$, we exactly get that the cost of this transport plan is $a^{\frac{1}{p}} c_0$. 
Now we turn to the proof of \eqref{eq: uniq-prop}. Let us suppose that we have a probability measure $\xi$ with $\mathcal{R}(\xi)=\mathcal{R}'$. Then $\xi$ can be written in the form 
$$\xi=\sum_{i=0}^N\sum_{j=1}^N b_{i,j}\delta_{z_{i,j}},$$
such that $$\sum_{j=1}^N b_{0,j}=a,\ \ \sum_{j=1}^N b_{1,j}=a_1-a, \ \ \sum_{j=1}^N b_{i,j}=a_i \ (2\le i\le N),\ \ \sum_{i=0}^N b_{i,j}=a_j.$$
Again, every transport plan must move a total weight of at least $a$ to $x_1$ from the support points of $\xi$. Hence, we get that again
\begin{equation}\label{eqaeta}
d_{W_p}(\xi, \mu)\ge  \big( \sum_{i=0}^N\sum_{j=1}^N d_m^p(z_{i,j},x_1)\Pi(z_{i,j}, x_1)\big)^{\frac{1}{p}}\ge (d_m^p(x_0,x_1)a)^{\frac{1}{p}}=a^{\frac{1}{p}} c_0.
\end{equation}
Let us recall that $d_m(x_0,x_1)<d_m(z_{i,j},x_1)$, if $(i,j)\notin\{(0,1),(1,1)\}$. (Note that $z_{0,1}=x_0$, $z_{1,1}=x_1$.) Therefore, equality holds in \eqref{eqaeta} if and only if all transport occurs between $x_0$ and $x_1$ with weight $a$. This implies that $\xi-a\delta_{x_0}=\mu-a\delta_{x_1}$ and hence $\xi=\mu'$.

In the last step, we show that the existence of $\nu_1'\neq\nu_2'$ implies that $\mu'$ is not a unique minimizer in relation \eqref{eq:argmin}.  Indeed, since $\Phi^{-1}$ is an isometry preserving measures supported on $L_+$ and $L_-$ we have by Lemma \ref{commutation} that $\mathcal{R}(\Phi^{-1}(\nu_1'))= \mathcal{R}(\Phi^{-1}(\nu_2'))= \mathcal{R}'$. Furthermore, according to \eqref{eq:meas-proj}, we have
$$d_{W_p} (\mu, \Phi^{-1}(\nu'_1))=d_{W_p} (\Phi(\mu), \nu'_1)=a^{\frac{1}{p}} c_0=d_{W_p}(\mu,\mu'),$$
and similarly,
$$d_{W_p} (\mu, \Phi^{-1}(\nu'_2))=d_{W_p} (\Phi(\mu), \nu'_2)=a^{\frac{1}{p}} c_0=d_{W_p}(\mu,\mu').$$
Since $\Phi^{-1}(\nu_1')\neq\Phi^{-1}(\nu_2')$, this is a contradiction.
\end{proof}

\section{Proof of the main result}\label{s: main}

According to Proposition \ref{Diagonal-Rigidity}, it is enough to show that the Wasserstein space $\wpx$ is diagonally rigid. The proof of this fact is divided into four parts according to the choice of $X= \R^2$ or $X= Q$ and $p=1$ or $p>1$.

\subsection{Diagonal rigidity of $\wort$}

In this subsection, we deal with the case $p=1$ and show that $\wort$ is diagonally rigid. That is, we show that if $\Phi: \wort \to \wort$ is an isometry, then $\Phi(\mu)=\mu$ for all $\mu \in \mathcal{W}_1(L_{+}) \cup \mathcal{W}_1(L_{-})$ --- up to a trivial isometry induced by an isometry of the underlying space $\R^2.$

We recall the  slightly more general notion than $L_+$ and $L_-$ of diagonal lines  by calling  $L \subset \R^2$ a \emph{diagonal line} if
\begin{equation}
L= L_{\varepsilon, a} =\left\{ (x_1, x_2) \in \R^2 \, \middle| \, x_2= \varepsilon x_1 +a \right\} \text{ for some } \varepsilon \in \{-1,1\} \text{ and } a \in \R.
\end{equation}
Observe that these lines coincide with the set of images of $L_+$ by the isometry group of $(\R^2,d_m).$ 
The following proposition is a metric characterization of those elements of $\wort$ that are supported on a diagonal line. Let us note that this statement plays the same role as Lemma 3.5 in \cite{GTV2}, where Dirac masses were characterized in a similar way in Wasserstein spaces over a Hilbert space. In this sense, diagonally supported measures in our space have the same metric property as Dirac masses in the case of Hilbert spaces.

\begin{proposition} \label{prop:diag-line-supp-char}
Let $\mu \in \wort.$ The following statements are equivalent.
\begin{enumerate}
    \item[(i)] \label{crit:A} $\mu$ is supported on a diagonal line $L_{\varepsilon, a} \subset \R^2.$
    \item[(ii)] \label{crit:B} For every $\nu \in \wort$ there exists an $\eta \in \wort$ such that
    \begin{equation} \label{eq:char-eq}
    d_{W_1}(\mu, \nu)=d_{W_1}(\nu, \eta)=\frac{1}{2}d_{W_1}(\mu, \eta).
    \end{equation}
    In words, this item means that $\mu$ admits a
symmetrical with respect to every other measure.
\end{enumerate}
\end{proposition}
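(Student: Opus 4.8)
The plan is to prove the two implications separately, and it is convenient to use the isometric identification of $(\R^2,\dm)$ with the taxicab plane. Concretely, the linear map $\Psi(x_1,x_2)=(x_1+x_2,x_1-x_2)$ satisfies $\|\Psi(x)-\Psi(y)\|_1=2\dm(x,y)$, so it is, up to the harmless factor $2$, an isometry of $(\R^2,\dm)$ onto $(\R^2,\|\cdot\|_1)$ carrying the diagonals $L_+,L_-$ onto the two coordinate axes. Since property (ii) is invariant under isometries of $\wort$ and the isometry group of $(\R^2,\dm)$ acts transitively on diagonal lines, for (i)$\Rightarrow$(ii) I may assume $\supp(\mu)\subseteq L_+$; in the $\Psi$-picture this means the transformed measure has product form $\mu_1\otimes\delta_0$, i.e. its transverse coordinate is a Dirac mass. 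The key consequence is that for such a measure the taxicab $1$-Wasserstein distance decouples exactly, $d_{W_1}(\mu,\nu)=W_1(\mu_1,\nu_1)+W_1(\delta_0,\nu_2)$ for every $\nu$, where $W_1$ denotes the one-dimensional Wasserstein distance in the respective coordinate (the transverse term is coupling-independent precisely because $\mu$ sits entirely on $\{x_2=0\}$).

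For (i)$\Rightarrow$(ii) I would then construct the symmetric partner $\eta$ \emph{not} by the naive central reflection $y\mapsto 2y-x$ (which, as small two-point examples show, generally fails to double the distance), but by pushing $\nu$ away from the diagonal in the transverse direction. Working in the $\Psi$-picture, take $\eta$ to be the image of $\nu$ under the coordinate scaling $(y_1,y_2)\mapsto(y_1,\lambda y_2)$, keeping the along-diagonal coordinate fixed. Using the decoupling (and, for $d_{W_1}(\nu,\eta)$, an explicit coupling together with the decoupling lower bound) one gets
\begin{equation*}
d_{W_1}(\mu,\nu)=W_1(\mu_1,\nu_1)+W_1(\delta_0,\nu_2),\quad d_{W_1}(\nu,\eta)=(\lambda-1)\,W_1(\delta_0,\nu_2),\quad d_{W_1}(\mu,\eta)=W_1(\mu_1,\nu_1)+\lambda\,W_1(\delta_0,\nu_2),
\end{equation*}
so that the choice $\lambda=2+W_1(\mu_1,\nu_1)/W_1(\delta_0,\nu_2)$ makes the first two quantities equal and both equal to half of the third; hence $\nu$ is the required metric midpoint. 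The degenerate case $\nu_2=\delta_0$ (that is, $\nu$ supported on the same diagonal) is handled by the same idea, moving $\nu$ a fixed distance into the transverse coordinate so that the transverse half-distance matches $W_1(\mu_1,\nu_1)$.

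For the converse I argue by contraposition. If $\mu$ is not supported on any diagonal line, then $\supp(\mu)$ contains two points $x_a=(x_{a,1},x_{a,2})$ and $x_b=(x_{b,1},x_{b,2})$ lying on no common diagonal, i.e. $|x_{a,1}-x_{b,1}|\neq|x_{a,2}-x_{b,2}|$; let $c$ be their midpoint and take $\nu=\delta_c$. Suppose some $\eta$ makes $\delta_c$ a midpoint of $\mu$ and $\eta$. Gluing the (trivial) optimal plans through the Dirac mass $\delta_c$ shows that the independent coupling $\mu\otimes\eta$ is optimal and forces $\dm(x,c)+\dm(c,w)=\dm(x,w)$ for every $x\in\supp(\mu)$ and $w\in\supp(\eta)$, since the set where this closed relation holds has full $\mu\otimes\eta$-measure and therefore contains the whole product $\supp(\mu)\times\supp(\eta)$. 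In other words, every $w\in\supp(\eta)$ lies on a $\dm$-geodesic through $c$ directed away from each of $x_a$ and $x_b$. A direct computation with the maximum norm shows that, for a single coordinate-dominant base point, this set is a $45^\circ$ cone with apex $c$ opening opposite to that base point, and that for $x_a,x_b$ separated by a coordinate-dominant vector the two cones meet only in $\{c\}$. Hence $\supp(\eta)=\{c\}$, i.e. $\eta=\delta_c=\nu$, contradicting $d_{W_1}(\nu,\eta)=d_{W_1}(\mu,\nu)>0$.

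I expect the main obstacle to be this converse direction, as it is a universal non-existence statement: one must rule out \emph{every} candidate partner $\eta$. The technical heart is the geometry of $\dm$-geodesics through a point — making rigorous that a Dirac midpoint forces the pointwise betweenness relation on the full product of supports, identifying the admissible region for $\supp(\eta)$ as an intersection of cones, and verifying that this intersection collapses to a single point exactly when the two base points fail to share a diagonal. By contrast, the forward direction is essentially the single observation that one must extend transversally to the diagonal, after which the product structure makes all three distances explicitly computable.
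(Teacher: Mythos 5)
Your proof is correct, and the converse direction coincides with the paper's own argument: the same choice $\nu=\delta_c$ with $c$ the midpoint of two support points not sharing a diagonal, the same saturation of the triangle inequality on $\supp(\mu)\times\supp(\eta)$ (the paper phrases the product coupling bound directly rather than via gluing, but it is the same computation), and the same geometric collapse of the admissible region for $\supp(\eta)$ to $\{c\}$ — the paper states this as eq.~\eqref{eq:tri-not-sat} and a picture, while you make the two $45^\circ$ cones explicit, which is if anything slightly more rigorous. The forward direction is where you genuinely diverge. The paper defines an ``allocation of directions'' $e(y)=\pm(\mp\varepsilon,1)$ and pushes $\nu$ by the \emph{uniform} translation $y\mapsto y+te(y)$, the key identity being $\dm(x,y+te(y))=\dm(x,y)+t$ for all $x\in L$; you instead pass to the $\ell^1$ picture via $(x_1,x_2)\mapsto(x_1+x_2,x_1-x_2)$, observe that the cost decouples when $\mu$ sits on a coordinate axis, and apply a transverse \emph{dilation} $(y_1,y_2)\mapsto(y_1,\lambda y_2)$ with $\lambda$ tuned to the ratio $W_1(\mu_1,\nu_1)/W_1(\delta_0,\nu_2)$ (plus the separate degenerate case $\nu_2=\delta_0$, which the paper's uniform push handles without case distinction). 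Both constructions exploit the same additivity of the distance-to-the-line under transverse motion, but they produce different witnesses $\eta$, and your decoupling identity $d_{W_1}(\mu,\nu)=W_1(\mu_1,\nu_1)+W_1(\delta_0,\nu_2)$ is a clean, reusable statement that the paper never isolates; its price is the small amount of bookkeeping needed to justify that the infimum over couplings splits (liftings via disintegration) and the extra case analysis, whereas the paper's single formula \eqref{eq:d-w1-mu-eta-t} covers all $\nu$ at once.
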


\begin{proof}
We prove the direction (i) $\Longrightarrow$ (ii) first. Let $\varepsilon \in \{-1,1\}$ and $a \in \R$ be fixed, let $\mu \in \wort$ such that 
$$
\mathrm{supp}(\mu) \subset L=L_{\varepsilon, a}=\left\{ (x_1, x_2) \in \R^2 \, \middle| \, x_2= \varepsilon x_1 +a \right\}.
$$ 
Let us construct the following map, which we will call the allocation of directions in the sequel:
\begin{equation} \label{eq:direction-alloc}
e: \R^2 \rightarrow \R^2; \, (y_1,y_2) \mapsto e((y_1,y_2)):=
\begin{cases}
(-\varepsilon, 1) & \text{if } y_2 \geq \varepsilon y_1+a \\
(\varepsilon, -1) & \text{if } y_2 < \varepsilon y_1+a    
\end{cases}
\end{equation}
See Figure \ref{fig:direction-alloc} for an illustration the map given above.
\begin{figure}[H]
\centering

\includegraphics[width=0.7\textwidth]{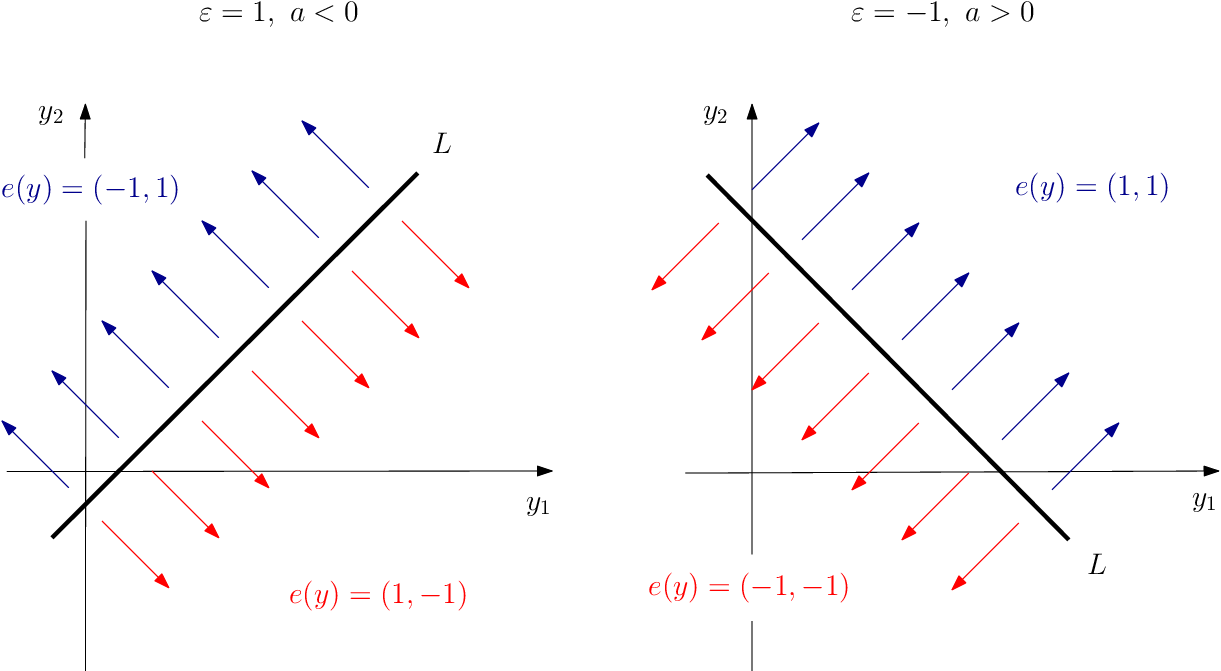}
\caption{The allocation of directions in $\R^2$ according to \eqref{eq:direction-alloc}}
\label{fig:direction-alloc}
\end{figure}
The above allocation of directions has the crucial property that for all $x=(x_1,x_2) \in L,$ for all $y=(y_1,y_2) \in \R^2,$ and for all $t \geq 0$ we have
\begin{equation} \label{eq:crucial-0}
\dm(x,y+t e(y))=\dm(x,y)+\dm(y, y+t e(y))=\dm(x,y)+t.
\end{equation}
Let us justify \eqref{eq:crucial-0} only in the sub-case $\varepsilon=1$ and $y_2 \geq \varepsilon y_1 +a$ as the other three sub-cases are very similar. We know that $x_2=x_1+a$ and $y_2 \geq y_1+a$ which implies that $y_2-x_2 \geq y_1 - x_1,$ or equivalently, $x_1-y_1 \geq x_2 - y_2.$ Therefore, 
\begin{equation} \label{eq:dm-exp-1}
\begin{split}
\dm(x,y)&=\dm((x_1,x_2),(y_1,y_2))\\
&=\max\{x_1-y_1, y_1-x_1,x_2-y_2,y_2-x_2\}\\
&=\max\{x_1-y_1,y_2-x_2\}.
\end{split}
\end{equation}
Moreover,
\begin{equation}\label{eq:dm-exp-2}
\begin{split}
\dm(x, y+t e(y))
&=\dm((x_1,x_2), (y_1,y_2)+t(-1,1))\\
&=\dm((x_1,x_2), (y_1-t,y_2+t))\\
&=\max\{x_1-y_1+t, y_1-x_1-t, y_2-x_2+t, x_2-y_2-t\}\\
&=\max\{x_1-y_1+t, y_2-x_2+t\}\\
&=\max\{x_1-y_1, y_2-x_2\}+t.
\end{split}
\end{equation}
That is, \eqref{eq:dm-exp-1} and \eqref{eq:dm-exp-2} shows that $\dm(x, y+t e(y))=\dm(x,y)+t$ indeed, and it is clear by the definition \eqref{eq:direction-alloc} that $\dm(y, y+ t e(y))=t$ for every non-negative $t.$
\par
It is a straightforward consequence of the definition of $e(y)$ --- see eq. \eqref{eq:direction-alloc} --- that for every $t \geq 0$ the map $y \mapsto y+t e(y)$ is an injection of $\R^2$ and hence invertible on its range.
\par
Let $\nu \in \wort$ and let $t_0:=d_{W_1}(\mu, \nu).$ Let us define
\begin{equation} \label{eq:eta-t-def}
\eta_t:=\left(y \mapsto y + t e(y) \right)_{\#} \nu 
\end{equation}
for $t \geq 0.$ As the map $y \mapsto y+t e(y)$ is invertible, the couplings of $\mu$ and $\nu$ are in a one-by-one correspondence with the couplings of $\mu$ and $\eta_t$ (for every $t\geq 0$), and this correspondence is given by
\begin{equation} \label{eq:1-by-1-corres}
\pi_{(\mu, \eta_{t})}=\left(\mathrm{id}_{\R^2} \times \left(y \mapsto y+ t e(y) \right)\right)_{\#}\pi_{(\mu, \nu)} \qquad (\pi_{(\mu, \nu)} \in C(\mu,\nu), \, \pi_{(\mu, \eta_{t})} \in C(\mu, \eta_t)).
\end{equation}
Therefore,
\begin{equation}\label{eq:d-w1-mu-eta-t}
\begin{split}
d_{W_1}(\mu,\eta_t)
&=\inf \left\{ \iint_{\R^2 \times \R^2} \dm(x,z) \dd \pi_{(\mu, \eta_{t})} (x,z) \, \middle| \, \pi_{(\mu, \eta_{t})} \in C(\mu, \eta_t) \right\}\\
&=\inf \left\{ \iint_{\R^2 \times \R^2} \dm(x,y+t e(y)) \dd \pi_{(\mu, \nu)} (x,y) \, \middle| \, \pi_{(\mu, \nu)} \in C(\mu, \nu) \right\}\\
&=\inf \left\{ \iint_{\R^2 \times \R^2} (\dm(x,y)+t) \dd \pi_{(\mu, \nu)} (x,y) \, \middle| \, \pi_{(\mu, \nu)} \in C(\mu, \nu) \right\}\\
&=\inf \left\{ \iint_{\R^2 \times \R^2} \dm(x,y) \dd \pi_{(\mu, \nu)} (x,y) \, \middle| \, \pi_{(\mu, \nu)} \in C(\mu, \nu) \right\}+t\\
&=d_{W_1}(\mu, \nu)+t=t_0+t
\end{split}
\end{equation}
for every $t \geq 0.$

Note, that in the above computation, we heavily relied on the identity \eqref{eq:crucial-0}. The reversed triangle inequality implies that 
$$
d_{W_1}(\nu, \eta_t) \geq \left| d_{W_1}(\mu, \eta_t)- d_{W_1}(\mu, \nu)\right|=(t_0+t)-t_0=t.
$$
On the other hand, the cost of the coupling $\left(y \mapsto (y, y+ t e(y))\right)_{\#} \nu \in C(\nu, \eta_t)$ is simply $t,$ and hence $d_{W_1}(\nu, \eta_t)=t.$ Therefore, with the particular choice $t:=t_0=d_{W_1}(\mu,\nu)$ the triple $(\mu, \nu, \eta_{t_0})$ satisfies the requirement 
\begin{equation} \label{eq:req-v2}
d_{W_1}(\mu, \nu)=d_{W_1}(\nu, \eta_{t_0})=\frac{1}{2}d_{W_1}(\mu,\eta_{t_0})
\end{equation}
as every expression in \eqref{eq:req-v2} is equal to $t_0.$
\par
We turn to the proof of the direction (ii) $\Longrightarrow$ (i). The assumption (ii) implies in particular that for every $y \in \R^2$ there exists an $\eta \in \wort$ such that
\begin{equation} \label{eq:char-eq-part}
d_{W_1}(\mu, \delta_y)=d_{W_1}(\delta_y, \eta)=\frac{1}{2}d_{W_1}(\mu, \eta).
\end{equation}
Note that 
$$
d_{W_1}(\mu, \delta_y)=\int\limits_{\R^2} \dm(x,y) \dd \mu(x) \text{ and } d_{W_1}(\delta_y,\eta)=\int\limits_{\R^2} \dm(y,z) \dd \eta(z).
$$
Moreover, let $\pi_{(\mu,\eta)}^{*}$ denote an optimal coupling of $\mu$ and $\eta,$ and let us note that we have the following chain of inequalities:
\begin{equation} \label{eq:chain}
\begin{split}
d_{W_1}(\mu, \eta)
&=\iint_{\R^2 \times \R^2} \dm(x,z) \dd \pi_{(\mu,\eta)}^{*}(x,z)\\
&\leq \iint_{\R^2 \times \R^2} \dm(x,z) \dd (\mu \otimes \eta)(x,z)\\
&\leq \iint_{\R^2 \times \R^2} (\dm(x,y)+\dm(y,z)) \dd \mu(x) \dd \eta(z)\\
&=d_{W_1}(\mu, \delta_y)+ d_{W_1}(\delta_y,\eta).
\end{split}
\end{equation}
Therefore, \eqref{eq:char-eq-part} implies that both inequalities of \eqref{eq:chain} are saturated. The saturation of the first inequality means that $\mu \otimes \eta$ is an optimal coupling of $\mu$ and $\nu$ with respect to the transport cost $c(x,y)=\dm(x,y),$ while the saturation of the second inequality means that
\begin{equation} \label{eq:tri-sat}
\dm(x,z)=\dm(x,y)+\dm(y,z) \text{ for } \mu \otimes \eta \text{-almost every } (x,z) \in \R^2 \times \R^2. 
\end{equation}
In order to get a contradiction, assume that $\mu$ is not supported on a diagonal line, and let $x$ and $x'$ be points of the support of $\mu$ that do not lie on a common diagonal line. Now let us choose $y$ to be $y:=\frac{1}{2}(x+x').$
With this choice we get
\begin{equation} \label{eq:tri-not-sat}
\dm(x,z)<\dm(x,y)+\dm(y,z) \text{ or } \dm(x',z)<\dm(x',y)+\dm(y,z) \text{ for all } z \in \R^2 \setminus\{y\}.
\end{equation}
Indeed, it is easy to check --- see also Figure \ref{fig:tri-not-sat} --- that if both triangle inequalities in \eqref{eq:tri-not-sat} are saturated, then $z=y$ by necessity.
\begin{figure}[H]
\centering

\includegraphics[width=0.65\textwidth]{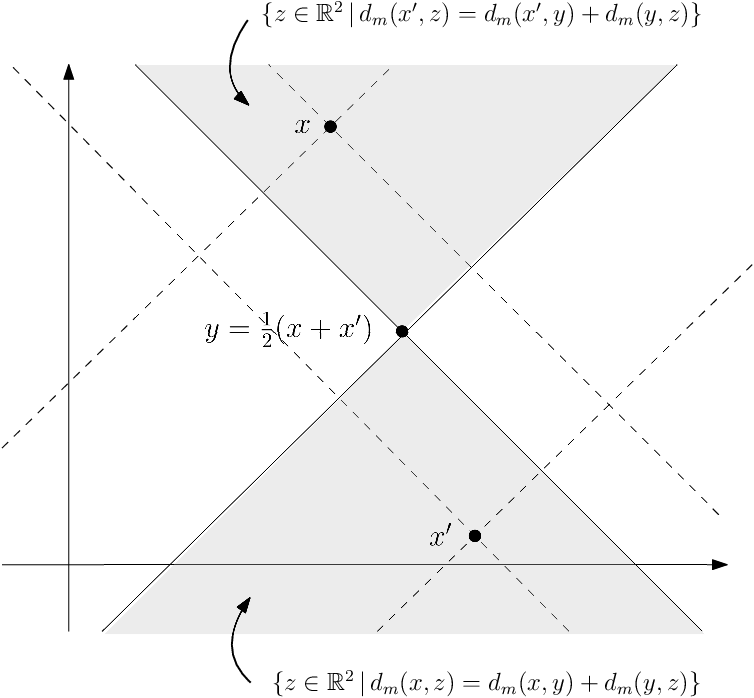}
\caption{Illustration for eq. \eqref{eq:tri-not-sat}}
\label{fig:tri-not-sat}
\end{figure}
Consequently, \eqref{eq:tri-sat} forces $\eta$ to be $\eta=\delta_y.$ But then $d_{W_1}(\delta_y,\eta)=0,$ which contradicts to \eqref{eq:char-eq-part}, because $d_{W_1}(\mu, \delta_y)>0$ as $\mu$ is not diagonally supported and hence not a Dirac. This contradiction completes the proof of the implication (ii) $\Longrightarrow$ (i).
\end{proof}

Now we give a metric characterization of the property that two measures $\mu_1$ and $\mu_2$ are supported on the {\it same}  diagonal line.

\begin{proposition} \label{prop:same-diag-line-supp-char}
Let $\mu_1, \mu_2 \in \wort.$ The following statements are equivalent.
\begin{enumerate}
    \item[(i)] \label{crit:i} $\mu_1$ and $\mu_2$ are supported on the same diagonal line.
    \item[(ii)] \label{crit:ii} For every $\nu \in \wort$ there exists an $\eta \in \wort$ such that
    \begin{equation} \label{eq:char-eq-2}
    d_{W_1}(\mu_i, \eta)=d_{W_1}(\mu_i, \nu)+d_{W_1}(\nu, \eta) \text{ for } i=1,2 \text{ and } d_{W_1}(\nu, \eta)=1.
    \end{equation}    In words, this item means that there is a measure $\eta$ aligned with both $(\mu_1, \nu)$ and $(\mu_2, \nu)$.
\end{enumerate}
\end{proposition}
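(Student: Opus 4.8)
The plan is to prove the two implications separately, reusing the machinery built for Proposition \ref{prop:diag-line-supp-char}. For the direction (i) $\Longrightarrow$ (ii), suppose $\supp(\mu_1)$ and $\supp(\mu_2)$ both lie on the common diagonal $L=L_{\varepsilon,a}$. I would use the \emph{single} allocation of directions $e=e_L$ from \eqref{eq:direction-alloc} and, given $\nu$, define $\eta:=\left(y\mapsto y+e(y)\right)_{\#}\nu$, i.e. the $t=1$ instance of \eqref{eq:eta-t-def}. Since the crucial identity \eqref{eq:crucial-0} holds for \emph{every} $x\in L$, the computation \eqref{eq:d-w1-mu-eta-t} applies verbatim to both measures, giving $d_{W_1}(\mu_i,\eta)=d_{W_1}(\mu_i,\nu)+1$ for $i=1,2$, while the diagonal coupling $\left(y\mapsto(y,y+e(y))\right)_{\#}\nu$ together with the reversed triangle inequality yields $d_{W_1}(\nu,\eta)=1$. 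These equalities are exactly \eqref{eq:char-eq-2}. The decisive point, and the only genuine change compared with the one-measure statement, is that $e_L$ depends on $L$ alone and not on the measure, so a single $\eta$ is simultaneously aligned with $(\mu_1,\nu)$ and $(\mu_2,\nu)$.

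For (ii) $\Longrightarrow$ (i) I would argue by contraposition, so assume $S:=\supp(\mu_1)\cup\supp(\mu_2)$ is not contained in a single diagonal line. First I would record an elementary planar lemma: three points that are pairwise on common diagonals are collinear on one diagonal. Indeed, writing $B-A=s(1,\sigma)$ and $C-A=u(1,\tau)$ with $\sigma,\tau\in\{-1,1\}$ and $s,u\neq0$, the pair $B,C$ is diagonal iff $|s-u|=|s\sigma-u\tau|$; when $\sigma=-\tau$ this forces $su=0$, a contradiction, so necessarily $\sigma=\tau$ and the three points are collinear. Consequently a set lies on one diagonal iff all its pairs do, and therefore there exist $x^{*},x'^{*}\in S$ with $|x^{*}_1-x'^{*}_1|\neq|x^{*}_2-x'^{*}_2|$ (not on a common diagonal). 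Next, exactly as in the $\nu=\delta_y$ argument of Proposition \ref{prop:diag-line-supp-char}, any $\eta$ realizing \eqref{eq:char-eq-2} with $\nu=\delta_y$ makes the chain \eqref{eq:chain} saturate, forcing $\dm(x,z)=\dm(x,y)+\dm(y,z)$ for $\mu_i\otimes\eta$-almost every $(x,z)$ and both $i$. Since for fixed $y,z$ the set $F_z:=\{x\mid \dm(x,z)=\dm(x,y)+\dm(y,z)\}$ is closed, $\mu_i(F_z)=1$ gives $\supp(\mu_i)\subseteq F_z$; hence for $\eta$-a.e. $z$ one has $z\in B(x,y)$ for \emph{every} $x\in S$, where $B(x,y):=\{z\mid \dm(x,z)=\dm(x,y)+\dm(y,z)\}$ is the cone of points for which $y$ lies on a geodesic from $x$ to $z$.

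The heart of the argument, and the step I expect to be the main obstacle, is the $\ell^\infty$ geometry: I claim one can choose $y$ so that $B(x^{*},y)$ and $B(x'^{*},y)$ are \emph{opposite} $90^\circ$ quadrant cones, whence $B(x^{*},y)\cap B(x'^{*},y)=\{y\}$. Concretely, when coordinate $1$ strictly dominates $x^{*}-x'^{*}$, say $x^{*}_1<x'^{*}_1$, the inequalities $y_1-x^{*}_1>|y_2-x^{*}_2|$ and $x'^{*}_1-y_1>|y_2-x'^{*}_2|$ are simultaneously solvable precisely because $|x'^{*}_1-x^{*}_1|>|x^{*}_2-x'^{*}_2|$ (choose $y_2$ between $x^{*}_2$ and $x'^{*}_2$, then $y_1$ in the resulting nonempty interval); these make $B(x^{*},y)$ the cone $\{w_1\ge|w_2|\}$ opening east and $B(x'^{*},y)$ the cone $\{w_1\le-|w_2|\}$ opening west, which intersect only at $y$. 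The case where coordinate $2$ dominates is symmetric. With such a $y$ and $\nu=\delta_y$, the support constraint of the previous paragraph forces $z=y$ for $\eta$-a.e. $z$, i.e. $\eta=\delta_y$, so $d_{W_1}(\nu,\eta)=0\neq1$, contradicting \eqref{eq:char-eq-2}. This shows that (ii) fails whenever $S$ is not contained in a single diagonal, completing the contrapositive. The genuinely branching-geometric content is thus the cone description of $B(x,y)$ together with the fact that opposite quadrant cones meet only at their apex, which is exactly where the non-uniqueness of geodesics for the maximum metric is exploited.
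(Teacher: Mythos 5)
Your proposal is correct and, at its core, follows the same route as the paper's proof: the forward direction is identical (push $\nu$ forward by $y\mapsto y+e(y)$ for the single direction field \eqref{eq:direction-alloc} attached to the common diagonal, and use \eqref{eq:crucial-0}), and the backward direction uses the same mechanism of taking $\nu=\delta_y$ with $y$ placed between two non-diagonally-aligned support points, saturating the chain \eqref{eq:chain} for both $i=1,2$, and forcing $\eta=\delta_y$ in contradiction with $d_{W_1}(\nu,\eta)=1$. The one structural difference is how you locate the offending pair: the paper first invokes Proposition \ref{prop:diag-line-supp-char} to place each $\mu_i$ on its own diagonal and then asserts the existence of $x_1\in\supp(\mu_1)$, $x_2\in\supp(\mu_2)$ not on a common diagonal, whereas you work directly with $S=\supp(\mu_1)\cup\supp(\mu_2)$ and prove the small planar lemma that pairwise diagonal alignment of a set forces the whole set onto a single diagonal. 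Your variant is slightly more self-contained: it supplies the justification that the paper's ``hence in particular'' step leaves implicit, your explicit quadrant-cone description of $B(x,y)$ (with opposite cones meeting only at the apex) is precisely the content the paper delegates to ``easy to check'' and Figure \ref{fig:tri-not-sat}, and the Fubini step giving $\supp(\mu_i)\subseteq F_z$ for $\eta$-a.e.\ $z$ is spelled out rather than asserted. Both arguments prove the same statement by the same idea; yours simply makes the $\ell^\infty$ geometry and the measure-theoretic bookkeeping explicit.
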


\begin{proof}
Let us start with the proof of the direction (i) $\Longrightarrow$ (ii). Assume that $\mu_1$ and $\mu_2$ are supported on the diagonal line $L=\left\{(x_1, x_2) \in \R^2 \, \middle| \, x_2= \varepsilon x_1 +a\right\}.$
Let us recall the allocation of directions \eqref{eq:direction-alloc} and its crucial property \eqref{eq:crucial-0}.
Let $\eta$ be defined by
\begin{equation} \label{eq:eta-def}
\eta:=\left(y \mapsto y + e(y) \right)_{\#} \nu .
\end{equation}
Note that \eqref{eq:eta-def} is a special case of \eqref{eq:eta-t-def} with $t=1,$ and hence $d_{W_1}(\mu_i, \eta)=d_{W_1}(\mu_i, \nu)+1$ for $i=1,2.$ 

Similarly as in the previous proposition, the reverse triangle inequality ensures that $d_{W_1}(\nu, \eta) \geq \left|d_{W_1}(\mu_i,\eta)-d_{W_1}(\mu_i,\nu)\right|=1,$ and the transport map $(y \mapsto (y,y+e(y)))_{\#}\nu$ between $\nu$ and $\eta$ shows that $d_{W_1}(\nu, \eta)=1$ which completes the proof of this direction.\\

To prove the direction (ii) $\Longrightarrow$ (i), note that by the previous statement, both of the measures $\mu_1$ and $\mu_2$ are supported on some diagonal line. Assume by contradiction that $\mu_1$ and $\mu_2$ are not supported on the same diagonal line, and hence in particular there exist points $x_1 \in \mathrm{supp}(\mu_1)$ and $x_2 \in \mathrm{supp}(\mu_2)$ that do not lie on a common diagonal line. As in the proof of Proposition \ref{prop:diag-line-supp-char} let us choose $\nu:=\delta_y$ where $y=\frac{1}{2}(x_1+x_2).$ With this choice, \eqref{eq:char-eq-2} implies that 
$$
\dm(x,z)=\dm(x,y)+\dm(y,z) \text{ for } \mu_i \otimes \eta \text{-almost every } (x,z) \in \R^2 \times \R^2 \qquad (i=1,2). 
$$
In particular, $\dm(x_i,z)=\dm(x_i,y)+\dm(y,z)$ for $i=1,2$ which implies $z=y$ for $\eta$-almost every $z,$ and hence forces $\eta$ to be $\delta_y.$ However, this contradicts the requirement $d_{W_1}(\nu,\eta)=1,$ so we got the desired contradiction.
\end{proof}

Now we are in the position to prove the main result of this section.

\begin{theorem}\label{T: main-1}
  The Wasserstein space $\wort$ is diagonally rigid. That is, for any isometry 
  $\Phi: \wort \to \wort$ there exists an isometry $T: \R^2 \to \R^2$ such that $\Phi\circ T_\#$ fixes all measures supported on $L_+$ and $L_-$.
\end{theorem}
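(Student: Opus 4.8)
The plan is to use the two metric characterizations established above to push the entire incidence structure of diagonal lines through the isometry $\Phi$, thereby forcing $\Phi$ to preserve Dirac masses, and then to pin down the action on each individual diagonal by a first-moment argument.

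First I would note that both Proposition~\ref{prop:diag-line-supp-char} and Proposition~\ref{prop:same-diag-line-supp-char} characterize their respective properties purely metrically, so $\Phi$ preserves the class of diagonally supported measures as well as the relation ``supported on a common diagonal line.'' A non-Dirac measure lies on a \emph{unique} diagonal line, since two distinct diagonals meet in at most one point. Hence, fixing a non-Dirac $\mu_0 \in \mathcal{W}_1(L)$ and applying the same-line relation to the pair $(\mu_0,\mu)$ for arbitrary $\mu \in \mathcal{W}_1(L)$, one obtains a well-defined bijection $\phi$ on the set of diagonal lines with $\Phi\big(\mathcal{W}_1(L)\big)=\mathcal{W}_1(\phi(L))$ for every diagonal $L$, the inverse of $\phi$ being produced by $\Phi^{-1}$ in the same way.

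Next I would show that $\Phi$ maps Dirac masses to Dirac masses. Every $x\in\R^2$ is the unique intersection of one slope-$+1$ diagonal $L$ and one slope-$-1$ diagonal $L'$, and $\delta_x$ is the unique element of $\mathcal{W}_1(L)\cap\mathcal{W}_1(L')$. Applying $\Phi$, the image $\Phi(\delta_x)$ lies in $\mathcal{W}_1(\phi(L))\cap\mathcal{W}_1(\phi(L'))$; since this intersection is nonempty, the lines $\phi(L)$ and $\phi(L')$ cannot be parallel, so they cross in a single point and their measure-sets meet in exactly one Dirac. Thus $\Phi(\delta_x)=\delta_{\tau(x)}$ for a bijection $\tau:\R^2\to\R^2$, and the identity $d_{W_1}(\delta_x,\delta_y)=\dm(x,y)$ shows that $\tau$ is an isometry of $(\R^2,\dm)$.

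Finally I would set $T:=\tau^{-1}$ and consider $\Psi:=\Phi\circ T_{\#}$, which is an isometry fixing every Dirac mass. Because $\Psi$ fixes all Diracs, it fixes each diagonal line setwise, so $\Psi(\mathcal{W}_1(L))=\mathcal{W}_1(L)$, and for any $\mu$ supported on a diagonal $L\cong\R$ we get $d_{W_1}(\Psi(\mu),\delta_x)=d_{W_1}(\mu,\delta_x)$ for all $x\in L$. Parametrizing $L$ isometrically by $\R$, this reads $\int_\R |s-t|\dd\widetilde{\Psi(\mu)}(s)=\int_\R |s-t|\dd\tilde\mu(s)$ for all $t\in\R$; taking the distributional second derivative in $t$, which returns twice the measure, forces $\Psi(\mu)=\mu$. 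In particular $\Phi\circ T_{\#}$ fixes all measures supported on $L_+$ and $L_-$, as required. The main obstacle is the middle step---upgrading the line-level bijection $\phi$ to genuine preservation of Dirac masses---since this is precisely where the incidence geometry of the two diagonal pencils (parallel versus crossing, detected by the emptiness of the intersection of the corresponding measure-sets) has to be exploited.
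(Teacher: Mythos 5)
Your overall architecture is sound and rests on the same two pillars as the paper (Propositions~\ref{prop:diag-line-supp-char} and~\ref{prop:same-diag-line-supp-char}), but there is one genuine gap, and it sits exactly where the paper spends its effort. Your line map $\phi$ is defined by picking a non-Dirac $\mu_0\in\mathcal{W}_1(L,\dm)$ and declaring $\phi(L)$ to be the diagonal line carrying $\Phi(\mu_0)$. This is only well defined if $\Phi(\mu_0)$ is itself non-Dirac: a Dirac mass lies on \emph{two} diagonal lines, and nothing you have said up to that point excludes the possibility that $\Phi$ sends a non-Dirac diagonally supported measure to a Dirac mass (Proposition~\ref{prop:diag-line-supp-char} is satisfied by Diracs as well, so the class being preserved does not help). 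If $\Phi(\mu_0)=\delta_z$, different measures $\mu\in\mathcal{W}_1(L,\dm)$ could a priori land on the two different diagonals through $z$, and your subsequent incidence argument (nonempty intersection of $\mathcal{W}_1(\phi(L),\dm)$ and $\mathcal{W}_1(\phi(L'),\dm)$ forces a crossing) collapses. The paper confronts precisely this case ("If $\Phi(\mu)$ is a Dirac mass\dots") and kills it by bringing in a third measure $\mu'$ on the same line and deriving a violation of injectivity. The fix is available to you in the same spirit: the relation "shares a diagonal line" is preserved, and among diagonally supported measures the Diracs are exactly those admitting two partners $\nu_1,\nu_2$ which each share a diagonal with $\mu$ but not with each other; this characterization is purely in terms of preserved data, so Diracs map to Diracs, non-Diracs to non-Diracs, and only then is $\phi$ well defined. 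As written, though, the step is missing.

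Apart from that, your route is genuinely different from the paper's and the difference is worth noting. The paper never isolates Dirac preservation: it shows $\Phi$ restricts to a self-map of $\mathcal{W}_1(L_+,\dm)\cong\mathcal{W}_1(\mathbb{R},d_{|\cdot|})$ and then invokes the isometric rigidity of the latter from \cite{GTV1}, handling $L_-$ by a separate incidence argument through $\delta_{(0,0)}$. You instead normalize so that all Diracs are fixed and finish with the elementary observation that the potential $t\mapsto\int_{\mathbb{R}}|s-t|\dd\tilde\mu(s)$ has distributional second derivative $2\tilde\mu$, so knowing all distances to Diracs on $L$ determines a measure supported on $L$. This buys a self-contained endgame that avoids citing the (nontrivial) rigidity theorem for $\mathcal{W}_1(\mathbb{R},d_{|\cdot|})$ and treats $L_+$ and $L_-$ symmetrically; the price is the heavier front-end bookkeeping with the line bijection $\phi$, which is where the unaddressed Dirac-image case lives.
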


\begin{proof}
Let $\Phi: \wort \to \wort$ be an isometry, and let $\mu,\mu' \in \mathcal{W}_1(L_{+})$ be two measures, $\mu\neq\mu'$. According to Proposition \ref{prop:same-diag-line-supp-char}, their images $\Phi(\mu)$ and $\Phi(\mu')$ are supported on a diagonal line $L_{\varepsilon,a}$ for a suitable $\varepsilon\in\{-1,1\}$ and $a\in\mathbb{R}$. Since for every $\varepsilon\in\{-1,1\}$ and $a\in\mathbb{R}$ there is an isometry $T: \R^2 \to \R^2 $ that maps $L_{\varepsilon, a} $ onto $L_+$, we can assume that $\supp\big(\Phi(\mu)\big)\subseteq L_+$ and $\supp\big(\Phi(\mu')\big)\subseteq L_+$. In fact, for every $\nu$ with $\supp(\nu)\subseteq L_+$ we conclude that $\supp\big(\Phi(\nu)\big)\subseteq L_+$. Indeed, let us repeat the above argument for $\mu$ and $\nu$. Since they are both supported on $L_+$ their images are supported on the same diagonal line. We already know that $\supp\big(\Phi(\mu)\big)\subseteq L_+$, and therefore if $\Phi(\mu)$ is not a Dirac mass, then Proposition \ref{prop:same-diag-line-supp-char} guarantees that $\supp\big(\Phi(\nu)\big)\subseteq L_+$. If $\Phi(\mu)$ is a Dirac mass, say $\Phi(\mu)=\delta_{(x,x)}$ then we have to exclude the possibility of $\supp\big(\Phi(\nu)\big)\subseteq L_{-1,2x}$. To this aim, consider $\nu$ and $\mu'$, and again, apply Proposition \ref{prop:same-diag-line-supp-char} to conclude that $\supp\big(\Phi(\mu')\big)\subseteq L_{-1,2x}$. But this is a contradiction, as we already know that $\supp\big(\Phi(\mu')\big)\subseteq L_+$ and therefore $\supp\big(\Phi(\mu')\big)=\{(x,x)\}$, or equivalently $\Phi(\mu')=\delta_{(x,x)}=\Phi(\mu)$.

We obtain in this way, that $\Phi$ restricted to $\mathcal{W}_1(L_{+})$ is a (bijective) isometry of $\mathcal{W}_1(L_{+}),$ which is isomorphic to $W_1(\R, |\cdot|)$ which is known to be isometrically rigid --- see \cite{GTV1}. Therefore measures in $\mathcal{W}_1(L_{+})$ are left invariant by $\Phi.$\\

Finally we need to show that measures in $\mathcal{W}_1(L_{-})$ are also left invariant by $\Phi.$
To see this note that $(0,0) \in L_{+} \cap L_{-}$ and considering the measure $\mu = \delta_{(0,0)}$ together with another measure $\nu$ supported on $L_{-}$ we conclude by applying Proposition  \ref{prop:same-diag-line-supp-char} that both $\Phi(\mu)$ and $\Phi(\nu)$ are supported on the same diagonal line. Since we know already that $\Phi(\mu) = \mu = \delta_{(0,0)}$ we can conclude that the support of  $\Phi(\nu)$ is in $L_+$ or it is in $L_-$. Since the first option cannot hold as the pre-images of measures supported on $L_+$ are supported on $L_+$ by the previous paragraph, we are left with the second one. This shows that if $ \nu \in \mathcal{W}_1(L_{-})$ then so is $\Phi(\nu)$. By possibly applying another isometry of $\R^2$ we obtain that $\Phi$ fixes the elements of $ \nu \in \mathcal{W}_1(L_{-})$ as well. 
\end{proof}

\subsection{Diagonal rigidity of $\wprt$ for $p>1$}
In this subsection, we show that the Wasserstein space $\wprt$ for $p>1$ is diagonally rigid. First, we give a metric characterization of Dirac measures. Such a characterization will guarantee that if $\mu$ is a Dirac mass, then $\Phi(\mu)$ is a Dirac mass as well.

\begin{proposition} \label{prop:dirac-char-p>1}
Let $p>1$ and $\mu \in \wprt.$ The following statements are equivalent.
\begin{enumerate}
    \item[(i')] \label{crit:I} $\mu$ is a Dirac mass, that is, $\mu=\delta_x$ for some $x \in \R^2.$
    \item[(ii')] \label{crit:II} For every $\nu \in \wprt$ there exists an $\eta \in \wprt$ such that
    \begin{equation}
    d_{W_p}(\mu, \nu)=d_{W_p}(\nu, \eta)=\frac{1}{2}d_{W_p}(\mu, \eta).
    \end{equation}
\end{enumerate}
\end{proposition}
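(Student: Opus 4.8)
The plan is to prove the two implications separately, following the template of Proposition~\ref{prop:diag-line-supp-char} but now exploiting the strict convexity of the $L^p$-norm that is available only when $p>1$; this is exactly what will collapse the characterized class from ``diagonally supported'' down to ``Dirac''.

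For the direction (i')~$\Longrightarrow$~(ii') I would use an explicit reflection construction. Writing $\mu=\delta_x$ and given an arbitrary $\nu\in\wprt$, I set
\[
\eta:=\bigl(y\mapsto 2y-x\bigr)_{\#}\nu .
\]
Everything rests on the two elementary identities $\dm(x,2y-x)=2\dm(x,y)$ and $\dm(y,2y-x)=\dm(x,y)$, which hold since $2y-x-x=2(y-x)$ and $2y-x-y=y-x$. Because the only coupling of a Dirac with any measure is the product coupling, the first identity yields $d_{W_p}(\mu,\eta)=2\,d_{W_p}(\mu,\nu)$ at once; meanwhile the coupling $\bigl(\mathrm{id}\times(y\mapsto 2y-x)\bigr)_{\#}\nu$ between $\nu$ and $\eta$ gives $d_{W_p}(\nu,\eta)\le d_{W_p}(\mu,\nu)$, and the reverse triangle inequality upgrades this to equality. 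Hence all three quantities in the statement equal $d_{W_p}(\mu,\nu)$, as required.

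For the converse (ii')~$\Longrightarrow$~(i') I would argue by contraposition: assuming $\mu$ is not a Dirac mass, I pick any point $x_0\in\supp(\mu)$ and test condition (ii') against $\nu:=\delta_{x_0}$. Suppose toward a contradiction that a suitable $\eta$ exists, and set $D:=d_{W_p}(\mu,\delta_{x_0})=d_{W_p}(\delta_{x_0},\eta)$, noting $D>0$ since $\mu\neq\delta_{x_0}$. The engine of the argument is the chain of inequalities coming from the product coupling $\mu\otimes\eta$:
\[
2D=d_{W_p}(\mu,\eta)\le \bigl\|\dm(x,z)\bigr\|_{L^p(\mu\otimes\eta)}\le \bigl\|\dm(x,x_0)+\dm(x_0,z)\bigr\|_{L^p(\mu\otimes\eta)}\le 2D,
\]
where the last step is Minkowski's inequality together with the computation $\bigl\|\dm(x,x_0)\bigr\|_{L^p(\mu\otimes\eta)}=\bigl\|\dm(x_0,z)\bigr\|_{L^p(\mu\otimes\eta)}=D$. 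Since the two ends agree, each inequality is in fact an equality.

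The hard part, and the precise place where the hypothesis $p>1$ is indispensable, is extracting rigidity from the equality case of Minkowski's inequality. For $p>1$ the $L^p$-norm is strictly convex, so equality forces the two integrands to be proportional $\mu\otimes\eta$-almost everywhere, i.e.\ $\dm(x,x_0)=\lambda\,\dm(x_0,z)$ for some $\lambda\ge 0$. The key observation is that the left-hand side depends only on $x$ while the right-hand side depends only on $z$; freezing a generic $z$ then shows that $x\mapsto\dm(x,x_0)$ is $\mu$-almost everywhere constant, and matching $L^p$-norms pins that constant to $D$. Thus $\dm(x,x_0)=D>0$ for $\mu$-almost every $x$, which is impossible because $x_0\in\supp(\mu)$ forces the open set $\{x:\dm(x,x_0)<D\}$ to carry positive $\mu$-mass. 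This contradiction rules out the existence of $\eta$ for $\nu=\delta_{x_0}$ and completes the contrapositive. I expect this equality analysis of Minkowski's inequality together with the ``function of $x$ proportional to a function of $z$ must be constant'' principle to be the crux, the remaining steps being routine manipulations with couplings and the reverse triangle inequality.
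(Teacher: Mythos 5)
Your proof is correct and follows essentially the same route as the paper: the same dilation $y\mapsto x+2(y-x)$ for (i')$\Rightarrow$(ii'), and for the converse the same chain (product coupling, pointwise triangle inequality, Minkowski) whose saturation for $p>1$ forces nonnegative proportionality of the two integrands. The only difference is in the endgame: the paper tests against $\delta_y$ with $y=\tfrac23 x_1+\tfrac13 x_2$ for two distinct support points and contradicts the non-constancy of $x\mapsto\dm(x,y)$ on $\supp(\mu)$, whereas you test against $\delta_{x_0}$ for a single support point $x_0$ and contradict the positive $\mu$-mass of the open ball $\{x:\dm(x,x_0)<D\}$ --- both are valid.
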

  In words, item (ii') means that $\mu$ admits a
symmetrical with respect to every other measure.

Note that the above Proposition \ref{prop:dirac-char-p>1} characterizing \emph{Dirac masses} in $\wprt$ for $p>1$ highlights the difference between the cases $p=1$ and $p>1$. This statement is very similar in spirit to Proposition \ref{prop:diag-line-supp-char} characterizing \emph{measures supported on diagonal lines} in $\wort.$ In fact, condition (ii) of Proposition \ref{prop:diag-line-supp-char} is the same as condition (ii') of Proposition \ref{prop:dirac-char-p>1}, up to a modification in the parameter value of the Wasserstein distance that we consider. This means that diagonally supported measures play the role of Dirac masses in the case $p=1,$ and in particular, there are plenty of examples of {\it non-Dirac measures} satisfying condition (ii), which is the $1$-Wasserstein version of condition (ii') above. These examples are explicitly constructed in the proof of the (i) $\Longrightarrow$ (ii) part of Proposition \ref{prop:diag-line-supp-char}.

\begin{proof}
    
Let us prove the direction (i') $\Longrightarrow$ (ii') first. Let $x \in \R,$ let $\mu=\delta_x,$ and let us define the following dilation with center $x$ on $\R^2$:
\begin{equation} \label{eq:dilation-def}
D_x: \R^2 \to \R^2; \, y \mapsto D_x(y):=x+2(y-x).
\end{equation}
Now, for any $\nu \in \wprt,$ let us define the corresponding $\eta_{\nu}$ by
\begin{equation} \label{eq:eta-nu-def}
\eta_{\nu}:=\left(D_x\right)_{\#}\nu.
\end{equation}
It is clear that $d_{W_p}(\mu, \eta_{\nu})=2 d_{W_p}(\mu, \nu).$ Indeed,
\begin{equation*}
\begin{split}
    d_{W_p}(\delta_x, \eta_{\nu})
&=\left(\int\limits_{\R^2}\dm^p(x,z)\dd(D_x)_{\#}\nu (z)\right)^{\frac{1}{p}}\\
&=\left(\int\limits_{\R^2}\dm^p(x,x+2(y-x))\dd\nu (y)\right)^{\frac{1}{p}}\\
&=\left(\int\limits_{\R^2}2^p \dm^p(x,y))\dd\nu (y)\right)^{\frac{1}{p}}\\
&=2 d_{W_p}(\delta_x, \nu).
\end{split}
\end{equation*}

Moreover, by the reversed triangle inequality, $d_{W_p}(\nu, \eta_{\nu}) \geq d_{W_p}(\mu, \eta_{\nu})-d_{W_p}(\mu, \nu)=d_{W_p}(\mu, \nu),$ while the obvious coupling of $\nu$ and $\eta_{\nu}$ given by the dilation $D_x$ guarantees that $d_{W_p}(\nu, \eta_{\nu}) \leq d_{W_p}(\mu, \nu),$ and hence the direction (i') $\Longrightarrow$ (ii') is proved.
\par
To prove the other direction (ii') $\Longrightarrow$ (i'), let $\nu$ be a Dirac mass, $\nu=\delta_y,$ and
let $\eta$ be as in condition (ii'). Then

\begin{equation*}
\begin{split}
d_{W_p}(\mu, \eta)&=\left(\iint_{\R^2 \times \R^2} \dm^p(x,z) \dd \pi_{(\mu,\eta)}^{*}(x,z)\right)^{\frac{1}{p}}\\
&\leq \left(\iint_{\R^2 \times \R^2} \dm^p(x,z) \dd (\mu \otimes \eta)(x,z)\right)^{\frac{1}{p}}\\
&\leq \left(\iint_{\R^2 \times \R^2} (\dm(x,y)+\dm(y,z))^p \dd (\mu \otimes \eta)(x,z)\right)^{\frac{1}{p}}\\
&\leq  \left(\iint_{\R^2 \times \R^2} \dm(x,y)^p \dd (\mu \otimes \eta)(x,z)\right)^{\frac{1}{p}}
+  \left(\iint_{\R^2 \times \R^2} \dm^p(y,z) \dd (\mu \otimes \eta)(x,z)\right)^{\frac{1}{p}}\\
&=  \left(\int\limits_{\R^2} \dm(x,y)^p \dd \mu(x)\right)^{\frac{1}{p}}
+  \left(\int\limits_{\R^2} \dm^p(y,z) \dd \eta(z)\right)^{\frac{1}{p}}\\
&=\left(\iint_{\R^2 \times \R^2} \dm(x,z)^p \dd (\mu \otimes \delta_y)(x,z)\right)^{\frac{1}{p}}
+  \left(\iint_{\R^2 \times \R^2} \dm^p(x,z) \dd (\delta_y \otimes \eta)(x,z)\right)^{\frac{1}{p}}\\
&=d_{W_p}(\mu,\nu)+d_{W_p}(\nu,\eta).
\end{split}
\end{equation*}
Since we assumed that $d_{W_p}(\mu, \nu)=d_{W_p}(\nu, \eta)=\frac{1}{2}d_{W_p}(\mu, \eta)$, all the inequalities in the above chain are saturated. In particular, by the saturation of the $L_p$-Minkowski inequality for $p>1$ (strictly convex norm), we get that there is a nonnegative constant $\alpha \geq 0$ such that 
\begin{equation}
\label{eq:mink-sat}
\dm(y,z)=\alpha \dm(x,y) \qquad \text{ for } \mu\text{-a.e. } x \in \R^2 \text{ and for } \eta\text{-a.e. } z \in \R^2.
\end{equation}
If $\mu$ is not a Dirac mass, then let $x_1$ and $x_2$ be two different points in its support, and let $y:=\frac23 x_1 +\frac13 x_2.$ Then the left hand side of \eqref{eq:mink-sat} is independent of $x,$ while the right hand side is not -- a contradiction.

\par
\end{proof}

The next step is to find a metric characterization of the property that the support of a measure $\mu$ is diagonally aligned with a point $x$ in the underlying space, that is, $\mathrm{supp}(\mu) \subset (x+ L_{+}) \cup (x + L_{-}).$ This metric characterisation turns out to be the property that there is only one $p$-Wasserstein geodesic between $\delta_x$ measure and $\mu$.

\begin{proposition}\label{prop:Dirac-and-mu} Let $p>1$ and $x\in\mathbb{R}^2$ be fixed. For a measure $\mu\in\wprt$ the following statements are equivalent:
\begin{itemize}
    \item[(a)] There exists a unique unit-speed geodesic segment $\gamma$ such that $\gamma(0)=\delta_x$ and $\gamma(T)=\mu$, where $d_{W_p}(\delta_x,\mu)=T$.
    \item[(b)] We have the inclusion $\mathrm{supp}\mu\subseteq D_x$, where 
    $D_x = (x + L_+) \cup (x+L_-)$.
\end{itemize}
\end{proposition}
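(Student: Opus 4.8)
The plan is to reduce uniqueness of the Wasserstein geodesic to uniqueness of geodesics in the underlying plane, and two facts drive the argument. First, because one endpoint is a Dirac mass, the only coupling of $\delta_x$ and $\mu$ is the product $\delta_x\otimes\mu$; hence, by the standard description of $p$-Wasserstein geodesics (see e.g. \cite{AGS,Villani}), every constant-speed geodesic from $\delta_x$ to $\mu$ has the form $s\mapsto(e_s)_{\#}\Pi$ for a probability measure $\Pi$ on the space of constant-speed geodesics $[0,1]\to\R^2$, where $e_s$ denotes evaluation at time $s$, every curve in $\supp\Pi$ starts at $x$, and $(e_1)_{\#}\Pi=\mu$. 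Second, I use the observation from the introduction that, in the maximum metric, the geodesic joining $x$ and $y$ is unique exactly when $x$ and $y$ lie on a common diagonal, i.e. when $y\in D_x$.

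For the implication (b) $\Rightarrow$ (a), suppose $\supp\mu\subseteq D_x$. Then for each $y\in\supp\mu$ there is a unique geodesic $\gamma_{x\to y}$ from $x$ to $y$, and $y\mapsto\gamma_{x\to y}$ is measurable. Any admissible $\Pi$ is concentrated on $\{\gamma_{x\to y}\mid y\in\supp\mu\}$; since $e_1$ is injective on this set and $(e_1)_{\#}\Pi=\mu$, the plan is forced to be $\Pi=(y\mapsto\gamma_{x\to y})_{\#}\mu$. Thus the interpolating curve, and after reparametrisation the unit-speed geodesic, is unique.

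The implication (a) $\Rightarrow$ (b) carries the real content, and I argue by contraposition. Assume $\supp\mu\not\subseteq D_x$ and pick $y_0\in\supp\mu\setminus D_x$; as $D_x$ is closed there is a ball $B\ni y_0$ with $B\cap D_x=\varnothing$ and $\mu(B)>0$. Put $x$ at the origin, write $y=(y^{(1)},y^{(2)})$, and assume (the other case being symmetric) that $|y^{(1)}|>|y^{(2)}|$ on $B$. A direct computation shows that for such $y$ the admissible midpoints $\{z\mid\dm(0,z)=\dm(z,y)=\tfrac12\dm(0,y)\}$ form a segment in the second-coordinate direction of positive length $\dm(0,y)-|y^{(2)}|$, bounded below away from $0$ after shrinking $B$. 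I then fix a small $\varepsilon_0>0$ and define on $B$ two midpoint maps, the straight-line midpoint $M_1(y)$ and its translate $M_2(y)=M_1(y)+(0,\varepsilon_0)$, both landing in this segment; outside $B$ I use the same selection for both. Concatenating straight half-segments through $M_i(y)$ yields genuine geodesics from $x$ to $y$, hence admissible plans $\Pi_1,\Pi_2$ and Wasserstein geodesics $\gamma_1,\gamma_2$.

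It remains to certify $\gamma_1\neq\gamma_2$, which is the main obstacle. This is detected at the midpoint: $\gamma_1(\tfrac T2)$ and $\gamma_2(\tfrac T2)$ coincide outside the contribution of $B$, while on $B$ they equal $(M_1)_{\#}(\mu|_B)$ and $(M_2)_{\#}(\mu|_B)=\tau_{\#}(M_1)_{\#}(\mu|_B)$ respectively, where $\tau$ is the nonzero translation by $(0,\varepsilon_0)$. Since $\mu|_B$ is a nonzero measure, a nonzero translation changes it, so these two pushforwards differ and $\gamma_1(\tfrac T2)\neq\gamma_2(\tfrac T2)$, contradicting (a). The delicate points are precisely this distinctness certification --- a different geodesic selection could a priori produce the same interpolation, which the translation trick rules out --- together with the measurable and uniformly nondegenerate choice of midpoints on $B$.
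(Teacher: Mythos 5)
Your proposal is correct and follows essentially the same route as the paper: both reduce the problem, via the displacement-interpolation description of $p$-Wasserstein geodesics for $p>1$ (Villani, Corollaries 7.22--7.23), to the fact that the coupling $\delta_x\otimes\mu$ is the unique plan and that a point $y$ is joined to $x$ by a unique geodesic of $(\R^2,\dm)$ exactly when $y\in D_x$. The only difference is one of detail: where the paper delegates the equivalence to the cited corollaries, you verify both directions by hand --- in particular your explicit two-midpoint construction with the translation by $(0,\varepsilon_0)$ correctly certifies non-uniqueness in the (a) $\Rightarrow$ (b) direction.
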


\begin{proof}
    
 To prove the statement let us note that Corollary 7.22 in Villani's book \cite{Villani} says that if $p>1$, and the underlying metric space is a complete, separable, and locally compact length space, then constant-speed geodesics connecting two measures are all displacement interpolations, i.e., geodesics are always constructed from optimal transport plans.

Therefore (see Corollary 7.23 in \cite{Villani}), if we want to guarantee that there is only one geodesic between two measures $\mu,\nu$, we need two properties:
\begin{itemize}
    \item[$\bullet$] we need a unique optimal transport plan $\widetilde{\pi}$ 
    \item[$\bullet$] and for $\widetilde{\pi}$-almost every $(x,y)$, $x$ and $y$ must be joined by a unique geodesic.
\end{itemize}
Let us note that the first property is automatically satisfied since one of the masses that we consider is a Dirac mass. Furthermore, note that in  $(\mathbb{R}^2,\dm)$, the second property means exactly that $x$ and $y$ are diagonally aligned, i.e. both points lie on the same diagonal line. For a fixed $x\in\mathbb{R}^2$ let us denote by $D_x$ the set of those points that are diagonally aligned with $x$. Of course, $D_x$ is the union of the two diagonal lines $L_++x$ and $L_{-}+x$ passing through $x$ concluding the proof. 
\end{proof}
Now we are in position to prove that $\wprt$ is diagonally rigid for $p>1$.

\begin{theorem} \label{T: main-2}
For all $p>1$ the Wasserstein space $\wprt$ is diagonally rigid.
\end{theorem}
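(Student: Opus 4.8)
The plan is to run the same scheme as in the $p=1$ case, but now with \emph{Dirac masses} playing the distinguished role that diagonally supported measures played before. First I would invoke Proposition \ref{prop:dirac-char-p>1}: being a Dirac mass is an intrinsic metric property, so any isometry $\Phi$ maps Diracs to Diracs, giving a bijection $T\colon\R^2\to\R^2$ with $\Phi(\delta_x)=\delta_{T(x)}$; the identity $d_{W_p}(\delta_x,\delta_y)=\dm(x,y)$ forces $T$ to be an isometry of $(\R^2,\dm)$. Replacing $\Phi$ by $(T^{-1})_\#\circ\Phi$, I may assume $\Phi$ fixes every Dirac mass, and it then suffices to show that such a $\Phi$ fixes every measure supported on $L_+$ and on $L_-$.

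Next I would promote this to the statement that $\Phi$ preserves the classes $\mathcal{W}_p(L_+,\dm)$ and $\mathcal{W}_p(L_-,\dm)$. By Proposition \ref{prop:Dirac-and-mu}, for fixed $x$ the relation $\supp(\nu)\subseteq D_x$ is equivalent to the existence of a unique unit-speed geodesic from $\delta_x$ to $\nu$; since $\Phi$ is an isometry fixing $\delta_x$, this property is preserved, whence $\{x:\supp(\nu)\subseteq D_x\}=\{x:\supp(\Phi(\nu))\subseteq D_x\}$ for every $\nu$. A short geometric computation shows that if $\mu$ is supported on $L_+$ and is not a Dirac, then this set is exactly $L_+$ (for $x\in L_+$ one has $L_+=x+L_+\subseteq D_x$, while for $x\notin L_+$ the intersection $D_x\cap L_+$ is at most one point), and moreover $\bigcap_{x\in L_+}D_x=L_+$. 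Combining these yields $\supp(\Phi(\mu))\subseteq L_+$; the same works for $L_-$ and for $\Phi^{-1}$, so $\Phi$ restricts to a bijective isometry of $\mathcal{W}_p(L_+,\dm)$ (resp.\ $\mathcal{W}_p(L_-,\dm)$) fixing all Diracs on that line.

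The crucial and hardest step is to prove that an isometry of $\mathcal{W}_p(L_+,\dm)$ fixing all Diracs on $L_+$ must be the identity. Here one \emph{cannot} merely cite rigidity of the one-dimensional factor: $\mathcal{W}_p(L_+,\dm)$ is isometric to $\mathcal{W}_p(\R,|\cdot|)$, which for $p=2$ is not rigid and in fact admits nontrivial isometries fixing every Dirac mass. The idea is to exploit the ambient two-dimensional geometry. Identifying $\mu\in\mathcal{W}_p(L_+,\dm)$ with its pushforward $\tilde\mu$ on $\R$ via $s\mapsto(s,s)$, and writing a general Dirac centre as $x=(m-r,m+r)$ with $m\in\R$, $r\ge0$, the elementary identity $\max\{|s-(m-r)|,|s-(m+r)|\}=|s-m|+r$ gives
\begin{equation*}
d_{W_p}(\mu,\delta_x)^p=\int_{\R}\big(|s-m|+r\big)^p\dd\tilde\mu(s).
\end{equation*}
Since $\Phi$ fixes all Diracs and $\Phi(\mu)$ again lies in $\mathcal{W}_p(L_+,\dm)$, the two-parameter families produced by $\mu$ and by $\Phi(\mu)$ coincide, so it remains to prove that this family determines $\tilde\mu$.

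For the reconstruction I would differentiate in $r$ at $r=0$, producing the potential $g(m)=\int_\R|s-m|^{p-1}\dd\tilde\mu(s)$ for all $m$. When $p-1$ is not a nonnegative even integer (in particular for $p=2$), a standard Fourier argument — the kernel $|\cdot|^{p-1}$ has nonvanishing Fourier transform off the origin — shows that $g$ determines the finite measure $\tilde\mu$. The only exceptional case is $p$ an odd integer $\ge3$; there $(|s-m|+r)^p$ is a polynomial in $r$ whose coefficient of $r^{p-1}$ is $p\int_\R|s-m|\dd\tilde\mu(s)$, and this first-moment potential already determines $\tilde\mu$ (its distributional second derivative equals $2\tilde\mu$). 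In either case $\tilde\mu=\widetilde{\Phi(\mu)}$, hence $\Phi(\mu)=\mu$ for every $\mu$ supported on $L_+$, and symmetrically on $L_-$, which is exactly diagonal rigidity. I expect the determination of $\tilde\mu$ from the Dirac-distance data, together with the clean handling of the exceptional exponents, to be the main obstacle, since it is precisely the point where the two-dimensional maximum-metric geometry restores rigidity in the otherwise non-rigid case $p=2$.
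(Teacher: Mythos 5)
Your first two steps coincide with the paper's: Diracs go to Diracs by Proposition \ref{prop:dirac-char-p>1}, one reduces to $\Phi$ fixing every Dirac, and then Proposition \ref{prop:Dirac-and-mu} together with $D_x\cap D_y=L_+$ for distinct $x,y\in L_+$ shows that $\Phi$ preserves $\mathcal{W}_p(L_+,\dm)$ and $\mathcal{W}_p(L_-,\dm)$. Where you genuinely diverge is the final, crucial step. The paper splits into cases: for $p\neq 2$ it simply cites the isometric rigidity of $\mathcal{W}_p(\mathbb{R},d_{|\cdot|})$ from \cite{GTV1}, while for $p=2$ it invokes Kloeckner's classification of the isometries of $\mathcal{W}_2(\mathbb{R})$ that fix all Diracs (the shape-preserving $\Phi^*$, the exotic flow $\Phi_t$, and their compositions) and eliminates each by explicit distance computations against the off-diagonal Dirac masses $\delta_{(2,0)}$, $\delta_{(-1,0)}$, $\delta_{(-1/2,0)}$. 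You instead treat all $p>1$ uniformly: the identity $\dm\bigl((s,s),(m-r,m+r)\bigr)=|s-m|+r$ converts the distances to \emph{all} off-diagonal Diracs into the potential $F(m,r)=\int_{\mathbb{R}}(|s-m|+r)^p\dd\tilde\mu(s)$, and you reconstruct $\tilde\mu$ from $F$. This is conceptually cleaner in that it explains, rather than merely verifies, how the two-dimensional geometry destroys the exotic $p=2$ isometries; and for $p=2$ your reconstruction ($g(m)=\int_{\mathbb{R}}|s-m|\dd\tilde\mu(s)$ has distributional second derivative $2\tilde\mu$) is fully rigorous and arguably tidier than the paper's numerical checks. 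The one point needing care is your appeal to ``a standard Fourier argument'' for general non-even $\alpha=p-1$: for $\alpha>2$ the Fourier transform of $|t|^{\alpha}$ is only a finite-part homogeneous distribution, and multiplying it by $\widehat{\tilde\mu}$ requires smoothness of $\widehat{\tilde\mu}$ at the origin, so this injectivity claim deserves a genuine proof or reference. That gap is harmless, however, since for $p\neq 2$ you may fall back on the citation of \cite{GTV1} exactly as the paper does; your potential argument is indispensable only for $p=2$, where it is complete.
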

\begin{proof}
Let $\Phi$ be an isometry. Since Proposition \ref{prop:dirac-char-p>1} is a metric characterization of Dirac masses, we know that $\Phi$ maps the set of Dirac masses onto itself. That is, there exists a bijection $T:\mathbb{R}^2\to\mathbb{R}^2$ such that $\Phi(\delta_x)=\delta_{T(x)}$. In fact, $T$ is an isometry, as $d_{W_p}(\delta_x,\delta_y)=\dm(x,y)$ for all $x,y\in\mathbb{R}^2$. Without loss of generality, we can assume that $T(x)=x$, and thus $\Phi(\delta_x)=\delta_x$ for all $x\in\mathbb{R}^2$. Next, consider the diagonal line $L_+$. (The case of $L_-$ is completely analogous.) Fix an arbitrary $x\in L_+$, and observe that according to Proposition \ref{prop:Dirac-and-mu}, for any $\mu$ such that $\mathrm{supp}(\mu)\subseteq L$, there is only one geodesic connecting $\delta_x$ and $\mu$. Therefore, there must be only one geodesic between $\Phi(\delta_x)=\delta_x$ and $\Phi(\mu)$. Again, according to Proposition \ref{prop:Dirac-and-mu} this means that $\mathrm{supp}\big(\Phi(\mu)\big)\subseteq D_x$. Now choose a $y\in L_+$ ($y\neq x$) and repeat the argument. The conclusion is that $\mathrm{supp}\big(\Phi(\mu)\big)\subseteq D_y$. But $D_x\cap D_y=L_+$, and therefore $\mathrm{supp}\big(\Phi(\mu)\big)\subseteq L_+$. Now we know that $\Phi$ sends measures supported on $L_+$ into measures supported on $L_+$. Since $L_+$ endowed with the restriction of $\dm:\mathbb{R}^2\times\mathbb{R}^2\to\mathbb{R}_+$ onto $L_+\times L_+$ is nothing else but $(\mathbb{R},|\cdot|)$, the set
$$\mathcal{W}_p(L_+,\dm):=\{\mu\in\wprt\,|\,\mathrm{supp}\mu\subseteq L_+\}$$ endowed with the Wasserstein distance
is isometrically isomorphic to the Wasserstein space $\mathcal{W}_p(\mathbb{R},d_{|\cdot|})$ investigated in \cite{GTV1,K}, which is isometrically rigid if $p\neq 2$. Since we assumed that $\Phi(\delta_x)=\delta_x$, isometric rigidity forces the restriction $\Phi|_{\mathcal{W}_p(L_+,\dm)}$ to be the identity, i.e. $\Phi(\mu)=\mu$ for all $\mu$ supported on $L_+$. The same argument with $L_-$ completes the proof in the $p\neq2$ case.\\

If $p=2$ we need to use a more involved argument, since $\mathcal{W}_2(\mathbb{R},d_{|\cdot|})$ is not isometrically rigid. In fact, if $\Psi:\mathcal{W}_2(\mathbb{R},d_{|\cdot|})\to\mathcal{W}_2(\mathbb{R},d_{|\cdot|})$ is an isometry, then $\Psi(\delta_x)=\delta_x$ for all $x\in \mathbb{R}$ itself does not imply $\Psi(\mu)=\mu$ for all $\mu\in\mathcal{W}_2(\mathbb{R},d_{|\cdot|})$, as $\mathcal{W}_2(\mathbb{R},d_{|\cdot|})$ admits exotic isometries and a nontrivial shape-preserving isometry. 

Therefore, even if we know that $\Phi(\delta_{(x,x)})=\delta_{(x,x)}$ for all $(x,x)\in L_+$ we cannot guarantee yet that $\Phi(\mu)=\mu$ for all $\mu$ with $\supp(\mu)\subseteq L_+$. We have to rule out that the restriction of $\Phi$ onto $\mathcal{W}_p(L_+,\dm)$ does not act like a non-trivial isometry. This boils down to investigating the action on measures whose support consists of two points of $L_+$ as follows: using the isometric identification $t\mapsto(t,t)$ between the real line and $L_+$, we will use Kloeckner's result which tells us how the image of a $\mu=\alpha\delta_{(x,x)}+(1-\alpha)\delta_{(y,y)}$ would look like if $\Phi$ would act on $L_+$ like non-trivial isometry. Then we will choose a special $\mu$ and a special Dirac measure $\delta_{(u,v)}$ (not supported on $L_+$) with the property that $d_{W_2}(\mu,\delta_{(u,v)})\neq d_{W_2}(\Phi(\mu),\delta_{(u,v)})=d_{W_2}(\Phi(\mu),\Phi(\delta_{(u,v)}))$, a contradiction. 

Let us introduce some notations. For the diagonal line $L_+$ the set of measures supported on two points of $L_+$ will be denoted by $\Delta_2$
\begin{equation}\label{delta2L}
    \Delta_2=\{\alpha\delta_{(x,x)}+(1-\alpha)\delta_{(y,y)}\,|\, \alpha\in(0,1),\, x,y\in \mathbb{R}\}.
\end{equation}
Following the notations in  Kloeckner's paper \cite{K}, elements of $\Delta_2$ will be parametrized by three parameters $m\in\mathbb{R}$, $\sigma\geq0$, and $r\in\mathbb{R}$ as follows:
\begin{equation}\label{parametrization}
\mu(m,\sigma,r)=\frac{e^{-r}}{e^r+e^{-r}}\delta_{(m-\sigma e^r,m-\sigma e^r)}+\frac{e^{r}}{e^r+e^{-r}}\delta_{(m+\sigma e^{-r},m+\sigma e^{-r})}.
\end{equation}
According to Lemma 5.2 in \cite{K}, if an isometry $\Phi$ fixes all Dirac masses, then its action on $\Delta_2$ is
$$\Phi\big(\mu(m,\sigma,r)\big):=\mu(m,\sigma,\varphi(r))$$
where $\varphi:\R\to\R$ is an isometry. In other words, $\Phi$ is equal to the shape-preserving isometry
\begin{equation}\label{eq:Phistar}
\Phi^*:\mathcal{W}_2(L_+)\to\mathcal{W}_2(L_+),\quad\Phi^*\big(\mu(m,\sigma,r)\big):=\mu(m,\sigma,-r),
\end{equation} 
or $\Phi$ is equal to an exotic isometry
\begin{equation}\label{eq:Phit}
\Phi_t:\mathcal{W}_2(L_+)\to\mathcal{W}_2(L_+),\quad\Phi^t\big(\mu(m,\sigma,r)\big):=\mu(m,\sigma,r+t)
\end{equation}
for some $t\neq0$, or $\Phi$ is the composition $\Phi_t\circ\Phi^*$ for some $t\neq0$. Note, that if $t=0$, then $\Phi_t$ is the identity, so $\Phi_0$ is not an exotic isometry.\\

To handle the case $\Phi^*$, choose $\mu=\mu(0,1,\ln2)=\frac15\delta_{(-2,-2)}+\frac45\delta_{(\frac12,\frac12)}$. Then $$\Phi^*(\mu)=\mu(0,1,-\ln2)=\frac45\delta_{(-\frac12,-\frac12)}+\frac15\delta_{(2,2)}.$$ Calculating the Wasserstein distance of $\mu$ and $\Phi(\mu)$ from $\delta_{(2,0)}$, we obtain that
$$d_{W_2}\big(\delta_{(2,0)},\mu(0,1,\ln2)\big)=\sqrt{5}\quad\mbox{and}\quad d_{W_2}\big(\delta_{(2,0)},\mu(0,1,-\ln2)\big)=\sqrt{5+\frac45},$$
So if $\Phi$ is an isometry, it cannot act on $L_+$ like $\Phi^*$.\\

To handle the case $\Phi_t$ for $t>0$, choose $\mu=\mu(0,1,0)=\frac12\delta_{(-1,-1)}+\frac12\delta_{(1,1)}$. Then $\Phi(\mu(0,1,0))=\mu(0,1,t)$. Now fix the Dirac measure $\delta_{(-1,0)}$ and calculate both $d_{W_2}(\delta_{(-1,0)},\mu(0,1,0))$ and $d_{W_2}(\delta_{(-1,0)},\mu(0,1,t))$. Again, if $\Phi$ would act like $\Phi_t$ on $L_+$, we should get the same result, since $\Phi$ is an isometry. The calculation shows that
$d_{W_2}\big(\delta_{(-1,0)},\mu(0,1,0)\big)=\sqrt{\frac52}$ and 
$$d_{W_2}\big(\delta_{(-1,0)},\mu(0,1,t)\big)=\sqrt{2+\frac{2-e^{-t}}{e^t+e^{-t}}}.$$

These two numbers are equal if and only if $t=0$ or $t=\ln3$. We assumed that $t>0$, so one single Dirac $\delta_{(-1,0)}$ excluded all $\Phi_t$ except $t=\ln3$. Choosing a different Dirac measure, say $\delta_{(-\frac12,0)}$, a simple calculation shows that 
$$\sqrt{\frac{13}{8}}=d_{W_2}\big(\delta_{(-\frac12,0)},\mu(0,1,0)\big)=d_{W_2}\big(\delta_{(-\frac12,0)},\mu(0,1,\ln3)\big)=\sqrt{\frac{12}{8}+\frac{1}{40}},$$
a contradiction. Similar calculations for $\Phi_t$ with negative $t$ and for $\Phi^*\circ\Phi_t$ show that the only case when we don't get a contradiction is when $\Phi$ acts as $\Phi_0$, which is the identity.
\end{proof}

\subsection{Diagonal rigidity of $\woq$}
In this subsection, we consider the case when $X= Q= [-1,1]^2$ and $p=1$. Diagonal rigidity is achieved as a result of the following statements about measures that are supported on the sides, at the corners, and finally on the diagonals of $Q$. The first statement concerns measures supported on the opposite sides of the boundary of $Q $ and it is valid for all $p\geq 1$.

\begin{lemma} \label{side-measures} 
Let $p\geq 1$ and $\Phi: \wpq \to \wpq$ be an isometry. If   $\mu, \nu \in \wpq $ are two probability measures whose supports lie on opposite sides of the closed unit ball $Q= [-1,1]^2$, then their isometric images $\Phi(\mu)$ and $\Phi(\nu)$ have the same property.
\end{lemma}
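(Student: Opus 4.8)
The plan is to reduce the statement to the single metric relation ``being at the maximal possible distance'' and then to promote this relation, via an iterated far-set construction, to a characterization of full sides that is manifestly invariant under any isometry.

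First I would record the diameter computation. Since every coordinate of a point of $Q$ lies in $[-1,1]$, one has $\dm(x,y)\le 2$ for all $x,y\in Q$, so the product coupling gives $d_{W_p}(\mu,\nu)\le 2$ for all $\mu,\nu\in\wpq$; thus $\operatorname{diam}(\wpq)=2$. I would then prove the elementary but crucial equivalence: $d_{W_p}(\mu,\nu)=2$ if and only if $\dm(x,y)=2$ for every $x\in\supp(\mu)$ and every $y\in\supp(\nu)$. The nontrivial direction uses that if the minimal coupling cost equals $2$ while every coupling costs at most $2$, then in particular the product coupling $\mu\otimes\nu$ costs $2$, forcing $\dm=2$ for $\mu\otimes\nu$-almost every pair, and by continuity of $\dm$ together with $\supp(\mu\otimes\nu)=\supp(\mu)\times\supp(\nu)$ this holds on all pairs of support points. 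Combined with the pointwise fact that $\dm(x,y)=2$ holds exactly when $x$ and $y$ differ by $2$ in one coordinate, i.e. exactly when they lie on a pair of opposite sides, this yields one half of the lemma: measures on opposite sides are at distance $2$, and since $\Phi$ is an isometry, $d_{W_p}(\Phi(\mu),\Phi(\nu))=2$ as well.

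The difficulty is that $d_{W_p}=2$ does not by itself force opposite sides: for instance $\tfrac12\delta_{(-1,-1)}+\tfrac12\delta_{(1,1)}$ and $\tfrac12\delta_{(1,-1)}+\tfrac12\delta_{(-1,1)}$ (supported on the two diagonals) are at distance $2$, yet neither is supported on a side. To isolate the genuine side configurations I would introduce the far-set $\mu^{\perp}:=\{\nu\in\wpq : d_{W_p}(\mu,\nu)=2\}$, which satisfies $\Phi(\mu^{\perp})=\Phi(\mu)^{\perp}$ for every isometry $\Phi$. The key computation is that if $\supp(\mu)$ lies on a side $S$ and is not contained in a single corner, then $\mu^{\perp}$ equals exactly $\wpq(S')$, the set of all measures supported on the opposite side $S'$, and consequently the double far-set $(\mu^{\perp})^{\perp}$ recovers exactly $\wpq(S)$. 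Both computations follow from the same pointwise analysis: to be at distance $2$ from a measure whose support attains at least two distinct heights on $S$, a point must have its $S$-normal coordinate pinned to the opposite value. Since $\mu\mapsto\mu^{\perp}$ is isometry-equivariant, $\Phi$ carries $\wpq(S)=(\mu^{\perp})^{\perp}$ onto $(\Phi(\mu)^{\perp})^{\perp}$, and one reads off that $\Phi(\mu)$ and $\Phi(\nu)$ lie in two families that are mutually opposite far-sets.

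The main obstacle is the closing step: knowing that $\Phi$ sends the side-family $\wpq(S)$ isometrically onto the invariantly-defined set $(\Phi(\mu)^{\perp})^{\perp}$, I must still certify that this image is again of the form $\wpq(\tilde S)$ for an actual side $\tilde S$, with $\Phi(\nu)$ landing on the opposite side. I expect to handle this by characterizing side-families intrinsically as the maximal families $\mathcal F$ with $\mathcal F=(\mathcal F^{\perp})^{\perp}$ whose far-set $\mathcal F^{\perp}$ is nonempty and again of this maximal type, thereby making ``being a side'' an isometry-invariant property; the degenerate cases --- corner Diracs and, more generally, measures concentrated at a corner, whose far-sets are strictly larger (a union of two sides) --- must be excluded or treated separately by choosing a non-degenerate representative within $\wpq(S)$ on which to run the double-perp argument. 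A subtlety worth emphasising is that, because the lemma must hold for $p=1$ where Dirac masses are not known to be preserved by $\Phi$, the entire argument has to be phrased at the level of measures and their mutual distances, never invoking the preservation of $\delta$-masses.
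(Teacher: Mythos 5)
Your preparatory steps are correct: $d_{W_p}(\mu,\nu)\le 2$ always, with equality precisely when $\dm(x,y)=2$ for every $x\in\supp(\mu)$ and $y\in\supp(\nu)$, and your pair $\mu=\tfrac12\delta_{(-1,-1)}+\tfrac12\delta_{(1,1)}$, $\nu=\tfrac12\delta_{(1,-1)}+\tfrac12\delta_{(-1,1)}$ genuinely shows that this diametral relation does not by itself force the two supports into a pair of opposite sides. You should be aware that the paper's own proof is exactly the short route you reject: it asserts that $d_{W_p}(\mu,\nu)=2$ holds \emph{if and only if} $\supp(\mu)$ and $\supp(\nu)$ are contained in two opposite sides, and concludes in one line from the isometry property. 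Your opposite-corner configuration satisfies the distance condition but not the support condition, so the ``only if'' direction of that assertion fails as literally stated, and the written argument needs exactly the kind of supplementary input you are trying to build. The first two stages of your repair are sound: $\mu\mapsto\mu^{\perp}$ is equivariant under bijective isometries, and for $\mu$ supported on a side $S$ but not concentrated at a single corner one indeed has $\mu^{\perp}=\{\nu:\supp(\nu)\subseteq S'\}$ and $(\mu^{\perp})^{\perp}=\{\xi:\supp(\xi)\subseteq S\}$, so $\Phi$ carries the whole side-family onto $\bigl(\Phi(\mu)^{\perp}\bigr)^{\perp}$, which is again of the form $\{\xi:\supp(\xi)\subseteq C\}$ for some perp-closed $C\subseteq Q$.

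The gap is in your closing step, and the mechanism you propose does not work. The perp-closed subsets of $Q$ are exactly the four sides, the four unions of two adjacent sides, the four single corners, and the two pairs of opposite corners. The maximal ones are the adjacent-side unions (each contains two sides and an opposite-corner pair), not the sides, and the far-set of a side-family is another side-family, hence never of ``maximal type''; so ``maximal perp-closed family whose far-set is again of maximal type'' selects nothing. A betweenness test alone does not finish the job either: for $p>1$ the family $\{\alpha\delta_{c_1}+(1-\alpha)\delta_{c_2}\}$ over an opposite-corner pair is perp-closed, has perp-closed far-set, and fails to embed isometrically into $\R$, just like a side-family. What does close the argument is a finite elimination over the non-side perp-closed configurations using invariants that $\Phi$ transports: $C$ cannot be a corner or an adjacent-side union, because then $\{\xi:\supp(\xi)\subseteq C\}$ or its far-set is a single measure while $\Phi\bigl(\{\xi:\supp(\xi)\subseteq S\}\bigr)$ and its far-set are not; and $C$ cannot be an opposite-corner pair, because that family contains no triple realizing the distances $(1,1,2)$ when $p>1$ (and is isometric to an interval when $p=1$), whereas the side-family contains the Dirac masses at the endpoints and midpoint of $S$ (and, for $p=1$, an equilateral triple). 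Until you supply an argument of this kind, the assertion that the image family is again a side-family --- and hence the lemma --- remains unproved.
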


\begin{proof}
Let us note that if $x,y \in Q$ are any two points then $\dm(x,y) \leq 2 $ with equality if and only if $x$ and $y$ lie on two opposite sides of $Q$. 

This implies that if $\mu, \nu \in \wpq$, then 
\begin{equation} \label{sides}
d_{W_p}(\mu, \nu) \leq 2,
\end{equation} 
with equality if and only if the supports $\supp(\mu)$ and $\supp(\nu) $ are contained in two opposite sides of $Q$. 
To see this, note that inequality \eqref{sides} follows immediately from the definition of the Wasserstein metric $d_{W_p}$ and the fact that $\dm(x,y) \leq 2$ for all $x,y\in Q$. Furthermore, if $\supp(\mu)$ and $\supp(\nu) $ are contained in two opposite sides of $Q$, then $\dm(x,y) = 2$ for all $x\in\supp (\mu)$ and $y\in \supp(\nu)$.

Let $\pi_0 $ be an optimal coupling of $\mu$ and $\nu$. Since $\supp(\pi_0) \subseteq\supp (\mu) \times\supp (\nu)$ we have that $\dm(x,y)= 2$ for any $(x,y) \in\supp (\pi_0)$ and therefore
$$ d_{W_p}^p(\mu, \nu) = \int\limits_{Q\times Q} \dm^p(x,y) ~\mathrm{d}\pi_0(x,y) = 2^p.$$
To show the converse, let us assume that  $d_{W_p}^p(\mu, \nu)  = 2^p$. Then for all couplings 
$\pi \in C(\mu, \nu)$ we have 
$$2^p =d_{W_p}^p(\mu, \nu)= \int\limits_{Q\times Q} \dm^p(x,y) ~\mathrm{d}\pi(x,y) \leq  \int\limits_{Q\times Q} 2^p  ~\mathrm{d}\pi(x,y) = 2^p,$$
thus $\dm(x,y) =2 $ for $\pi$ almost all $(x,y)$. This applies for $\pi = \mu\otimes \nu$, and so $\mu$ and $\nu$ must be concentrated on the opposite sides of $Q$.

The statement of the lemma is now an immediate consequence of this claim. Indeed,  assume that $\supp(\mu)$ and $\supp(\nu) $ are contained in two opposite sides of $Q$. Then we have $d_{W_p}^p(\mu, \nu)=2^p$. Since  $\Phi: \wpq \to \wpq$ is an isometry, we have $d_{W_p}^p(\Phi(\mu), \Phi(\nu))=2^p.$ But, then the supports of the two measures $\Phi(\mu)$ and $\Phi(\nu)$ must be contained in two opposite sides of $Q$. 
\end{proof}
\begin{corollary} \label{corners} 
Let us fix a $p\geq1$ and denote by $ D= \{ x_1, x_2, x_3, x_4 \} $ the set of vertices of $Q$ and let $V$ be the set of Dirac measures supported on the points of $D$, i.e., 
$$V=\{ \delta _{x_1} = \delta_{(-1,-1)} , \delta_{x_2} =\delta_{(1,-1)} , \delta_{x_3}=\delta_{(1,1)}, \delta_{x_4} = \delta_{(-1,1)} \}.$$  Given any isometry  $\Phi: \wpq \to \wpq$, there exists an isometry $\Psi: (Q, \dm) \to (Q, \dm)$ such that $\Phi \circ \Psi_{\#} (\delta_x) = \delta_x$  for all $x\in D$.
\end{corollary}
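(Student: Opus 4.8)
The plan is to show that $\Phi$ permutes the four vertex Diracs via a permutation that is realized by a symmetry of the square $Q$; once this is established, we simply take $\Psi$ to be the inverse of that symmetry, so that $\Phi\circ\Psi_{\#}$ fixes each $\delta_x$, $x\in D$. The starting observation is that all six pairwise distances among the vertex Diracs equal $2$, the diameter of the space: indeed $\dm(x_i,x_j)=2$ for every pair of distinct vertices, whether adjacent or diagonal, and hence $d_{W_p}(\delta_{x_i},\delta_{x_j})=2$. Consequently the four images $\Phi(\delta_{x_1}),\dots,\Phi(\delta_{x_4})$ are again pairwise at distance $2$.

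First I would prove that any four measures $\nu_1,\dots,\nu_4\in\wpq$ that are pairwise at distance $2$ must be exactly the four vertex Diracs. Fix $i$ and apply Lemma \ref{side-measures}: for each $j\neq i$ the supports $\supp(\nu_i)$ and $\supp(\nu_j)$ lie on two opposite sides, so $\supp(\nu_i)$ is contained in some side $S_{ij}$, whence $\supp(\nu_i)\subseteq\bigcap_{j\neq i}S_{ij}$. If two of these three sides are distinct, they cannot be opposite (their intersection would be empty), so they are adjacent and meet in a single vertex, forcing $\nu_i$ to be a vertex Dirac. Otherwise all $S_{ij}$ coincide with one side $S$, and then the $\nu_j$ ($j\neq i$) are all supported on the opposite side $S'$; but three measures supported on a single side cannot be pairwise at distance $2$, because among measures supported on one side only the two endpoint Diracs are at distance $2$ from each other. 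This contradiction shows that each $\nu_i$ is a vertex Dirac, and being pairwise distinct they exhaust $V$. Applying this to $\nu_i=\Phi(\delta_{x_i})$ yields a permutation $\sigma$ of $D$ with $\Phi(\delta_{x_i})=\delta_{\sigma(x_i)}$.

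The main obstacle is that $\sigma$ need not a priori be induced by an isometry of $Q$: the symmetry group of the square acts on the four vertices as the dihedral group of order $8$, not as the full symmetric group, so I must rule out permutations that mix adjacent and diagonal vertex pairs. To separate these two types metrically, I would examine, for a pair of vertices $\{a,b\}$ at distance $2$, the isometry-invariant set $M(a,b):=\{\mu\in\wpq\mid d_{W_p}(\mu,\delta_a)=d_{W_p}(\mu,\delta_b)=2\}$. Using Lemma \ref{side-measures} again, a short case check on which opposite sides can simultaneously host $\supp(\mu)$ shows that for a diagonal pair (such as $\{x_1,x_3\}$, lying on $L_+$) the set $M(a,b)$ consists of exactly the two remaining vertex Diracs, whereas for an adjacent pair (such as $\{x_1,x_2\}$) it contains every measure supported on the opposite side and is therefore infinite. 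Since $\Phi$ preserves $M(\cdot,\cdot)$, it maps diagonal vertex pairs to diagonal vertex pairs; hence $\sigma$ preserves the partition $\{\{x_1,x_3\},\{x_2,x_4\}\}$ of $D$ into diagonal pairs. The stabiliser of this partition in the symmetric group is precisely the dihedral group of order $8$, which coincides with the action on $D$ of the isometry group of $(Q,\dm)$. Thus $\sigma=S|_D$ for some isometry $S$ of $Q$, and setting $\Psi:=S^{-1}$ gives $\Phi\circ\Psi_{\#}(\delta_x)=\delta_x$ for all $x\in D$, as required.
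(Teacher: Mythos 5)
Your first step --- showing that any four measures pairwise at Wasserstein distance $2$ must be the four vertex Diracs, hence that $\Phi$ permutes $V$ --- is correct and is essentially the paper's own argument (the paper runs the same case analysis: if some $\Phi(\delta_{x_j})$ sat on a side but not at a vertex, the other three images would be forced onto the opposite side, where at most two measures can be pairwise at distance $2$). The genuine gap is in your second step. Your claim that for a diagonal pair, say $\{x_1,x_3\}=\{(-1,-1),(1,1)\}$, the set $M(x_1,x_3)$ ``consists of exactly the two remaining vertex Diracs'' is false. Since $d_{W_p}(\mu,\delta_a)=\bigl(\int_Q \dm^p(x,a)\dd\mu(x)\bigr)^{1/p}$, one has $d_{W_p}(\mu,\delta_{x_1})=d_{W_p}(\mu,\delta_{x_3})=2$ if and only if $\supp(\mu)\subseteq\{x:\dm(x,x_1)=2\}\cap\{x:\dm(x,x_3)=2\}=(S_3\cup S_4)\cap(S_1\cup S_2)=\{x_2,x_4\}$. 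Hence $M(x_1,x_3)=\{\alpha\delta_{x_2}+(1-\alpha)\delta_{x_4}\mid\alpha\in[0,1]\}$: every convex combination of the two remaining vertex Diracs lies in it (each such measure is at distance exactly $2$ from both $\delta_{x_1}$ and $\delta_{x_3}$, the unique coupling with a Dirac being the product coupling). So $M$ is infinite for diagonal pairs as well as for adjacent pairs, and your ``two elements versus infinitely many'' dichotomy collapses; as written, the argument does not rule out permutations of $V$ outside the dihedral group.

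The idea of separating diagonal from adjacent pairs by a metric invariant of $M(a,b)$ is salvageable, but it requires actual work that is missing: you would need to show that $M_{\mathrm{diag}}\cong\mathcal{W}_p(\{x_2,x_4\})$, which is an arc with distance $2|\alpha-\beta|^{1/p}$, is not isometric to $M_{\mathrm{adj}}\cong\mathcal{W}_p(S_4,\dm)\cong\mathcal{W}_p([-1,1],d_{|\cdot|})$ (for instance, the latter contains non-degenerate equilateral triples while the former does not), and this plausibly needs care for $p=1$ versus $p>1$. It is worth noting that the measures you overlooked, $\alpha^{*}\delta_{x_2}+(1-\alpha^{*})\delta_{x_4}$, are precisely the obstruction the paper's own proof must confront: after reducing to an isometry fixing $\delta_{x_1},\delta_{x_2}$ and assuming it swaps $\delta_{x_3},\delta_{x_4}$, the paper shows the image of a side-supported measure $\xi$ would have to be such a convex combination, and then derives a contradiction via saturation of the triangle inequality for $p>1$ and a separate non-degenerate-triangle argument for $p=1$. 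Some substitute for that final contradiction is exactly what your proposal is missing.
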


\begin{proof} Let us denote by $S_1, S_3$ the two vertical and by $S_2, S_4$ the two horizontal sides of $Q$ such that $S_1$ is the left vertical and $S_4$ is the top horizontal side. 

We apply Lemma \ref{side-measures} to every pair of elements of $V,$ and we conclude that for every $j \in \{1,2,3,4\},$ the measure $\Phi(\delta_{x_j})$ is supported on a certain side $S_i$ of $Q.$ 

We claim that each $\Phi(\delta_{x_j})$ must be in fact supported on some vertex. To see this we argue by contradiction. Let us assume that for example  $\Phi(\delta_{x_1})$ is supported on one of the sides, say $S_1$ but not on any of the vertices of $S_1$. 

It is clear by Lemma \ref{side-measures} that all other measures $\Phi(\delta_{x_i})$ for $i=2,3,4$ must be supported on the opposite side of $S_1$, that is $S_3.$ But the mutual distance of any pair of these measures must be equal to $2$ which shows that any two of these three measures must be lying on opposite sides again, which is $S_2$ and $S_4$. But there are only two possibilities for measures with support in $S_2\cap S_3$ and $S_4 \cap S_3$, namely the two Dirac masses on the vertices of $S_3$ which gives a contradiction. 

It is easy to see that there exists an isometry $\Psi: Q \to Q$ such that 
$\Phi \circ \Psi_{\#}$ fixes $\delta_{x_i}$ and $\delta_{x_{i+1}}$ for some $i \in \{1,2,3,4\},$ and therefore we can assume without loss of generality that $(\Phi \circ \Psi_{\#})(\delta_{x_i})=\delta_{x_i}$ for $i=1,2.$ This will imply that all measures supported on $S_4$ are fixed. We must show that $\Phi \circ \Psi_{\#} (\delta_{x_i}) = \delta_{x_i}$ for $i=3,4$. Note, that the map 
$$\Phi \circ \Psi_{\#}: \wpq \to \wpq$$ is itself an isometry. 
Let us assume indirectly that $\Phi \circ \Psi_{\#} (\delta_{x_3}) = \delta_{x_4}$ (which implies that and $\Phi \circ \Psi_{\#} (\delta_{x_4}) = \delta_{x_3}$), and take a $\xi$ such that $\supp(\xi)\subseteq S_3$, $\xi\notin\{\delta_{x_2},\delta_{x_3}\}$. Since $d_{W_p}(\delta_{x_1},\xi)=2$, we have 
$$
2=d_{W_p}(\delta_{x_1},\xi)=d_{W_p}(\Phi(\delta_{x_1}),\Phi(\xi))=d_{W_p}(\delta_{x_1},\Phi(\xi)),
$$
which implies that $\supp(\Phi(\xi))\subseteq S_3\cup S_4$.  Similarly, $d_{W_p}(\delta_{x_4},\xi)=2$, and thus 
$$
2=d_{W_p}(\delta_{x_4},\xi)=d_{W_p}(\Phi(\delta_{x_4}),\Phi(\xi))=d_{W_p}(\delta_{x_3},\Phi(\xi)),
$$ 
which implies that $\supp(\Phi(\xi))\subseteq S_1\cup S_2$. Combining $\supp(\Phi(\xi))\subseteq S_1\cup S_2$ and $\supp(\Phi(\xi))\subseteq S_3\cup S_4$ with $\xi\notin\{\delta_{x_2},\delta_{x_3}\}$, we get that $\Phi(\xi)=\alpha^{\ast}\delta_{x_2}+(1-\alpha^{\ast}) \delta_{x_4}$ for some $\alpha^{\ast}\in(0,1).$ (Recall that $\Phi(\delta_{x_2})=\delta_{x_2}$, $\Phi(\delta_{x_3})=\delta_{x_4}$, and $\Phi$ is injective.) If $p>1$, then this is a contradiction. Indeed, choose $\xi:=\delta_y$ with $y\in S_3\setminus\{x_2,x_3\}$, and observe that the triple $\delta_{x_2}$, $\delta_y$ and $\delta_{x_3}$ saturates the triangle inequality, but the triple $\Phi(\delta_{x_2})=\delta_{x_2}$, $\Phi(\delta_y)=\alpha^{\ast}\delta_{x_2}+(1-\alpha^{\ast}) \delta_{x_4}$ and $\Phi(\delta_{x_3})=\delta_{x_4}$ does not, as $\sqrt[p]{2^p(1-\alpha^{\ast})}+\sqrt[p]{2^p\alpha^{\ast}}\neq2$. If $p=1$, we need a different argument. The set $\mathcal{I}:=\{ \mu_{\alpha}:=\alpha\delta_{x_2}+(1-\alpha) \delta_{x_4}\,|\,\alpha\in(0,1)\}$ is isometric to the set $\big((0,1),2|\cdot|\big)$, since $d_{W_1}(\mu_{\alpha},\mu_{\beta})=|\alpha-\beta|$. Therefore for any three different elements $\mu_{\alpha_1}, \mu_{\alpha_3}, \mu_{\alpha_3}$ there exists a bijection $\sigma:\{1,2,3\}\to\{1,2,3\}$ such that $d_{W_1}(\mu_{\alpha_{\sigma(1)}},\mu_{\alpha_{\sigma(3)}})=(\mu_{\alpha_{\sigma(1)}},\mu_{\alpha_{\sigma(2)}})+(\mu_{\alpha_{\sigma(2)}},\mu_{\alpha_{\sigma(3)}})$.
Now choose a non-degenerate triangle $\xi_1,\xi_2,\xi_3$ supported on $S_3\setminus\{x_2,x_3\}$ in the sense that they do not saturate the triangle inequality in any order. The existence of such a triple is a contradiction, as an appropriate permutation of their image in $\mathcal{I}$ will saturate the triangle inequality.

\end{proof}

 From now on we shall assume without loss of generality that our isometry 
 $$\Phi: \wpq \to \wpq$$
 fixes the elements of $V$, i.e. the Dirac masses on the corners of $Q$. Note that this property implies by Lemma \ref{side-measures} that any measure supported on one of the sides of $Q$ will be mapped to a measure supported on the same side of $Q$.

In what follows we shall prove, even a stronger property for measures supported on the main diagonals 
 $$ L_+ = \{ (t,t): t\in [-1,1] \}, \ \textrm{and } \ L_- = \{ (t,-t): t\in [-1,1] \},$$
 namely, that they are fixed under the action of the isometry. This is valid for the case $p=1$.

\begin{theorem} \label{T: main-3}
The Wasserstein space $\woq$ is diagonally rigid.
\end{theorem}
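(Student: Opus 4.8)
The plan is to reduce the statement to the one-dimensional Wasserstein space over a single diagonal, and then to use the genuinely two-dimensional geometry of $Q$ to overcome the fact that this one-dimensional space is \emph{not} rigid. By Corollary \ref{corners} I may assume from the outset that $\Phi$ fixes the four vertex Diracs, $\Phi(\delta_{x_i})=\delta_{x_i}$ for $i=1,2,3,4$. The first step is a betweenness characterization of the diagonals. For every $z=(z_1,z_2)\in Q$ a direct inspection of the maximum metric gives $\dm(x_1,z)+\dm(z,x_3)=2+|z_1-z_2|\ge \dm(x_1,x_3)=2$, with equality precisely when $z\in L_+$. Integrating against $\mu$ shows that $\mathrm{supp}(\mu)\subseteq L_+$ if and only if $d_{W_1}(\delta_{x_1},\mu)+d_{W_1}(\mu,\delta_{x_3})=2$, and the analogous identity with $x_2,x_4$ characterizes $L_-$. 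Since $\Phi$ is a distance-preserving bijection fixing the vertices, it preserves these characterizing identities; applying the same reasoning to $\Phi^{-1}$ shows that $\Phi$ restricts to a bijective self-isometry of $\mathcal{W}_1(L_+,\dm)$ fixing the endpoint Diracs $\delta_{x_1},\delta_{x_3}$, and likewise of $\mathcal{W}_1(L_-,\dm)$ fixing $\delta_{x_2},\delta_{x_4}$.

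Second, I would identify $(L_+,\dm)$ isometrically with $([-1,1],d_{|\cdot|})$ via $t\mapsto (t,t)$, so that $\mathcal{W}_1(L_+,\dm)$ is isometrically isomorphic to $\mathcal{W}_1([-1,1],d_{|\cdot|})$. In contrast to all the cases handled earlier, this space is not rigid, so fixing the two endpoints does \emph{not} yet force the restriction to be the identity. Here I invoke the classification of the isometries of $\mathcal{W}_1([-1,1],d_{|\cdot|})$ from \cite{GTV1}: the isometries fixing both endpoint Diracs are exactly the identity together with one exotic involution, which in the present coordinates acts by $\delta_{(t,t)}\mapsto \tfrac{1-t}{2}\delta_{x_1}+\tfrac{1+t}{2}\delta_{x_3}$. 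Consequently $\Phi|_{\mathcal{W}_1(L_+,\dm)}$ is either the identity or this involution, and the whole theorem reduces to excluding the latter (and its analogue on $L_-$).

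Finally, the exotic involution is ruled out by testing against a vertex that does \emph{not} lie on $L_+$. If $\Phi$ acted on $L_+$ as the involution, then $\Phi(\delta_{(0,0)})=\tfrac12\delta_{x_1}+\tfrac12\delta_{x_3}$, while $\Phi(\delta_{x_2})=\delta_{x_2}$. But $d_{W_1}(\delta_{(0,0)},\delta_{x_2})=\dm((0,0),(1,-1))=1$, whereas, since the target is a single Dirac, $d_{W_1}\big(\tfrac12\delta_{x_1}+\tfrac12\delta_{x_3},\delta_{x_2}\big)=\tfrac12\dm(x_1,x_2)+\tfrac12\dm(x_3,x_2)=2$, contradicting that $\Phi$ preserves distances; the same test with $x_1$ or $x_3$ excludes the involution on $L_-$. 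Hence both restrictions are the identity, which is exactly diagonal rigidity. I expect this last step to be the conceptual heart and the main obstacle: because the diagonal in isolation behaves like the non-rigid space $\mathcal{W}_1([-1,1],d_{|\cdot|})$, rigidity cannot be read off from the diagonal alone and must come from the ambient square. The mechanism is that the exotic involution spreads an interior diagonal Dirac onto the two endpoints $x_1,x_3$, which lie at the maximal distance $2$ from the opposite vertices $x_2,x_4$ — an effect that is invisible within $L_+$ but is detected precisely by the off-diagonal vertices.
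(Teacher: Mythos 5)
Your proposal is correct and follows essentially the same route as the paper: reduce to the corners via Corollary \ref{corners}, characterize diagonal support by saturation of the triangle inequality $d_{W_1}(\delta_{x_1},\mu)+d_{W_1}(\mu,\delta_{x_3})=2$, and then deal with the non-rigidity of $\mathcal{W}_1([-1,1],d_{|\cdot|})$ using information from outside the diagonal. The only (minor) divergence is in the last step: the paper pins down $\Phi(\delta_{(0,0)})=\delta_{(0,0)}$ as the unique measure supported on $L_+\cap L_-$ and then invokes the GTV1 characterization, whereas you exclude the exotic involution directly by comparing distances to the off-diagonal vertex Dirac $\delta_{x_2}$ — both arguments exploit the ambient square in the same spirit and are equally valid.
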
  
\begin{proof} By Corollary \ref{corners}, we can assume without loss of generality that $\Phi$ fixes the Dirac masses at the four corners of $Q$. It is enough to show that if  $\mu \in \woq$ supported on  $L_+ $, then $\Phi(\mu)=\mu$ as the case of $L_{-}$ is similar.
We show first that if $\supp(\mu) \subseteq L_+$, then $\supp\big(\Phi(\mu)\big) \subseteq L_+$.
This is based on the following observation: If $x \in Q $ then 
\begin{equation} \label{diagonal-equ}
\dm((-1,-1), x ) + \dm(x, (1,1))\geq \dm((-1,-1), (1,1)) = 2 
\end{equation}
with equality if and only if $x \in L_+$.

The above inequality follows simply by the triangle inequality applied for the metric $\dm$. The characterization of the equality case is slightly more tricky. It is based on the fact that the line segment $ L_+: t\to (t,t), t\in [-1,1] $ is the only geodesic with respect to the metric $\dm$ connecting the endpoints $(-1,-1)$ and $(1,1)$. 
This observation has the following consequence for measures: If $\mu \in \woq$, then 
\begin{equation} \label{diagonal-ineq-measu}
d_{W_1}(\delta_{(-1,-1)}, \mu) + d_{W_1}(\mu, \delta_{(1,1)}) \geq 2,
\end{equation}
with equality if and only if $\supp(\mu) \subseteq L_+$. 
To show inequality \eqref{diagonal-ineq-measu} we integrate inequality  \eqref{diagonal-equ} with respect to $\mu$. In this way we obtain
\begin{align} \label{diagonal-int}
\begin{split}
   d_{W_1}(\delta_{(-1,-1)}, \mu) + d_{W_1}(\mu, \delta_{(1,1)})&= \int\limits_{Q}\dm((-1,-1), x )~\mathrm{d}\mu(x) + \int\limits_{Q} \dm(x, (1,1))  ~\mathrm{d}\mu(x)\\   &\geq \int\limits_Q 2~\mathrm{d}\mu(x)  = 2 .
\end{split}
\end{align}
Let us assume that $\supp(\mu) \subseteq L_+$. Then equality holds true in \eqref{diagonal-equ} for every point $x\in\supp (\mu)$ and thus by integrating, we obtain that equality holds in \eqref{diagonal-int} as well. 

Conversely, let us assume, that 
$$d_{W_1}( \delta_{(-1,-1)},\mu) + d_{W_1}(\mu, \delta_{(1,1)}) = 2$$ for some measure $\mu \in \woq$. We have to show that $\supp(\mu) \subseteq L_+$. We argue by contradiction: assume that the exists a point $x_0 \in\supp( \mu)$ that is not contained in $L_+$. Then there exists a small radius $r>0$ and a small $\varepsilon >0$ with the property that $\delta = \mu(B(x_0, r)>0 $ and 
$$ \dm((-1,-1), x) + \dm(x,(1,1)) >2 +\varepsilon , \ \textrm{for all} \ x \in B(x_0,r).$$
Using this relation we obtain 
\begin{align*}
 &d_{W_1}( \delta_{(-1,-1)},\mu) + d_{W_1}(\mu, \delta_{(1,1)}) =  \\&  = \int\limits_{\supp(\mu)}\dm((-1,-1), x )\ ~\mathrm{d}\mu(x) + \int\limits_{\supp(\mu)} \dm(x,(1,1)) \ ~\mathrm{d}\mu(x) = \\ & =  \int\limits_{\supp(\mu)}[\dm((-1,-1), x ) +  \dm(x, (1,1))] \  ~\mathrm{d}\mu(x) = \\ & = \int\limits_{\supp(\mu) \cap B(x_0,r)}[\dm((-1,-1), x ) +  \dm(x,(1,1)) ]\  ~\mathrm{d}\mu(x) + \\ & \hspace{1.5cm}+ \int\limits_{\supp(\mu) \setminus B(x_0,r)}[\dm((-1,-1), x ) +  \dm(x,(1,1) ) ] \  ~\mathrm{d}\mu(x)\\
 &> (2+\varepsilon) \delta + 2(1-\delta)= 2+\varepsilon\delta >2, 
\end{align*}
which is a contradiction. 
Let us consider an isometry $\Phi: \woq \to \woq$ that 
 fixes the elements of $V$, (i.e. the Dirac masses on the corners of $Q$). Then we have 
$$d_{W_1}(\delta_{(-1,-1)},\Phi(\mu))+ d_{W_1}(\Phi(\mu),\delta_{(1,1)}) = d_{W_1}(\delta_{(-1,-1)},\mu))+ d_{W_1}(\mu,\delta_{(1,1)}).$$
Assuming that $\supp(\mu) \subseteq L_+$ we obtain that by the above that 
$$ d_{W_1}(\delta_{(-1,-1)},\mu))+ d_{W_1}(\mu,\delta_{(1,1)})=2.$$ By the above equality we have, then 
 $$d_{W_1}(\delta_{(-1,-1)},\Phi(\mu))+ d_{W_1}(\Phi(\mu),\delta_{(1,1)})=2,$$
 which implies in turn that $\supp\big(\Phi(\mu)\big) \subseteq L_+$.

As mentioned at the beginning of the proof, the same argument shows that if we have $\supp(\mu) \subseteq L_-$, then it follows that 
$\supp\big(\Phi(\mu)\big) \subseteq L_-$ as well. Since $\mu_0=\delta_{(0,0)}$ has its support in $L_+ \cap L_-$, and it is the unique measure with this property,  we conclude, that $\Phi(\mu_0) = \mu_0$. 
Now let us consider the Wasserstein space $\mathcal{W}_1(L_+,\dm)$ of measures supported on $L_+$. By the above consideration, we have 
$$ \Phi:\mathcal{W}_1(L_+,\dm) \to \mathcal{W}_1(L_+,\dm), $$
moreover, we know that $\Phi(\mu_0) = \mu_0$ for the measure $\mu_0=\delta_{(0,0)}\in \mathcal{W}_1(L_+,\dm) $. We cannot apply the characterization of Wasserstein isometries on a line segment (see \cite[Theorem 2.5]{GTV1}) which implies that $\Phi(\mu) = \mu$ for all measures $\mu \in \mathcal{W}_1(L_+,\dm) $. In the same way we can also conclude, that $\Phi(\mu) = \mu$ for all measures $\mu \in \mathcal{W}_1(L_-,\dm) $.

\end{proof}

The combination of Proposition \ref{Diagonal-Rigidity} and Theorem \ref{T: main-3} implies that $\woq$ is isometrically rigid.

\subsection{Diagonal rigidity of $\wpq$ for $p>1$}
In this final subsection, we consider the case $X= Q= [-1,1]^2$ and  $p>1$. In this case, we also have the statement:

\begin{theorem}\label{T: main-4}
    The Wasserstein space $\wpq$ is diagonally rigid.

\end{theorem}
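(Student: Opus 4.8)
The plan is to follow the strategy of Theorem~\ref{T: main-2}, but to exploit the boundedness of $Q$, which actually shortens the argument. By Corollary~\ref{corners} we may assume without loss of generality that $\Phi$ fixes each of the four corner Dirac masses $\delta_{(-1,-1)}$, $\delta_{(1,-1)}$, $\delta_{(1,1)}$, $\delta_{(-1,1)}$. It then suffices to prove that $\Phi(\mu)=\mu$ whenever $\supp(\mu)\subseteq L_+$, since the case $\supp(\mu)\subseteq L_-$ is completely analogous.

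First I would record the geodesic structure of $(Q,\dm)$, which is compact and hence a complete, separable, locally compact length space. Since a path in $Q$ is also a path in $\R^2$ of the same length, the geodesics of $(Q,\dm)$ are exactly those planar $\dm$-geodesics that lie inside $Q$; in particular, two points of $Q$ are joined by a \emph{unique} geodesic if and only if they are diagonally aligned, because for aligned points the straight segment is the unique planar geodesic and it stays in $Q$, whereas for non-aligned points one can keep at least two distinct staircase geodesics (e.g. the two $L$-shaped paths) inside the convex set $Q$. The decisive observation is that, within $Q$, the only points diagonally aligned with the corner $(-1,-1)$ form exactly the segment $L_+$: the $L_-$-type line through $(-1,-1)$ is $\{x_2=-x_1-2\}$, which meets $Q$ only at the corner itself. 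Hence the argument of Proposition~\ref{prop:Dirac-and-mu}, which invokes Corollaries 7.22 and 7.23 of \cite{Villani} together with the fact that the optimal plan from a Dirac mass is automatically unique, applies with $x=(-1,-1)$ and yields the metric characterization: $\supp(\mu)\subseteq L_+$ if and only if there is a unique unit-speed geodesic joining $\delta_{(-1,-1)}$ and $\mu$.

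Since $\Phi$ is an isometry it preserves the property of being joined by a unique geodesic, and $\Phi(\delta_{(-1,-1)})=\delta_{(-1,-1)}$ by our normalization; therefore $\supp(\mu)\subseteq L_+$ forces $\supp(\Phi(\mu))\subseteq L_+$, so $\Phi$ restricts to a bijective isometry of $\mathcal{W}_p(L_+,\dm)$. As $(L_+,\dm)$ is isometric to $([-1,1],d_{|\cdot|})$, this restricted space is $\mathcal{W}_p([-1,1],d_{|\cdot|})$, which is isometrically rigid for every $p>1$ (see \cite{GTV1} and the discussion in the introduction). Consequently $\Phi|_{\mathcal{W}_p(L_+,\dm)}$ is induced by an isometry of $[-1,1]$; but $\Phi$ fixes the two endpoint Diracs $\delta_{(-1,-1)}$ and $\delta_{(1,1)}$, so the underlying isometry of $[-1,1]$ fixes both endpoints and must be the identity. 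Thus $\Phi(\mu)=\mu$ for every $\mu$ supported on $L_+$. Running the identical argument at the corner $(1,-1)$, whose only diagonally aligned points in $Q$ form $L_-$ (with endpoints $(1,-1)$ and $(-1,1)$, both fixed), disposes of $L_-$ and completes the proof.

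The main point to get right is the geodesic analysis inside $Q$: one must verify both that Villani's length-space hypotheses genuinely hold for the compact space $(Q,\dm)$ and that restricting to a corner collapses the diagonally aligned set to a single diagonal segment. Once this is in place, the difficulty that plagued the planar case $p=2$ evaporates: unlike $\mathcal{W}_2(\R,d_{|\cdot|})$, the space $\mathcal{W}_2([-1,1],d_{|\cdot|})$ is rigid, so no separate treatment of $p=2$ via Kloeckner's classification is required here, and a single uniform argument covers all $p>1$.
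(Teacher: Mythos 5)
Your argument is correct, but it takes a genuinely different route from the paper's. The paper first upgrades Corollary \ref{corners} to the statement that \emph{all} measures on the right and top sides are fixed (using rigidity of $\mathcal{W}_p$ over an interval), and then characterizes a measure $\mu$ on the half-diagonal $[(0,0),(1,1)]$ as the \emph{unique minimizer} of $\nu \mapsto d_{W_p}^p(\mu_r,\nu)+d_{W_p}^p(\nu,\mu_u)$, where $\mu_r,\mu_u$ are the push-forwards of $\mu$ onto those two sides; this requires a gluing argument, the strict convexity of $t\mapsto |t|^p$, and an identification of the unique optimal plan between $\mu_r$ and $\mu_u$, followed by a separate geodesic-extension step to pass from the half-diagonal to all of $[(-1,-1),(1,1)]$. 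You instead transplant the unique-geodesic criterion of Proposition \ref{prop:Dirac-and-mu} into $(Q,\dm)$ and exploit the genuinely new observation that at a corner the diagonal cone $D_x\cap Q$ collapses to a single segment, so the single fixed Dirac $\delta_{(-1,-1)}$ already forces $\Phi$ to preserve $\mathcal{W}_p(L_+,\dm)$; rigidity of $\mathcal{W}_p([-1,1],d_{|\cdot|})$ for all $p>1$ (including $p=2$, which is exactly where the planar argument needed Kloeckner's classification) together with the two fixed endpoint Diracs then finishes. Your route is shorter and makes transparent why compactness removes the $p=2$ pathology; the paper's route is more self-contained in that it never has to re-derive the geodesic structure of the subspace $(Q,\dm)$ or re-justify Villani's Corollaries 7.22--7.23 there. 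Two small points you should tighten: the phrase ``two $L$-shaped paths'' is misleading for the max metric (axis-parallel $L$-paths are generally not $\dm$-geodesics); the correct reason non-aligned points of $Q$ admit several geodesics is that their $\dm$-midpoint set is a nondegenerate vertical or horizontal segment whose intersection with the convex set $Q$ is still nondegenerate, from which two distinct displacement interpolations from $\delta_{(-1,-1)}$ can be built by a measurable choice of midpoints. And you should note explicitly that $\Phi^{-1}$ satisfies the same hypotheses, so the restriction of $\Phi$ to $\mathcal{W}_p(L_+,\dm)$ is surjective before the rigidity theorem for the interval is invoked. Neither is a gap; both are one-line repairs.
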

\begin{proof}
Let $\Phi$ be an isometry of the Wasserstein space $\wpq$. Since the Wasserstein space 
$\mathcal{W}_p([-1,1],d_{|\cdot|})$
is isometrically rigid, Corollary \ref{corners} and the remark after that, implies that we can assume without loss of generality that $\Phi$ leaves every measure supported either on the right vertical side $[(1,-1),(1,1)]$ or on the top horizontal side $[(-1,1),(1,1)]$ fixed. We are going to show that $\Phi$ leaves every measure supported on the line diagonal segment $[(-1,-1),(1,1)] \subset Q$ fixed. The case of the other diagonal is analogous.

In the first step, we consider measures supported on the upper half of the diagonal, i.e. on the line segment $[(0,0),(1,1)]$. We prove first that these measures will be fixed. In the second step we proceed to consider general measures supported on the full segment $[(-1,-1),(1,1)]$. 

Let $\mu$ be a measure supported on the upper half of the diagonal of $Q$ (that is, on $[(0,0),(1,1)]$), and consider the projections $p_r: \, (t,t) \mapsto (1, 2t-1)$ and $p_u: (t,t) \mapsto (2t-1,1)$ that map $[(0,0),(1,1)]$ onto the right vertical side $[(1,-1),(1,1)]$ and the top horizontal side $[(-1,1),(1,1)],$ respectively. Let us introduce the notation 
$\mu_r:=(p_r)_{\#}(\mu)\quad\mbox{and}$ and $\mu_u:=(p_u)_{\#}(\mu)$.
\begin{figure}[H]
\centering
\includegraphics[width=0.8\textwidth]{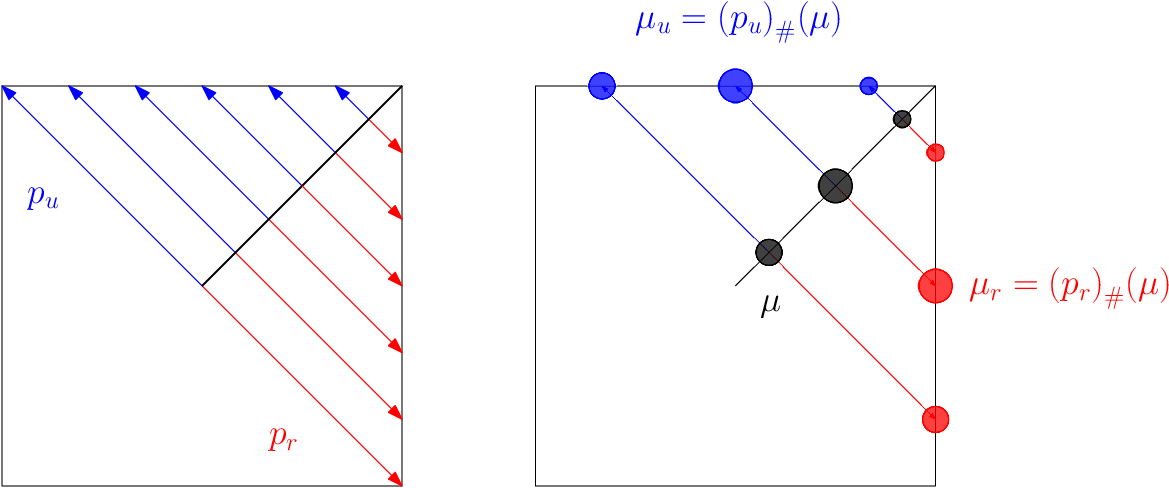}
\caption{Illustration for the definition of $p_u$, $p_r$, $\mu_u$ and $\mu_r$.}
\end{figure}

\par
Our goal is to show that $\mu$ is the unique minimizer of the functional
\begin{equation} \label{eq:p-power-error-functional}
\wpq \ni \nu \mapsto d_{W_p}^p(\mu_r, \nu)+d_{W_p}^p(\nu, \mu_u).    
\end{equation}
Once this is proved, we are done, as in this case the equation
\begin{equation}
\begin{split}
d_{W_p}^p(\mu_r, \Phi(\mu))+d_{W_p}^p(\Phi(\mu), \mu_u)
&=d_{W_p}^p(\Phi(\mu_r), \Phi(\mu))+d_{W_p}^p(\Phi(\mu), \Phi(\mu_u))\\
&=d_{W_p}^p(\mu_r,\mu)+d_{W_p}^p(\mu, \mu_u)\\
&=\min_{\nu \in \wpq}(d_{W_p}^p(\mu_r, \nu)+d_{W_p}^p(\nu, \mu_u))
\end{split}
\end{equation}
forces $\Phi(\mu)$ to be $\mu.$\\

Let $\nu \in \wpq$ be arbitrary, let $\pi_{(\mu_r,\nu)}^*$ be an optimal transport plan between $\mu_r$ and $\nu$ with respect to the cost $c(x,y)=\dm^p(x,y)$, let $\pi_{(\nu,\mu_u)}^*$ be an optimal transport plan between $\nu$ and $\mu_u$ with respect to the same cost, and let $\pi_{(\mu_r,\nu,\mu_u)}^* \in \mathrm{Prob}(Q^3)$ be the \emph{gluing} of them --- see the ``gluing lemma" \cite[Lemma 7.6]{V} for further details of this construction. Moreover, set $\pi_{(\mu_r,\mu_u)}^*:=\left( \pi_{(\mu_r,\nu,\mu_u)}^* \right)_{13}.$ Now
\begin{equation}\label{eq:1st-estimate}
\begin{split}
d_{W_p}^p(\mu_r, \nu)+d_{W_p}^p(\nu, \mu_u)
&=\iint_{Q^2}\dm^p(x,y) \dd \pi_{(\mu_r,\nu)}^*(x,y)
+\iint_{Q^2}\dm^p(y,z) \dd \pi_{(\nu,\mu_u)}^*(y,z)\\
&=\iiint_{Q^3} (\dm^p(x,y)+\dm^p(y,z)) \dd \pi_{(\mu_r,\nu,\mu_u)}^*(x,y,z)\\
&\geq \iiint_{Q^3} \min_{y \in Q}\{\dm^p(x,y)+\dm^p(y,z)\} \dd \pi_{(\mu_r,\nu,\mu_u)}^*(x,y,z)\\
&= \iint_{Q^2} \min_{y \in Q}\{\dm^p(x,y)+\dm^p(y,z)\} \dd \pi_{(\mu_r,\mu_u)}^*(x,z).
\end{split}
\end{equation}
Let us compute $\min_{y \in Q}\{\dm^p(x,y)+\dm^p(y,z)\}$ for any $(x,z) \in Q^2$ --- the case $x=z$ gives a trivial zero. By the reversed triangle inequality, we have
$$
\dm^p(x,y) \geq |\dm(x,z)-\dm(y,z)|^p=\dm^p(x,z)\left| 1 -\frac{\dm(y,z)}{\dm(x,z)}\right|^p.
$$
Consequently,
$$
\dm^p(x,y)+\dm^p(y,z) \geq \dm^p(x,z) \left( \left| 1 -\frac{\dm(y,z)}{\dm(x,z)}\right|^p + \left| \frac{\dm(y,z)}{\dm(x,z)}\right|^p\right).
$$
It is crucial that $p>1$ and hence the map $t \mapsto |t|^p$ is strictly convex on $\R.$ Therefore, the function $\R \ni \lambda \mapsto |1-\lambda|^p+|\lambda|^p$ has a unique minimizer which is $\lambda_0=\frac{1}{2},$ and the minimum is $2^{1-p}$ --- this can be justified by simple one-variable calculus. To sum up,
$$
\min_{y \in Q}\{\dm^p(x,y)+\dm^p(y,z)\}=2^{1-p} \dm^p(x,z),
$$
and this minimum is achieved if and only if $\dm(x,y)=\dm(y,z)=\frac{1}{2}\dm(x,z)$ --- note that this does not imply that $y=\frac{1}{2}(x+z).$ So we can continue \eqref{eq:1st-estimate} as follows:
\begin{equation}\label{eq:2nd-estimate}
\begin{split}
\iint_{Q^2} \min_{y \in Q}\{\dm^p(x,y)+\dm^p(y,z)\} \dd \pi_{(\mu_r,\mu_u)}^*(x,z)
&=2^{1-p} \iint_{Q^2} \dm^p(x,z) \dd \pi_{(\mu_r,\mu_u)}^*(x,z)\\ 
&\geq 2^{1-p} d_{W_p}^p(\mu_r,\mu_u).
\end{split}
\end{equation}
The inequality in \eqref{eq:2nd-estimate} is saturated if and only if $\pi_{(\mu_r,\mu_u)}^*$ is an \emph{optimal} transport plan between $\mu_r$ and $\mu_u$ with respect to the cost $c(x,z)=\dm^p(x,z).$ Observe, that $(p_r \times p_u)_{\#}(\mu)$ is the unique optimal transport plan between $\mu_r$ and $\mu_u$ for this cost. Indeed, by the definition of the max norm we get
\begin{equation} \label{eq:trivi-1}
(1-(2t-1))^p \leq \dm^p((1, 2t-1),(2s-1,1))
\end{equation}
and  
\begin{equation} \label{eq:trivi-2}
(1-(2s-1))^p \leq \dm^p((1, 2t-1),(2s-1,1))
\end{equation}
for all $(s,t) \in [0,1] \times [0,1].$
By the construction of $\mu_r$ and $\mu_u,$ the couplings of these measures are in $1-1$ correspondence with the couplings of $\mu$ with itself --- now we consider $\mu$ as a measure on $[0,1].$ This correspondence is described as follows: if $\pi \in C(\mu_r,\mu_u),$ then let us define $\tilde{\pi} \in C(\mu,\mu)$ by $\tilde{\pi}:=(p_r^{-1} \times p_u^{-1})_{\#} (\pi).$ Then
\begin{equation}\label{eq:push-1}
\begin{split}
\iint_{Q^2}\dm^p(x,z) \dd \pi(x,z)&=\iint_{[0,1]^2} \dm^p((1, 2t-1),(2s-1,1)) \dd \tilde{\pi}(t,s).\\
&\geq \iint_{[0,1]^2} (1-(2t-1))^p \dd \tilde{\pi}(t,s)\\
&=\int\limits_{[0,1]} (1-(2t-1))^p \dd \mu(t)
\end{split}
\end{equation}
and \eqref{eq:push-1} is saturated if and only if $s \geq t$ for $\tilde{\pi}-$a.e. $(s,t).$ Using \eqref{eq:trivi-2} we get
$$
\iint_{Q^2}\dm^p(x,z) \dd \pi(x,z) \geq \int\limits_{[0,1]} (1-(2s-1))^p \dd \mu(s)
$$
which is saturated if and only if $t \geq s$ for $\tilde{\pi}-$a.e. $(s,t).$ Therefore, if $\pi$ is an optimal coupling of $\mu_r$ and $\mu_u,$ then $\tilde{\pi}$ is supported on the diagonal of $[0,1]^2$ which implies that $\tilde{\pi}=(id \times id)_{\#}(\mu)$. This means that $\pi= (p_r \times p_u)_{\#}(\mu).$ So $(p_r \times p_u)_{\#}(\mu)$ is the only optimal transport plan.
\par
At this point we know by \eqref{eq:1st-estimate} and \eqref{eq:2nd-estimate} that 
\begin{equation} \label{eq:est-final}
d_{W_p}^p(\mu_r, \nu)+d_{W_p}^p(\nu, \mu_u) \geq 2^{1-p} d_{W_p}^p(\mu_r,\mu_u),
\end{equation}
and \eqref{eq:est-final} is saturated if and only if
$\pi_{(\mu_r,\mu_u)}^*=(p_r \times p_u)_{\#}(\mu)$ and $\dm(x,y)=\dm(y,z)=\frac{1}{2}\dm(x,z)$ for $\pi_{(\mu_r, \nu,\mu_u)}^*-$a.e. $(x,y,z) \in Q^3.$
Therefore, equality in \eqref{eq:est-final} implies that
$$
\mathrm{supp}(\pi_{(\mu_r,\nu,\mu_u)}^*)
\subset \left\{((1, 2t-1),y,(2t-1,1)) \, \middle| \, t \in [0,1], \, y \in Q\right\}.
$$
However, the unique metric midpoint of $(1, 2t-1)$ and $(2t-1,1)$ is $(t,t).$ Therefore, $y=\frac{1}{2}(x+z)$ must hold for $\pi_{(\mu_r,\nu,\mu_u)}^*$-a.e. $(x,y,z) \in Q^3,$ which forces $\nu$ to be 
$$
\left((x,z) \mapsto \frac{1}{2}(x+z) \right)_{\#} \pi_{(\mu_r,\mu_u)}^*
=\left((x,z) \mapsto \frac{1}{2}(x+z) \right)_{\#} (p_r \times p_u)_{\#}(\mu)= \mu.
$$

Now let us consider $\mu$, a probability measure supported on the main diagonal $[(-1,-1),(1,1)].$ The displacement interpolation given by
$$
\mu_s:=\left((x,x) \mapsto (1-s)(1,1)+s(x,x)\right)_{\#}(\mu) \qquad (s \in [0,1]) 
$$
is the unique geodesic line segment between $\mu_0=\delta_{(1,1)}$ and $\mu_1=\mu.$ Note that $\mu_{\frac{1}{2}}$ is supported on the ``upper half of the diagonal" $[(0,0),(1,1)]$ and hence preserved by $\Phi.$ Moreover, $\left(\mu_s\right)_{0\leq s\leq \frac{1}{2}}$ is the only geodesic line segment between $\delta_{(1,1)}$ and $\mu_{\frac{1}{2}},$ and the unique extension of this geodesic segment to the parameter domain $[0,1]$ is $\left(\mu_s\right)_{0\leq s\leq 1}.$ Therefore, the geodesic line segment $\left(\phi(\mu_s)\right)_{0\leq s\leq 1}$ containing $\Phi(\mu_0)=\Phi(\delta_{(1,1)})=\delta_{(1,1)}$ and $\Phi(\mu_{\frac{1}{2}})=\mu_{\frac{1}{2}}$ must coincide with $\left(\mu_s\right)_{0\leq s\leq 1},$ in particular, $\Phi(\mu)=\Phi(\mu_1)=\mu_1=\mu.$
\end{proof}

The combination of Proposition \ref{Diagonal-Rigidity} and the above theorem implies that the Wasserstein space $\wpq$ is isometrically rigid for $p>1$.

\section{Final remarks and open questions}

We think that the ideas developed in this paper can be used to prove isometric rigidity of Wasserstein spaces that are built over certain normed spaces. Recent studies of the structure of Wasserstein isometries feature interesting examples of both rigid and non-rigid Wasserstein spaces. Kloeckner showed in \cite{K} that the quadratic Wasserstein space over $\mathbb{R}^n$ admits non-trivial isometries. As a recent result of Che, Galaz-García, Kerin, and Santos-Rodríguez demonstrates \cite{S-R2}, non-trivial isometries show up even if the underlying normed space $X$ can be written as $H\times Y$, where $H$ is a Hilbert space and $Y$ is a finite-dimensional normed space. Furthermore, by an application of a recent result of Balogh, St\"oher, Titkos and Virosztek (see Theorem 1.1. in \cite{BSTV}) it follows that the Wasserstein space $\mathcal{W}_1(Q, d_1)$, (where $d_1$ is the $\ell_1$- metric on $Q$) is non-rigid as it contains mass-splitting isometries. This result is in sharp contrast to our main result, Theorem \ref{T: main}.  On the other hand, non-quadratic Wasserstein spaces over Hilbert spaces \cite{GTV1,GTV2}, and Wasserstein spaces over spheres, tori, and Heisenberg groups \cite{BTV,TnSn} turned out to be rigid in the last years. Based on these recent developments, it would be an intricate question to study the problem of rigidity of quadratic and non-quadratic Wasserstein spaces over general normed spaces.\\

The question of rigidity of quadratic Wasserstein spaces have been investigated for various underlying spaces including Hadamard spaces \cite{BK}, \cite{BK2} and more general metric spaces with negative curvature in the sense of Alexandrov. The case of positive sectional curvature has been considered by Santos-Rodriguez \cite{S-R}. It would be an interesting question to study the problem of rigidity in more general metric measure spaces satisfying a curvature-dimension condition, the so-called $CD(K,N)$ (see \cite{LV} and \cite{Villani}) for $K>0$  Note, that $CD(K,N)$ spaces have a generalized lower bound on the Ricci curvature and therefore are more general objects that the spaces considered in \cite{S-R}. 
\\
\paragraph*{{\bf Acknowledgments}} The authors wish to express their gratitude to Eric Str\"oher for pointing out a gap in one of our proofs and to the anonymous referee for numerous comments improving both content and presentation of the manuscript.
\\ 
\paragraph*{{\bf Declaration}} The authors declare that they have no competing interests.


\begin{thebibliography}{9}
\bibitem{AG}
{L. Ambrosio, N. Gigli}, 
{A user’s guide to optimal transport. Modelling and optimisation of
flows on networks}, 
\newblock{\textit{Lecture Notes in Math.}, 2062, Fond. CIME/CIME Found. Subser., Springer, Heidelberg, 2013.}

\bibitem{AGS}  {L. Ambrosio, N. Gigli, G. Savar\'e}, 
{Gradient Flows in Metric Spaces and in the Space of Probability Measures}, 
\newblock{\textit{Lectures in Mathematics, ETH Z\"urich}, Birkh\"auser Verlag, 2008}
 
\bibitem{AP} L. Ambrosio, A. Pratelli, Existence and stability results in the $L^1$ theory of optimal transportation, \textit{Lecture Notes in Mathematics "Optimal transportation and applications''} (CIME Series, Martina Franca, 2001) {1813}, L.A. Caffarelli and S. Salsa Eds., 123--160, 2003.

\bibitem{BSTV} Z. M. Balogh, E. Str\"oher,  T. Titkos, D. Virosztek, Rigid and flexible Wasserstein spaces, https://arxiv.org/abs/2502.09364 (2025).

\bibitem{BTV} Z. M. Balogh, T. Titkos, D. Virosztek, Isometries and isometric embeddings of Wasserstein spaces over the Heisenberg group, https://arxiv.org/abs/2303.15095 (2023). 

\bibitem{BK} J. Bertrand, B. Kloeckner, A geometric study of Wasserstein spaces: isometric rigidity in negative curvature,
\textit{Int. Math. Res. Notices}  5 (2016), 1368--1386.

\bibitem{BK2}
J. Bertrand, B. Kloeckner, A geometric study of Wasserstein spaces: Hadamard spaces, \textit{J. Top. Anal.} Vol. 04 (2012), 515--542.

\bibitem{S-R2}
M. Che, F. Galaz-Garc\'ia, M. Kerin, and J. Santos-Rodr\'iguez, Isometric rigidity of metric constructions with respect to Wasserstein spaces, arXiv manuscript, arXiv:2410.14648, 2024.

\bibitem{Figalli-Glaudo}
A. Figalli, F. Glaudo,
An Invitation to Optimal Transport, Wasserstein Distances, and Gradient Flows,
\textit{EMS Textbooks in Mathematics}, Volume 23, 2021.

\bibitem{GTV1} Gy. P. Geh\'er, T. Titkos, D. Virosztek, Isometric study of Wasserstein spaces-the real line, \textit{Trans. Amer. Math. Soc.} 373 (2020), 5855--5883
			
\bibitem{GTV2} Gy. P. Geh\'er, T. Titkos, D. Virosztek, Isometry group of Wasserstein spaces: the Hilbertian case, \textit{J. London Math. Soc.}  106 (2022), 3865--3894.


\bibitem{TnSn} 
{Gy. P. Geh\'er, T. Titkos, D. Virosztek},
{Isometric rigidity of Wasserstein tori and spheres}, 
\textit{Mathematika} 69 (2023), 20--32.


\bibitem{K} B. Kloeckner,  A geometric study of Wasserstein spaces: Euclidean spaces. \textit{Ann. Sc. Norm. Super. Pisa Cl. Sci.} 5 (2010), 297--323. 


\bibitem{LV}  J. Lott, C. Villani, Ricci curvature for metric measure spaces via optimal transport. \textit{Ann. of Math.}  169 (2009), 903--991.

\bibitem{S-R}
J. Santos-Rodr\'iguez, 
On isometries of compact $L^p$–Wasserstein spaces, 
\textit{Adv. Math.} 409 (2022), Article No. 108632.


\bibitem{V} C. Villani, Topics in Optimal Transportation, \textit{Graduate Studies in Mathematics.} vol. 58 of, Amer. Math. Soc. Providence, RI, 2003. 		
		
\bibitem{Villani} 
C. Villani, Optimal Transport: Old and New,
\textit{Grundlehren der mathematischen Wissenschaften} 338, Springer-Verlag Berlin Heidelberg, 2009.

\end{thebibliography}
\end{document}